\def\barr{\begin{array}}
\def\earr{\end{array}}
\def\bali{\begin{aligned}}
\def\eali{\end{aligned}}
\def\bearr{\begin{eqnarray}}
\def\eearr{\end{eqnarray}}
\providecommand{\play}{\displaystyle}
\providecommand{\li}{\limits}
\providecommand{\pt}{\partial}
\providecommand{\ra}{\rightarrow}
\providecommand{\da}{\downarrow}
\providecommand{\E}{\mathbb E}
\providecommand{\al}{\alpha}
\providecommand{\bt}{\beta}
\providecommand{\Gm}{\Gamma}
\providecommand{\dt}{\delta}
\providecommand{\Dt}{\Delta}
\providecommand{\ve}{\varepsilon}
\providecommand{\kp}{\kappa}
\providecommand{\lb}{\lambda}
\providecommand{\sm}{\sigma}
\providecommand{\R}{\mathbb R}
\providecommand{\T}{\mathbb T}
\providecommand{\cA}{\mathcal A}
\providecommand{\cB}{\mathcal B}
\providecommand{\cC}{\mathcal C}
\providecommand{\cD}{\mathcal D}
\providecommand{\cF}{\mathcal F}
\providecommand{\cL}{\mathcal L}
\providecommand{\cV}{\mathcal V}
\providecommand{\cY}{\mathcal Y}
\providecommand{\cZ}{\mathcal Z}
\providecommand{\Xbar}{\bar{X}}
\providecommand{\1}{\mathbf 1}
\providecommand{\grad}{\nabla}
\providecommand{\Xbar}{\overline{X}}
\providecommand{\qbar}{\overline{q}}
\providecommand{\pbar}{\overline{p}}
\providecommand{\occupation}{\mathrm{P}}
\newtheorem{theorem}{Theorem}[section]
\newtheorem{definition}{Definition}[section]
\newtheorem{remark}[theorem]{Remark}
\newtheorem{lemma}[theorem]{Lemma}
\newtheorem{condition}{Condition}[section]
\begin{document}

%\title{Large deviations for hypoelliptic multiscale Langevin diffusions and small mass asypototics}
\title{Hypoelliptic multiscale Langevin diffusions: Large deviations, invariant measures and small mass asymptotics}

\author{Wenqing Hu}
\address{School of Mathematics\\
University of Minnesota, Twin Cities \\ 302 Vincent Hall,
Minneapolis, MN 55414.} \email{huxxx758@umn.edu }
\author{Konstantinos Spiliopoulos}
\thanks{Research of K.S. supported in part by the National Science Foundation (DMS 1312124 and DMS 1550918).}
\address{Department of Mathematics and Statistics\\
Boston University\\
111 Cummington Mall, Boston MA 02215.}
\email{kspiliop@math.bu.edu}

\date{\today.}

\maketitle

\begin{abstract}
We consider a general class of non-gradient hypoelliptic Langevin diffusions and study two related questions. The first one is large deviations for hypoelliptic multiscale diffusions. The second one is small mass asymptotics of the invariant measure corresponding to hypoelliptic Langevin operators and of related hypoelliptic Poisson equations. The invariant measure corresponding to the hypoelliptic problem and appropriate hypoelliptic Poisson equations enter the large deviations rate function due to the multiscale effects. Based on the small mass asymptotics we derive that the large deviations behavior of the multiscale hypoelliptic diffusion is consistent with the large deviations behavior of its overdamped counterpart. Additionally, we rigorously obtain an asymptotic expansion of the solution to the related density of the invariant measure and to hypoelliptic Poisson equations with respect to the mass parameter, characterizing the order of convergence. The proof of convergence of invariant measures is of independent interest, as it involves an improvement of the hypocoercivity result for the kinetic Fokker-Planck equation. We do not restrict attention to gradient drifts and our proof provides explicit information on the dependence of the bounds of interest in terms of the mass parameter.
\end{abstract}

\keywords{Keywords: Large deviations, hypoelliptic multiscale diffusions, homogenization, hypocoercivity, non-gradient systems}

\subjclass{MSC2010: 60F10$\cdot$60G99$\cdot$60H10$\cdot$35H10}

\section{Introduction}

The second order Langevin equation
\begin{equation*}%\label{langevin}
\tau \ddot{q}_t=f(q_t)-\lb
\dot{q}_t+\sm(q_t)\dot{W}_t \ , \ q_0=q \in \R^n \ , \
\dot{q}_0=p\in \R^n \ ,
\end{equation*}
is one of the most classical equations in
probability theory as well as in mathematical physics
(\cite{Langevin, Einstein, Smoluchowski}). It describes, under Newton's law, the motion of a particle of mass $\tau$ in a force field
$f(q)$, $q\in \R^n$, subject to random fluctuations and to a
friction  proportional to the velocity. Here $W_t$ is the standard
Wiener process (Brownian motion) in $\R^n$, $\lb>0$ is the friction
coefficient. %The vector field $b(q)$ and the matrix function
%$\sm(q)$ are assumed to be continuously differentiable and bounded
%together with their first derivatives. The matrix
%$a(q)=(a_{ij}(q))=\sm(q)\sm^T(q)$ is assumed to be non--degenerate.
%We can denote the momentum $p_t^\tau=\tau \dot{q}_t^\tau$, so that the
%equation \eqref{langevin} can be written as a first order stochastic
%differential equation
%\begin{equation}\label{firstorderlangevin}
%\dot{q}_t^\tau=\frac{p_t^\tau}{\tau} \ , \ \dot{p}_t^\tau=b(q_t^\tau)-\lb
%\dot{q}_t^\tau+\sm(q_t^\tau)\dot{W}_t \ , \ q_0^\tau=q \in \R^n \ , \
%\dot{q}_0^\tau=p\in \R^n \ .
%\end{equation}
%The basic existence and uniqueness results of the above stochastic equation can be found in \cite[Theorem 5.2.1]{Oksendal}.}

In this paper we are interested in the case where the force field $f(q)$ has multiscale structure and the magnitude of the random fluctuations are small when allowing for inhomogeneous friction coefficient. In particular, our starting object of interest is the second order
hypoelliptic multiscale Langevin equation,%. In particular, we consider
%the second order Langevin equation
\begin{equation}
\tau\ddot{q}^{\varepsilon}_{t}=\left[\frac{\varepsilon}{\delta}b\left(q^{\varepsilon}_{t},
\frac{q^{\varepsilon}_{t}}{\delta}\right)+c\left(q^{\varepsilon}_{t},\frac{q^{\varepsilon}_{t}}
{\delta}\right)-\lambda \left(q^{\varepsilon}_{t}\right) \dot{q}^{\varepsilon}_{t}\right]dt+
\sqrt{\varepsilon}\sigma\left(q^{\varepsilon}_{t},\frac{q^{\varepsilon}_{t}}{\delta}\right)\dot{W}_{t}
\label{Eq:SecondOrderLangevin}
 \ ,
\end{equation}
where $\varepsilon,\delta\ll 1$ and
$\delta=\delta(\varepsilon)\downarrow 0$ as $\varepsilon\downarrow 0$.
Here, $\lambda(q)>0$ is an inhomogeneous friction coefficient. Moreover, $\varepsilon$ represents the strength of the noise, whereas $\delta$ is the parameter that separates the scales.

It is well known that when $\tau\downarrow 0$, the solution to
(\ref{Eq:SecondOrderLangevin}) approximates that of a first order equation.
In particular, if $\lambda$ is a constant, then
in the overdamped case, i.e. when $\tau$ is small, the motion can be approximated
by the first order Langevin equation (see for example \cite{Freidlin SK approximation})
\begin{equation}
\dot{\tilde{q}}^{\varepsilon}_{t}=\frac{1}{\lambda}\left[\frac{\varepsilon}{\delta}
b\left(\tilde{q}^{\varepsilon}_{t},\frac{\tilde{q}^{\varepsilon}_{t}}{\delta}\right)
+c\left(\tilde{q}^{\varepsilon}_{t},\frac{\tilde{q}^{\varepsilon}_{t}}{\delta}\right)\right]
+\sqrt{\varepsilon}\frac{\sigma\left(q^{\varepsilon}_{t},
\frac{q^{\varepsilon}_{t}}{\delta}\right)}{\lambda}\dot{W}_{t}
\label{Eq:FirstOrderLangevin}
 \ .
\end{equation}

The situation is much more complex in the case that the friction coefficient
depends on the position too, see \cite{HottovyMcDanielVopeWehr2014,HuFreidlin2011}.
In particular, in the setting of (\ref{Eq:SecondOrderLangevin}),
the motion of $q^{\varepsilon}$ as $\tau\downarrow 0$ is approximated by
\begin{equation}
\dot{\tilde{q}}^{\varepsilon}_{t}=\left[\frac{\varepsilon}{\delta}
\frac{b\left(\tilde{q}^{\varepsilon}_{t},\frac{\tilde{q}^{\varepsilon}_{t}}{\delta}\right)}
{\lambda(\tilde{q}^{\varepsilon}_{t})}
+\frac{c\left(\tilde{q}^{\varepsilon}_{t},\frac{\tilde{q}^{\varepsilon}_{t}}{\delta}\right)}
{\lambda(\tilde{q}^{\varepsilon}_{t})}-
\varepsilon\frac{\nabla \lambda(\tilde{q}^{\varepsilon}_{t})}{2\lambda^{3}(\tilde{q}^{\varepsilon}_{t})}
\alpha \left(\tilde{q}^{\varepsilon}_{t},\frac{\tilde{q}^{\varepsilon}_{t}}{\delta}\right)\right]
+\sqrt{\varepsilon}\frac{\sigma\left(q^{\varepsilon}_{t},\frac{q^{\varepsilon}_{t}}{\delta}\right)}
{\lambda(\tilde{q}^{\varepsilon}_{t})}\dot{W}_{t}
\label{Eq:FirstOrderLangevin2}
 \ ,
\end{equation}
where $\alpha(q,r)=\sigma(q,r)\sigma^{T}(q,r)$. Clearly, when $\lambda(q)=\lambda=\text{constant}$,
 (\ref{Eq:FirstOrderLangevin2}) reduces to (\ref{Eq:FirstOrderLangevin}).

The first goal of this paper is to consider, in the homogenization regime where $\frac{\varepsilon}{\delta}\rightarrow \infty$
as $\varepsilon,\delta\downarrow 0$, the large deviations behavior of
the solution to (\ref{Eq:SecondOrderLangevin}) $q^{\varepsilon}$ in
such a way that, when the mass is small,   it is consistent with the large deviations behavior
of the solution to the overdamped counterpart
(\ref{Eq:FirstOrderLangevin2}), or equivalently (\ref{Eq:FirstOrderLangevin}). In particular we want to investigate the conditions under which the tail behavior of (\ref{Eq:SecondOrderLangevin}) and of (\ref{Eq:FirstOrderLangevin2}) agree, at least in a limiting sense.

It turns out that we get interesting
non-trivial behavior when the mass $\tau$ relates to
$\varepsilon,\delta$ in a specific way that will be explained in the
sequel. For this reason we shall write $\tau^{\varepsilon}$ in place
of $\tau$ when we want to emphasize this dependence. We prove that
if the mass of the particle $\tau$ scales appropriately with the order of the fluctuations and in particular if it is of  order
$\delta^{2}/\varepsilon$, i.e., if
$\tau=m\frac{\delta^{2}}{\varepsilon}$ with $m$ small but positive,
then the  large deviation behaviors of the overdamped and underdamped systems agree. The large deviations result for (\ref{Eq:SecondOrderLangevin}) is given in Theorem \ref{T:LDPexplcitRepresentation} and the agreement in terms of the large deviations behavior of (\ref{Eq:SecondOrderLangevin}) and (\ref{Eq:FirstOrderLangevin2}) is given in Theorem \ref{T:MainTheorem4}.

In order to derive the large deviations principle we follow the weak convergence
approach, \cite{DupuisEllis,DupuisSpiliopoulos}.
This framework transforms the large deviations problem to convergence of a hypoelliptic
stochastic control problem. Due to the hypoellipticity one needs certain
a-priori bounds that establish compactness, see \cite{HairerPavliotis2004}. We obtain an explicit form of the control (equivalently change of measure) that leads to the proof of large deviations upper bound in the multiscale hypoelliptic case. Even though we do not address this issue in the current paper, we mention that the explicit information on the optimal control can be used for the construction of provably-efficient Monte Carlo schemes in the spirit of the constructions done in \cite{DupuisSpiliopoulosWang,Spiliopoulos2014b} for the corresponding elliptic case.

Under the parameterizations $\tau=m\frac{\delta^{2}}{\varepsilon}$
and when $\delta\ll \varepsilon$ we derive the large deviations
principle for $\{q^{\varepsilon},\varepsilon>0\}$, where
$q^{\varepsilon}$ solves (\ref{Eq:SecondOrderLangevin}), see Theorem \ref{T:LDPexplcitRepresentation}. The large
deviations rate function is derived in closed form and it depends on
$m$. The next natural question is to derive that as $m\downarrow 0$
the large deviations rate function converges to that of the large
deviations principle for the overdamped case, i.e., for the solution to
(\ref{Eq:FirstOrderLangevin2}). However, to our surprise, we find
that even in the case of constant diffusion  the proof of such a
convergence is highly involved. We prove such a convergence in the
special case of  diffusion coefficient
$\sm(q,r)=\sqrt{2\beta\lambda(q)} I$, $\beta>0$ (which is the
parametrization of the fluctuation-dissipation theorem) and we
include a discussion for the general variable diffusion coefficient
case in Remark \ref{R:GeneralDiffusionCoeff}. %In fact, the
%derivation of the convergence of the invariant measure is
%mathematically the most complex part of our paper.
This result
supports the claim that the large
deviations behavior of the multiscale second order Langevin diffusion (\ref{Eq:SecondOrderLangevin}) and of  its first
order counterpart (\ref{Eq:FirstOrderLangevin2}) agree, see Theorem \ref{T:MainTheorem4}. %We also remark that the large deviations
%behavior of (\ref{Eq:FirstOrderLangevin2}) and of
%(\ref{Eq:FirstOrderLangevin}) is qualitatively the same. This is due
%to the fact that the extra noise-induced drift term
%$-\varepsilon\frac{\nabla
%\lambda(\tilde{q}^{\varepsilon}_{t})}{2\lambda^{3}
%(\tilde{q}^{\varepsilon}_{t})}\alpha
%\left(\tilde{q}^{\varepsilon}_{t},\frac{\tilde{q}^{\varepsilon}_{t}}{\delta}\right)$
%has an $\varepsilon$ in front and thus it is negligible from large
%deviations point of view.

The second and related goal of this paper is to rigorously develop small mass asymptotics  for the invariant measure, see Theorem \ref{T:ConvergenceInvariantMeasures} and for certain Poisson equations, see Theorem \ref{T:ConvergenceCellProblems}, that appear in the rate function of
the large deviation principle (see Theorem \ref{T:LDPexplcitRepresentation}) due to the homogenization effects. Our proof of the convergence as $m\da 0$ of the large deviation rate
function requires a thorough analysis of the small mass asymptotic
for the invariant measure of the fast motion corresponding to (\ref{Eq:SecondOrderLangevin}). In
particular, since we will allow the drift term $b(q,r)$
to be a general vector field rather than a gradient field, our proof
of the convergence involves a non-trivial improvement of the hypocoercivity
result for linear Fokker--Planck equation (\cite[Section
1.7]{Villani2006}, see also \cite{DolbeaultMouhotSchmseiser2015}).  If $b(q,r)$ is not a gradient field, then certain operators that appear in the analysis are not anti-symmetric. This implies that extra terms
appear that need to be appropriately handled. Then making use and extending the hypocoercivity results of \cite{Villani2006},
we prove that the invariant measures corresponding to the $m>0$ case,
converges in $L^{2}$ to the invariant measure corresponding to the $m=0$ problem. Here we make use of the $((\cdot,\cdot))$ inner product introduced in \cite{Villani2006} and we combine the different terms in such a way that the desired bounds follow. To accomplish this goal in the general non-gradient case, we use the structure of the hypoelliptic operator in an effective way.

Using the convergence of the invariant measure  and Poincar\'{e} inequality, we also prove that the solutions to related Poisson  equations
(the so-called ``cell problems") that appear due to the homogenization effects of the drift $b(q,r)$, also converge in the appropriate $L^{2}$ sense. In addition to that, the proof provides a rigorous justification
of the corresponding multiscale expansion of the solutions of the corresponding equations in powers of $\sqrt{m}$.  Related heuristic, i.e., without proof, asymptotic expansions can be
also found in \cite{PavliotisStuart}.
We would like to emphasize that our method of proof allows to obtain  upper bounds for the norms of interest with detailed dependence on the parameters of interest, such as the mass of
the particle.

Partial motivation for our work comes from  chemical physics and biology, and in
particular from the dynamical behavior of proteins such as their folding and
binding kinetics. As it has been suggested long time ago (e.g., \cite{LifsonJackson, Zwanzig})
the potential surface of a protein might have a hierarchical structure with
potential minima within potential minima. %The presence of multiple energy scales
%associated with the building blocks of proteins implies that the underlying energy landscapes of
%certain biomolecules can be rugged (i.e., consist of many minima separated by
%barriers of varying heights).
As a consequence, the roughness of the energy
landscapes that describe proteins has numerous effects on their kinetic properties as well as on their behavior at equilibrium.

One of the first papers that used a simple model with two separated time scales to model diffusion in rough potentials is \cite{Zwanzig}. The situation usually investigated \cite{LifsonJackson,Zwanzig,DupuisSpiliopoulosWang2}
is  based on the first order equation (\ref{Eq:FirstOrderLangevin}) even though the physical model and what is many times used in molecular simulations is the more complex second order
Langevin equation that involves both position and velocity, see for example \cite{LelievreStoltzRousset2010}, and would also usually include more than two separated time scales. The usual  choice of coefficients is $\lambda(q)=\text{constant}$,
$b(q,q/\delta)=-\frac{2\beta}{k_{\beta}T}\nabla Q(q/\delta)$ ,
$c(q,q/\delta)=-\frac{2\beta}{k_{\beta}T}\nabla V(q)$ and
$\sigma(q,q/\delta)=\sqrt{2\beta\lambda} I$, where $k_{\beta}$ is the
Boltzmann constant and $T$ is the
temperature, in such a way that the fluctuations-dissipations theorem holds. We remark here that our formulation for the large deviations result is general and includes the parametrization suggested by the fluctuation-dissipation theorem as a special case. Notice that the choice of the separable drift
\begin{equation}
b(q,q/\delta)=-\nabla Q(q/\delta), \qquad  c(q,q/\delta)=-\nabla
V(q) \nonumber
\end{equation}
represents the motion of a massless particle in a rough potential
$\varepsilon Q(q/\delta)+ V(q)$. In particular, the model of interest
 in this case becomes
\begin{equation}
\dot{\tilde{q}}^{\varepsilon}_{t}=-\frac{\varepsilon}{\delta}
\frac{2\beta}{k_{\beta}T}\nabla Q\left(\frac{\tilde{q}^{\varepsilon}_{t}}{\delta}\right)
-\frac{2\beta}{k_{\beta}T}\nabla V\left(\tilde{q}^{\varepsilon}_{t}\right)+
\sqrt{\varepsilon}\sqrt{2\beta}\dot{W}_{t}
\label{Eq:OverdampedLangevinWithSeparableDrift}
 \ .
\end{equation}

The questions of interest in \cite{Zwanzig,DupuisSpiliopoulosWang2} are related to the
effect of taking $\delta\downarrow0$ with $\varepsilon$ small but fixed. This is
almost the same to requiring that $\delta$ goes to $0$ much faster than
$\varepsilon$ does, which is the regime that we study in this paper.

The related mathematical literature is quite rich. For the related hypocoercivity theory the reader is referred to \cite{Villani2006}.
For the case $\delta=1$, the large deviations principle of the solutions to (\ref{Eq:SecondOrderLangevin}) and (\ref{Eq:FirstOrderLangevin})
as $\varepsilon\downarrow 0$ is being compared in \cite{ChenFreidlin2005}. For the case $\varepsilon=1$, periodic homogenization for a special case of (\ref{Eq:SecondOrderLangevin})
(in particular when $c(q,r)=0$ and $b(q,r)=b(r)$) has been addressed in \cite{HairerPavliotis2004}. Also, when $\varepsilon=1$ random homogenization for
(\ref{Eq:SecondOrderLangevin}) when $c(q,r)=0$ and the special case of gradient drift $b(q,r)=-\nabla Q(r)$ has been addressed in \cite{Benabou2006, PapanicolaouVaradhan1985}. More is known about the overdamped
case (\ref{Eq:FirstOrderLangevin}), see \cite{DupuisSpiliopoulos, KosyginaRezakhanlouVaradhan, LionsSouganidis2006, Spiliopoulos2014a} where homogenization and large deviation results for
the solution to equations of the form (\ref{Eq:FirstOrderLangevin}) are obtained under different relations between $\varepsilon$ and $\delta$, in both periodic and random environments.

The rest of the paper is structured as follows. In Section \ref{S:MainResults} we formulate the problem, our assumptions and the main results of this paper in detail.
In Sections \ref{S:LLN}-\ref{S:LDP} we prove the large deviations principle for the hypoelliptic problem. In Sections \ref{S:RateFunctions}-\ref{S:CellProblems} and in the Appendix we exploit the small mass asymptotics.

In particular, using the weak convergence approach we turn the large
deviations principle into a law of large numbers for a stochastic control problem. Section \ref{S:LLN} proves the convergence of the controlled stochastic equation and Section \ref{S:LDP} proves the convergence of the cost functional, which is the Laplace principle.
 In Section \ref{S:RateFunctions} we prove the
small mass limit of the rate function in the  diffusion $\sigma(q,r)=\sqrt{2\beta\lambda(q)} I$ case, using the convergence of the invariant measures as $m\rightarrow 0$ (Section \ref{S:InvariantMeasures}) and of the related ``cell problems" that are auxiliary Poisson equations that appear in the rate functions due to homogenization effects (Section \ref{S:CellProblems}). We emphasize that  Section \ref{S:InvariantMeasures} is of
independent interest as it is an extension of the hypo--coercivity
result for the linear kinetic Fokker--Planck equation \cite[Section
1.7]{Villani2006}, since we do not restrict our attention to drifts that are of gradient form. The method of proofs also yields explicit decay rates of the norms of interest
with regards to parameters of interest such as the mass of the particle. Most of the proofs to technical lemmas are deferred to the Appendix.

\section{Problem formulation, assumptions and main results}\label{S:MainResults}

In this section, we formulate more precisely the problem that we are studying in this paper, we state our main assumptions and our main results.
In preparation for stating the main results, we recall the concept of a
Laplace principle.
\begin{definition}
\label{Def:LaplacePrinciple} Let $\{q^{\varepsilon},\varepsilon>0\}$ be a family of
random variables taking values on a Polish space $\mathcal{S}$ and let $I$ be a rate function
on $\mathcal{S}$. We say that $\{q^{\varepsilon},\varepsilon>0\}$ satisfies the
Laplace principle with rate function $I$ if for every bounded and continuous
function $h:\mathcal{S}\rightarrow\mathbb{R}$
\begin{equation*}
\lim_{\varepsilon\downarrow0}-\varepsilon\ln\E\left[  \exp\left\{
-\frac{h(q^{\varepsilon})}{\varepsilon}\right\}  \right]  =\inf_{x\in\mathcal{S}%
}\left[  I(x)+h(x)\right]  . \label{Eq:LaplacePrinciple}%
\end{equation*}
\end{definition}

If the rate function has compact level sets, then the Laplace
principle is equivalent to the corresponding large deviations
principle with the same rate function (see Theorems 2.2.1 and 2.2.3
in \cite{DupuisEllis}). Hence, instead of proving a large deviations
principle for $\{q^{\varepsilon}\}$ we prove a Laplace principle for
$\{q^{\varepsilon}\}$.

Our main regularity assumption in regards to the coefficients of (\ref{Eq:SecondOrderLangevin}) is given by Condition \ref{A:Assumption1}.
\begin{condition}
\label{A:Assumption1}
The functions $b(q,r),c(q,r),\sigma(q,r)$ are
\begin{enumerate}
\item   periodic with period $1$ in the second variable
in each direction, and
\item  $C^{1}(\mathbb{R}^{d})$ in $r$ and $C^{2}(\mathbb{R}^{d})$ in $q$ with all
partial derivatives continuous and globally bounded in $q$ and $r $.
\end{enumerate}
The diffusion matrix $\alpha(q,r)=\sigma(q,r)\sigma^{T}(q,r)$ is uniformly non-degenerate.
There exist constants $0<\underline{\lambda}<\overline{\lambda}$ such that for every
$q\in\mathbb{R}^{d}$,  $\underline{\lambda}<\lambda(q)<\overline{\lambda}$.
Moreover, the function $\lambda(q)$ is in $C^{1}(\mathbb{R}^{d})$ with bounded partial derivatives.
\end{condition}

Using the parametrization $\tau=m\frac{\delta^{2}}{\varepsilon}$, the system being considered is
\begin{align}
m\dfrac{\dt^2}{\ve}\ddot{q}_t^\ve&=\left[\dfrac{\ve}{\dt}
b\left(q_t^\ve,\dfrac{q_t^\ve}{\dt}\right)+c\left(q_t^\ve,\dfrac{q_t^\ve}{\dt}\right)-\lambda(q_t^\ve)\dot{q}_t^\ve\right]
+\sqrt{\ve}\sigma\left(q_t^\ve,\dfrac{q_t^\ve}{\dt}\right)\dot{W}_t  \ .\label{Eq:OverdampedSystem} %\nonumber\\
%q_0^\ve&=q_0\in \R^d \ , \
%p_0^\ve\sim \dfrac{e^{-\frac{|p|^2}{2\bt}}}{(2\pi \bt)^{d/2}}dp \ .
\end{align}

Setting $p^{\varepsilon}_{t}=\sqrt{m}\frac{\delta}{\varepsilon}\dot{q}^{\varepsilon}_{t}$
we obtain the following system of equations which we also supplement with initial conditions
\begin{align}
\dot{q}_t^\ve&=\dfrac{1}{\sqrt{m}}\dfrac{\ve}{\dt}p_t^\ve \ ,\label{Eq:TransformedSecondOrderSystem}\\
\dot{p}_t^\ve&=\dfrac{1}{\sqrt{m}}\dfrac{1}{\dt}\left[\dfrac{\ve}{\dt}
b\left(q_t^\ve,\dfrac{q_t^\ve}{\dt}\right)+c\left(q_t^\ve,\dfrac{q_t^\ve}{\dt}\right)\right]
-\dfrac{\lambda(q_t^\ve)}{m}\dfrac{\ve}{\dt^2}p_t^\ve+
\dfrac{\sqrt{\ve}}{\dt}\frac{\sigma\left(q_t^\ve,\dfrac{q_t^\ve}{\dt}\right)}{\sqrt{m}}\dot{W}_t
\ ,\nonumber\\
q_0^\ve&=q_o\in \R^d \ , \ p_0=p_o\in \R^d \ .\nonumber
\end{align}

Condition \ref{A:Assumption1}, guarantees that (\ref{Eq:OverdampedSystem}) and (\ref{Eq:TransformedSecondOrderSystem}), have a unique strong solution; this is a classical result, see for example \cite{Freidlin SK approximation} or Theorem 5.2.1 of \cite{Oksendal}. The infinitesimal generator for the $(q,p)$ process satisfying (\ref{Eq:TransformedSecondOrderSystem}) is given by
\begin{align*}
\cL &=\dfrac{1}{\sqrt{m}}\left[\frac{\epsilon}{\delta}p \cdot \grad_q +\frac{\epsilon}{\delta^{2}}b(q,q/\delta)\cdot \grad_p+\frac{1}{\delta}c(q,q/\delta)\cdot \grad_p\right]\nonumber\\
&\qquad\qquad +\frac{1}{m}\frac{\epsilon}{\delta^{2}}\left[-\lb(q) p\cdot \grad_p
+\frac{1}{2}\alpha(q,q/\delta):\nabla^{2}_p\right] \ ,
\end{align*}
where we recall that $\alpha(q,r)=\sigma(q,r)\sigma^{T}(q,r)$.

We can assume that $p_{o}$ is a random variable, as long as it is
independent of the driving Wiener process $W_{t}$ and as long as
$\E\left[e^{\frac{1}{2}|\sigma_{\text{max}}^{-1}p_{o}|^{2}}\right]<\infty$
(see Appendix \ref{S:AppendixA}), where we have defined
$\sigma_{\text{max}}=\max\li_{i,j=1,\cdots
d}\sup\li_{(q,r)}\left|\sigma_{i,j}(q,r)\right|$.

Sometimes, we may write $X_t^\ve=(q_t^\ve, p_t^\ve)$. Let
$|\bullet|$ be the Euclidean norm in $\R^d$. We introduce the
control set
$$\cA=\left\{u=\{u_s\in \R^d: 0\leq s \leq T\} \text{ progressively }
\cF_s \text{--measurable and }
\E\play{\int_0^T|u_s|^2ds<\infty}\right\} \ .$$

The result in \cite{BoueDupuis} gives the following representation
$$-\ve\ln\E_{q_0}\left[\exp\left(-\dfrac{h(q_\bullet^\ve)}{\ve}\right)\right]
=\inf\li_{u\in \cA}\E_{q_0}\left[\dfrac{1}{2}\play{\int_0^T
|u_s|^2ds+h(\qbar_\bullet^\ve)}\right] \ . $$

Here the process $\qbar_t^\ve$ is the $q$--component of the
hypoelliptic controlled diffusion process $\Xbar_t^\ve=(\qbar_t^\ve,
\pbar_t^\ve)$:
\begin{align}
\dot{\qbar}_t^\ve&=\dfrac{1}{\sqrt{m}}\dfrac{\ve}{\dt}\pbar_t^\ve \ ,
\label{Eq:SecondOrderControlSystem}\\
\dot{\pbar}_t^\ve&=\dfrac{1}{\sqrt{m}}\dfrac{1}{\dt}\left[\dfrac{\ve}{\dt}
b\left(\qbar_t^\ve,\dfrac{\qbar_t^\ve}{\dt}\right)+c\left(\qbar_t^\ve,\dfrac{\qbar_t^\ve}{\dt}\right)\right]
-\dfrac{\lb(q_t^\ve)}{m}\dfrac{\ve}{\dt^2}\pbar_t^\ve+\dfrac{1}{\dt}\frac{\sigma\left(q_t^\ve,\dfrac{q_t^\ve}{\dt}\right)}{\sqrt{m}}u_t\nonumber\\
&\quad+\dfrac{\sqrt{\ve}}{\dt}\frac{\sigma\left(q_t^\ve,\dfrac{q_t^\ve}{\dt}\right)}{\sqrt{m}}\dot{W}_t
\ ,\nonumber\\
\qbar_0^\ve&=q_o\in \R^d \ , \ \pbar_0^\ve =p_o\in \R^d \ . \nonumber
\end{align}

Let $u^\ve_\bullet\in \cA$ and $\Xbar_s^\ve$ solves (\ref{Eq:SecondOrderControlSystem}) with $u^\ve$
in place of $u$. Let the control space be $\cZ=\R^d$, the fast variable
space be $\cY=\R^d\times \T^d$. We see that the fast variable is
actually $\left(\pbar_s^\ve, \dfrac{\qbar_s^\ve}{\dt}\right)$. Let us define the operator
\begin{align*}
\cL^{m}_q\Phi(p,r)&=\dfrac{1}{\sqrt{m}}\left[p \cdot \grad_r\Phi(p,r)+b(q,r)\cdot \grad_p\Phi(p,r)\right]\nonumber\\
&\qquad +\frac{1}{m}\left[-\lb(q) p\cdot \grad_p
\Phi(p,r)+\frac{1}{2}\alpha(q,r):\nabla^{2}_p\Phi(p,r)\right] \ .
\end{align*}
%where we recall that $\alpha(q,r)=\sigma(q,r)\sigma^{T}(q,r)$.
For each fixed $q$, the operator $\cL^{m}_q$ defines a hypoelliptic
diffusion process on $(p,r)\in\cY=\R^d\times \T^d$. Let $\mu(dpdr|q)$
be the  unique invariant measure for this process. Notice that $\cL^{m}_q$ is effectively the operator corresponding to the fast motion. The following centering condition is essential for the validity of  the results.
\begin{condition}
\label{A:Assumption2}
We assume that for every $q\in\mathbb{R}^{d}$
\begin{equation*}
\int_\cY b(q,r)\mu(dpdr|q)=0.
\end{equation*}
\end{condition}

Let us consider the preliminary cell problem
\begin{align}
\cL^{m}_q \Phi(p,r)&=-\dfrac{1}{\sqrt{m}}p \ , \qquad
\int_{\mathcal{Y}}\Phi(p,r)\mu(drdp|q)=0 \ .
\label{Eq:HypoellipticCellProblem}
\end{align}

It is clear that the solution to (\ref{Eq:HypoellipticCellProblem})
$\Phi$ depends also on $q$, but we sometimes suppress this in the
notation for convenience. By the work of \cite{HairerPavliotis2004},
we know that under Condition \ref{A:Assumption2}, the PDE
(\ref{Eq:HypoellipticCellProblem}) has a unique, smooth solution
that does not grow too fast at infinity, see Appendix
\ref{S:AppendixA} for more details.  Note that the function $\Phi$
is actually a vector valued function
$\Phi(p,r)=(\Phi_1(p,r),...,\Phi_d(p,r))$.

Then our first main result reads as follows.
\begin{theorem}
\label{T:LDPexplcitRepresentation} Let $\{(q^{\varepsilon},
p^{\varepsilon}),\varepsilon>0\}$ be  the unique  solution to
(\ref{Eq:TransformedSecondOrderSystem}). Under Conditions
\ref{A:Assumption1} and \ref{A:Assumption2},
$\{q^{\varepsilon},\varepsilon>0\}$ satisfies the large deviations
principle with rate function
\begin{equation*}
S_{m}(\phi)=\left\{
\begin{array}{ll}
\play{\dfrac{1}{2}\int_0^T
(\dot{\phi}_s-r_{m}(\phi_s))^TQ^{-1}_{m}(\phi_s)(\dot{\phi}_s-r_{m}(\phi_s))ds}&
\text{ if } \phi\in \mathcal{AC}([0,T]; \R^d),\phi_{0}=q_{o}\\
+\infty & \text{ otherwise \ .}
\end{array}\right.
\label{Eq:ActionFunctional1}%
\end{equation*}
where
$$r_{m}(q)=\dfrac{1}{\sqrt{m}}\int_{\cY}\grad_p\Phi(p,r)c(q,r)\mu(dpdr|q) \ ,$$
$$Q_{m}(q)=\frac{1}{m}\int_{\cY}\grad_p\Phi(p,r)\alpha(q,r)(\grad_p\Phi(p,r))^{T}\mu(dpdr|q) \ .$$
\end{theorem}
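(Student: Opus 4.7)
The strategy is the weak convergence framework of Dupuis-Ellis, reducing the large deviations principle to a Laplace principle via the Bou\'e-Dupuis representation already recorded in the excerpt. Since $\alpha$ is uniformly nondegenerate and $\mu$ is a probability measure, the effective diffusion matrix $Q_m(q)$ is uniformly positive definite, so $S_m$ has compact level sets on $\contfunc([0,T];\R^d)$ and the equivalence of Laplace principle and LDP applies. The task is then to show
$$\lim_{\ve\da 0}\inf_{u\in\cA}\E_{q_o}\left[\frac{1}{2}\int_0^T|u_s|^2\, ds + h(\qbar^\ve_\bullet)\right] = \inf_{\phi}\bigl[S_m(\phi) + h(\phi)\bigr],$$
by matching Laplace upper and lower bounds.

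For the Laplace lower bound, I pick near-optimal controls $u^\ve$ with $\E\int_0^T|u^\ve_s|^2 ds \leq C$ and introduce occupation measures $P^\ve$ on $[0,T]\times\cY\times\cZ$ encoding the joint behaviour of $(s,\pbar^\ve_s,\qbar^\ve_s/\dt,u^\ve_s)$. The principal a priori ingredient is a uniform $L^2$ bound on $\pbar^\ve$, obtained by applying It\^o to $|\pbar^\ve|^2$ and balancing the friction term $-\lb(q)\pbar^\ve$ (with $\lb\geq\underline{\lambda}>0$ from Condition \ref{A:Assumption1}) against the drift, noise and control contributions; this yields tightness of $\{P^\ve\}$ on a suitably compactified space, and tightness of $\qbar^\ve$ in $\contfunc([0,T];\R^d)$ then follows once the drift contribution to $\qbar^\ve$ is controlled.

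The identification of the limit is the central technical step. Ergodicity of the hypoelliptic fast process generated by $\cL^m_q$ on $\cY$ at frozen $q$ forces any subsequential limit $P$ to disintegrate as $P(ds\,dp\,dr\,du) = ds\,\mu(dp\,dr|\bar{q}_s)\,\eta_s(du|p,r)$ for a stochastic kernel $\eta_s$, where $\bar{q}$ is the weak limit of $\qbar^\ve$. To characterize $\bar{q}$, I apply It\^o to $\Phi(\pbar^\ve_s,\qbar^\ve_s/\dt)$: the generator of $(\pbar^\ve,\qbar^\ve/\dt)$ at frozen slow variable scales as $(\ve/\dt^2)\cL^m_q$, so using the cell problem identity $\cL^m_q\Phi = -p/\sqrt{m}$ and multiplying by $\dt$ one rearranges into
$$\qbar^\ve_t - q_o = \int_0^t \frac{1}{\sqrt{m}}\grad_p\Phi(\pbar^\ve_s,\qbar^\ve_s/\dt)\cdot\bigl[c(\qbar^\ve_s,\qbar^\ve_s/\dt) + \sm(\qbar^\ve_s,\qbar^\ve_s/\dt)u^\ve_s\bigr]ds + R^\ve_t,$$
where $R^\ve_t$ consists of a boundary term of order $\dt$ and a stochastic integral whose quadratic variation is of order $\ve$, both vanishing in the regime $\dt\ll\ve$. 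Passing to the limit using convergence of $P^\ve$ yields
$$\dot{\bar{q}}_s = r_m(\bar{q}_s) + \int_{\cY}\frac{1}{\sqrt{m}}\grad_p\Phi(p,r)\,\sm(\bar{q}_s,r)\,\bar{u}_s(p,r)\,\mu(dp\,dr|\bar{q}_s),$$
with $\bar{u}_s(p,r) = \int u\,\eta_s(du|p,r)$. Fatou plus Jensen bound the cost below by $\tfrac{1}{2}\int_0^T\int_{\cY}|\bar{u}_s|^2\mu\, ds$, and minimizing this quadratic under the linear constraint on $\dot{\bar{q}}-r_m$ reproduces exactly $\tfrac{1}{2}(\dot{\bar{q}}-r_m)^T Q_m^{-1}(\dot{\bar{q}}-r_m)$.

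For the matching Laplace upper bound, given $\phi\in\mathcal{AC}([0,T];\R^d)$ with $S_m(\phi)<\infty$, I construct feedback controls of the form $u^\ve_s = \tfrac{1}{\sqrt{m}}\sm^T(\qbar^\ve_s,\qbar^\ve_s/\dt)(\grad_p\Phi)^T Q_m^{-1}(\phi_s)(\dot\phi_s-r_m(\phi_s))$ and verify through an analogous occupation-measure argument that $\qbar^\ve\to\phi$ while the cost converges to $S_m(\phi)$. The main obstacle is that the noise acts only on the momentum variable, so ergodicity of the fast pair $(p,r)\in\cY$ and the effective averaging rest entirely on the hypocoercive structure of $\cL^m_q$; existence, regularity, and growth of the cell problem solution $\Phi$ under Condition \ref{A:Assumption2}, supplied in the Appendix, combined with the careful cancellations in the It\^o expansion above, are what make the reduction to the explicit rate function $S_m$ go through.
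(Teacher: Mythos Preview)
Your proposal is correct and follows essentially the same route as the paper: weak convergence via Bou\'e--Dupuis, occupation measures encoding the fast pair and control, the key It\^o expansion of $\Phi(\pbar^\ve,\qbar^\ve/\dt)$ using $\cL^m_q\Phi=-p/\sqrt{m}$ to rewrite $\qbar^\ve$, Fatou for the lower bound, and the explicit feedback control $\bar u = m^{-1/2}\sigma^T(\grad_p\Phi)^T Q_m^{-1}(\nu-r_m)$ for the upper bound.

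Two minor points of divergence are worth flagging. First, the paper inserts a time--averaging scale $\Dt=\Dt(\ve)$ into the occupation measure (your $P^\ve$ is essentially the paper's $\occupation^{\ve,\Dt}$), which is what makes the identification step ``limit of $\occupation^{\ve,\Dt}$ has $(p,r)$--marginal $\mu(\cdot|\bar q_s)$'' go through cleanly via a martingale--problem argument (Part~4 of the proof of Theorem~\ref{T:LLN}); you gesture at this with ``ergodicity forces the disintegration'' but the $\Dt$ device is how it is actually implemented. Second, your stated a~priori input---a uniform $L^2$ bound on $\pbar^\ve$ from It\^o on $|\pbar^\ve|^2$---suffices for tightness of the occupation measures in the $p$--direction, but it is \emph{not} enough to kill the boundary term $\dt\,\Phi(\pbar^\ve_t,\qbar^\ve_t/\dt)$, since $\Phi$ is unbounded in $p$. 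The paper invokes the Hairer--Pavliotis exponential moment bounds $\E[\sup_t e^{\eta|\pbar^\ve_t|^2}]<\infty$ (Appendix~\ref{S:AppendixA}), which give $\E\sup_t|\Phi|^p\leq C\dt^{-p/2}$ and hence $\dt\,\Phi=O(\dt^{1/2})$. You do cite the Appendix for the growth of $\Phi$ at the end, so this is more a matter of emphasis than a gap, but your phrase ``boundary term of order $\dt$'' is not literally correct without that stronger moment input.
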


To support the claim that the particular parametrization is consistent
 with the large deviations principle of the overdamped case (\ref{Eq:FirstOrderLangevin2}),
 we need to prove
that $\lim\li_{m\rightarrow 0}S_{m}(\phi)=S_{0}(\phi)$, where
$S_{0}(\phi)$ is the rate function associated to
(\ref{Eq:FirstOrderLangevin2}). To that end, we recall the
corresponding large deviations result from
\cite{DupuisSpiliopoulos}.

Let $\mu_{0}(dr|q)$ be the unique invariant measure
corresponding to the operator
\begin{equation*}
\mathcal{L}_{q}^{0}=\frac{1}{\lambda(q)}b(q,r)\cdot\nabla_{r}+\frac{1}{2\lambda(q)}\alpha(q,r): \nabla^{2}_{r} \label{OperatorRegime1}%
\end{equation*}
equipped with periodic boundary conditions in $r$ ($q$ is being treated as a
parameter here). By Theorem \ref{T:ConvergenceInvariantMeasures}, Condition \ref{A:Assumption2} implies that
the following centering condition for the drift term $b$:
\[
\int_{\bar{\mathcal{Y}}}b(q,r)\mu_{0}(dr|q)=0,
\]
where $\bar{\mathcal{Y}}=\mathbb{T}^{d}$ denotes the $d$-dimensional torus. Under this centering condition,
 the cell problem
\begin{equation}\cL^{0}_q\chi_\ell(q,r)=-\dfrac{1}{\lb(q)}b_\ell(q,r) \ , \
\int_{\bar{\cY}}\chi_\ell(q,r)\mu_{0}(dr|q)=0 \ , \ell=1,2,...,d \ .
\label{Eq:CellProblemOverdampedCase}\end{equation} has a unique
bounded and sufficiently smooth solution $\chi=(\chi_1,...,\chi_d)$. After these definitions we
recall the result from \cite{DupuisSpiliopoulos} that will be of use to
us.

\begin{theorem}[Theorem 5.3 in \cite{DupuisSpiliopoulos}]
\label{T:MainTheorem3} Let $\{q^{\varepsilon},\varepsilon>0\}$ be the unique
solution to (\ref{Eq:FirstOrderLangevin}). Under Conditions
\ref{A:Assumption1} and \ref{A:Assumption2}, $\{q^{\varepsilon},\varepsilon>0\}$
satisfies a large deviations principle with rate function
\begin{equation*}
S_{0}(\phi)=%
\begin{cases}
\frac{1}{2}\int_{0}^{T}(\dot{\phi}_{s}-r_{0}(\phi_{s}))^{T}Q^{-1}_{0}(\phi_{s}%
)(\dot{\phi}_{s}-r_{0}(\phi_{s}))ds & \text{if }\phi\in\mathcal{AC}%
([0,T];\mathbb{R}^{d}), \phi_{0}=q_{o}\\
+\infty & \text{otherwise.}%
\end{cases}
\label{ActionFunctional1}%
\end{equation*}
where
\[
 r_{0}(q)=\frac{1}{\lambda(q)}\int_{\bar{\mathcal{Y}}}\left((I+\frac{\partial\chi}
 {\partial r}(q,r))\right)c(q,r)\mu_{0}(dr|q)
\]
and
\[
 Q_{0}(q)=\frac{1}{\lambda^{2}(q)}\int_{\bar{\mathcal{Y}}}\left(I+\frac{\partial\chi}
 {\partial r}(q,r)\right)\alpha(q,r)\left(I+\frac{\partial\chi}{\partial r}(q,r)\right)^{T}\mu_{0}(dr|q).
\]
\end{theorem}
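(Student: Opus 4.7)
The natural route is the weak-convergence / Laplace-principle approach, parallel to the scheme used for the hypoelliptic Theorem \ref{T:LDPexplcitRepresentation}, but exploiting that here the fast variable already lives on $\T^d$, so the corrector $\chi$ solving (\ref{Eq:CellProblemOverdampedCase}) is bounded and smooth by standard uniformly elliptic theory on the torus. First I would invoke the Bou\'e--Dupuis representation
\[
-\ve\ln\E\left[\exp\left(-\tfrac{h(q^\ve)}{\ve}\right)\right]
=\inf_{u\in\cA}\E\left[\tfrac12\int_0^T|u_s|^2ds+h(\qbar^\ve)\right] \ ,
\]
where $\qbar^\ve$ is the controlled version of (\ref{Eq:FirstOrderLangevin2}) with the extra drift $\sm(\qbar^\ve,\qbar^\ve/\dt)u_t/\lb(\qbar^\ve)$. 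The Laplace principle then reduces to showing $\limsup\le\inf[S_0+h]$ via a recovery sequence and $\liminf\ge\inf[S_0+h]$ along any sequence of near-optimal controls $u^\ve$.

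The homogenisation step absorbs the stiff drift $(\ve/\dt)b/\lb$ by applying It\^o to $\qbar^\ve_t+\ve\chi(\qbar^\ve_t,\qbar^\ve_t/\dt)$; using $\cL_q^0\chi=-b/\lb$, the singular term cancels, leaving a drift $(I+\partial_r\chi)c/\lb$, an $O(\sqrt{\ve})$ martingale, a control term $(I+\partial_r\chi)\sm u_t/\lb$, and negligible remainders. The variational bound delivers a uniform $L^2$ bound on $\{u^\ve\}$, so the joint occupation measures $P^\ve$ of $(s,\qbar^\ve_s,\qbar^\ve_s/\dt,u^\ve_s)$ on $[0,T]\times\R^d\times\T^d\times\R^d$ are tight (the control in the weak topology). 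On any limit $P$, ergodicity of the fast process under $\cL_q^0$ forces the $r$-marginal conditional on $(s,q)$ to equal $\mu_0(dr|q)$; writing $\phi$ for the slow limit, one obtains
\[
\dot\phi_s=\tfrac{1}{\lb(\phi_s)}\int\left(I+\tfrac{\partial\chi}{\partial r}\right)(c+\sm v)\,P(dvdr|s,\phi_s) \ ,
\]
and the asymptotic cost is $\tfrac12\int_0^T\int|v|^2 P(dvdr|s,\phi_s)\,ds$.

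Minimising this cost over $P$ with the drift constraint fixed gives, via Lagrange multipliers and the identity $\int(I+\partial_r\chi)\al(I+\partial_r\chi)^T\mu_0(dr|q)=\lb^2(q) Q_0(q)$, the pointwise optimiser $v_s=\sm^T(I+\partial_r\chi)^T Q_0(\phi_s)^{-1}(\dot\phi_s-r_0(\phi_s))/\lb(\phi_s)$, which produces the integrand $\tfrac12(\dot\phi_s-r_0(\phi_s))^T Q_0(\phi_s)^{-1}(\dot\phi_s-r_0(\phi_s))$ and hence exactly $S_0(\phi)$; this delivers the Laplace lower bound on the infimum. The matching recovery sequence is produced by plugging a feedback approximation of the above optimiser into the state and verifying that $\qbar^\ve\to\phi$ in probability while $\tfrac12\E\int|u^\ve|^2ds\to S_0(\phi)$.

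The main obstacle is the joint passage to the limit of the control and the fast variable. Since $u^\ve_s$ may correlate with $\qbar^\ve_s/\dt$, one cannot simply replace $\qbar^\ve/\dt$ by its invariant average inside $\sm\,u^\ve$ when identifying the effective drift; the remedy, which drives the whole argument, is to work at the level of the joint occupation measure $P^\ve$ and exploit convexity of the quadratic cost in the control marginal, so that the homogenised rate function emerges from a convex-duality argument rather than from naive averaging. Uniform ellipticity of $\al$ and smoothness of $\chi$ (from Condition \ref{A:Assumption1} and torus regularity) provide the inversions and continuous dependence needed throughout.
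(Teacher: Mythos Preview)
The paper does not give its own proof of this statement: Theorem \ref{T:MainTheorem3} is quoted verbatim as ``Theorem 5.3 in \cite{DupuisSpiliopoulos}'' and is simply recalled as a known result needed to compare with the hypoelliptic rate function $S_m$. Your outline is precisely the weak-convergence/occupation-measure scheme of \cite{DupuisSpiliopoulos}, and it is also the template the present paper follows (Sections \ref{S:LLN}--\ref{S:LDP}) for the hypoelliptic analogue Theorem \ref{T:LDPexplcitRepresentation}; so your approach is correct and essentially identical to the source.
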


In order now to show that $\lim\li_{m\rightarrow 0}S_{m}(\phi)=S_{0}(\phi)$, we need to study the limiting begavior of $\mu(dpdr|q)$ and of $\nabla_{p}\Phi(p,r)$  as $m\rightarrow 0$.
For this purpose, let us assume that  $\sm(q,r)=\sqrt{2\beta\lambda(q)}I, \beta>0$, i.e., we assume that the noise is such that we are in fluctuation-dissipation balance.  In this case, for a function  $f\in\mathcal{C}^{2}(\mathcal{Y})$, we have
\begin{align*}
\cL^{m}_q f(p,r)&=\frac{\lb(q)}{m}\cA f(p,r)+\dfrac{1}{\sqrt{m}}\cB f(p,r)  \ ,
\end{align*}
where $\cA f= -p\cdot \grad_p
f+\beta \Delta_p f$ and $\cB f(p,r)=p \cdot \grad_r f+b(q,r)\cdot \grad_p f$. Likewise, we have
\begin{equation*}
\mathcal{L}_{q}^{0}f(r)=\frac{1}{\lambda(q)}b(q,r)\cdot\nabla_{r}f(r)+\beta \Delta_{r}f(r) \ .
\end{equation*}

We denote by
$\mu(dpdr|q)=\rho^m(p,r|q)dpdr$ the invariant measure corresponding to the operator $\cL^{m}_{q}$. Also, let us write $\mu_{0}(dr|q)=\rho_0(r|q)dr$ for the invariant measure corresponding to the operator $\mathcal{L}_{q}^{0}$.

Let us also define $\pi(dp)=\rho^{\text{OU}}(p)dp$ to be the invariant measure on $\R^d$ for the Ornstein--Uhlenbeck process with generator $\cA$. With this notation, let us write
$\rho^m(p,r)=\widetilde{\rho}^m(p,r) \rho^0(p,r)$, where $\rho^0(p,r)=\rho^{\text{OU}}(p)\rho_0(r)$, suppressing the dependence on $q$.

Then, in  Sections \ref{S:InvariantMeasures} and \ref{S:CellProblems} respectively we prove the following Theorems which constitute the second main result of our paper.
\begin{theorem}\label{T:ConvergenceInvariantMeasures}
Let Condition \ref{A:Assumption1} hold and assume that $\sm(q,r)=\sqrt{2\beta\lambda(q)} I, \beta>0$. Then, for every $q\in\mathbb{R}^{d}$, we have
\begin{align*}
\lim_{m\rightarrow0}\left\|\tilde{\rho}^{m}(p,r)-1\right\|_{L^{2}(\mathcal{Y};
\rho^{0})}&=0 \ .
\end{align*}
\end{theorem}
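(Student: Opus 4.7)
I would first recast the stationarity condition $(\mathcal{L}^m_q)^*\rho^m = 0$ as an equation for $h^m = \tilde{\rho}^m - 1$, which has mean zero against $\rho^0$ by the normalization of both densities. Using the identity $(\mathcal{L}^m_q)^*(f\rho^0)=\rho^0\,\mathcal{L}^{m,\dagger}_q f$, where $\mathcal{L}^{m,\dagger}_q$ denotes the $L^2(\rho^0)$--adjoint, the stationarity reduces to $\mathcal{L}^{m,\dagger}_q(1+h^m)=0$. Since $\rho^{\text{OU}}$ is invariant for $\mathcal{A}$, the operator $\mathcal{A}$ is self-adjoint on $L^2(\rho^0)$; however, because $\rho_0$ is invariant for $\mathcal{L}_q^0$ rather than for $\mathcal{B}$ alone, a direct computation yields $\mathcal{B}^\dagger = -\mathcal{B}+p\cdot\xi$, where $\xi(q,r) = b(q,r)/\beta-\nabla_r\log\rho_0(r|q)$ vanishes in the gradient case but in general does not. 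The equation for $h^m$ therefore takes the schematic form
\begin{equation*}
\frac{\lambda(q)}{m}\mathcal{A}h^m \;-\; \frac{1}{\sqrt{m}}\mathcal{B}h^m \;+\; \frac{1}{\sqrt{m}}(p\cdot\xi)\,h^m \;=\; -\frac{p\cdot\xi}{\sqrt{m}},
\end{equation*}
and the source on the right is odd in $p$, hence lies in the range of $\mathcal{A}$ --- a fact that will allow the $1/\sqrt{m}$ singularity in the forcing to be matched by the $1/m$ coercivity provided by $\mathcal{A}$.

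\textbf{Energy estimate via Villani's modified norm.}
The proof then proceeds by testing this equation against $h^m$ in the modified inner product
\begin{equation*}
((h,h))_m = \|h\|^2_{L^2(\rho^0)} + a_m\|\nabla_p h\|^2_{L^2(\rho^0)} + 2c_m\langle\nabla_p h,\nabla_r h\rangle_{L^2(\rho^0)} + b_m\|\nabla_r h\|^2_{L^2(\rho^0)}
\end{equation*}
from \cite{Villani2006}, with weights $a_m,b_m,c_m$ scaled as specific powers of $m$. The Dirichlet form of $\mathcal{A}$ produces the coercive term $\tfrac{\lambda\beta}{m}\|\nabla_p h^m\|^2$, and the commutator identities $[\nabla_p,\mathcal{A}]=-\nabla_p$ and $[\nabla_p,\mathcal{B}]=\nabla_r$ transfer this coercivity into control over $\|\nabla_r h^m\|^2$ at a prefactor of $1/\sqrt{m}$; this is the heart of the hypocoercive mechanism. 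Together with the Poincar\'e inequalities for the OU measure in $p$ and for $\rho_0$ in $r$, the modified norm $((\cdot,\cdot))_m$ dominates $\|\cdot\|^2_{L^2(\rho^0)}$, so bounding it suffices to bound $h^m$.

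\textbf{Main obstacle and conclusion.}
The principal difficulty, and the extension of \cite{Villani2006} highlighted in the introduction, is that in the non-gradient case the symmetric part $p\cdot\xi/2$ of $\mathcal{B}$ on $L^2(\rho^0)$, together with the inhomogeneity $-p\cdot\xi/\sqrt{m}$, produces terms in the hypocoercivity identity that are not of definite sign. My plan is to control them using the OU moment bound $\|p\cdot\xi\|^2_{L^2(\rho^0)} = \beta\int|\xi|^2\rho_0\,dr$, finite by Condition \ref{A:Assumption1}, together with Cauchy--Schwarz and Young's inequality, absorbing them into the coercive term $\tfrac{\lambda\beta}{m}\|\nabla_p h^m\|^2$ at the cost of an arbitrarily small prefactor thanks to the $\sqrt{m}$-smallness of the cross-coupling. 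The differentiated equations satisfied by $\nabla_p h^m$ and $\nabla_r h^m$ (obtained by commuting $\nabla_p$ and $\nabla_r$ past the equation, which generates additional lower-order terms that must also be absorbed) are handled in the same spirit. Careful bookkeeping across the three components of $((\cdot,\cdot))_m$ and an appropriate choice of weights $a_m,b_m,c_m$ as fractional powers of $m$ then yields an inequality of the form $((h^m,h^m))_m\le C\sqrt{m}\,((h^m,h^m))_m^{1/2}$, from which $((h^m,h^m))_m=O(m)$ and in particular $\|h^m\|^2_{L^2(\mathcal{Y};\rho^0)} = O(m)\to 0$. Beyond the bare qualitative statement this also provides an explicit convergence rate in the mass parameter $m$, consistent with the quantitative bounds emphasized in the paper.
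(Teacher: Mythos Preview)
Your proposal is correct and follows essentially the same route as the paper: both derive the equation for $h^m=\tilde\rho^m-1$ (the paper's $\delta^m$), identify the non-antisymmetric part $\mathcal{B}^\dagger=-\mathcal{B}+p\cdot\xi$ (the paper's $h(r)$), test in a Villani-type modified $H^1$ inner product, control the extra non-gradient terms via the Gaussian integration-by-parts bound $|\langle p\cdot\xi,fg\rangle|\le K(\|f\|\|\nabla_p g\|+\|\nabla_p f\|\|g\|)$, differentiate the equation to bound $\|\nabla_p\nabla_p h^m\|$ and $\|\nabla_p\nabla_r h^m\|$, and close with the Poincar\'e inequality for $\rho^0$.

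The one organizational difference worth flagging is that the paper does \emph{not} use $m$-dependent weights $a_m,b_m,c_m$. Instead it fixes $a,b,c$ once and for all (Villani's constants) and rescales the stationarity equation to the form $\cL^{1}\delta^m=(1+\sqrt{m})\mathcal{B}\delta^m-\sqrt{m}\,p\cdot h(r)(\delta^m+1)$, so that the key observation becomes $\langle\cL^1\delta^m,\delta^m\rangle-\langle\mathcal{B}\delta^m,\delta^m\rangle=O(\sqrt{m})$; the smallness then enters through the equation rather than through the norm. This and your $m$-scaled norm are two equivalent bookkeepings of the same estimate. One caveat: the paper's explicit bound comes out as $\|\delta^m\|^2\le C(\hat\eta(m)+\sqrt{m})$ with $\hat\eta(m)=\max(\eta(m),\sqrt{m}/\eta(m))$, optimized at $O(m^{1/4})$, so your asserted rate $\|h^m\|^2=O(m)$ is sharper than what the paper actually extracts; you should check that your choice of $a_m,b_m,c_m$ really delivers this without the $\eta$-tradeoff reappearing in the cross-term estimates.
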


\begin{theorem}\label{T:ConvergenceCellProblems}
Let Conditions \ref{A:Assumption1} and \ref{A:Assumption2} hold and assume that $\sm(q,r)=\sqrt{2\beta\lambda(q)}I, \beta>0$. Then, for every $q\in\mathbb{R}^{d}$, we have
\begin{align*}
\lim_{m\rightarrow0}\left\|\frac{1}{\sqrt{m}}\nabla_{p}\Phi-\frac{1}{\lambda(q)}(I+\nabla_{r}\chi)\right\|_{L^{2}(\mathcal{Y};
\rho^{0})}&=0 \ .
\end{align*}
\end{theorem}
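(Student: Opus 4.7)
The plan is to establish this via a formal two-scale expansion of $\Phi$ in powers of $\sqrt m$, identify $\tfrac{1}{\lambda(q)}(I+\nabla_r\chi)$ as the leading term of $\tfrac{1}{\sqrt m}\nabla_p\Phi$, and then control the remainder by combining the quantitative hypocoercive $L^{2}$ estimates from Section~\ref{S:InvariantMeasures} with the dissipation identity for $\cL^{m}_{q}$. Using $\cL^{m}_{q} = \frac{\lambda(q)}{m}\cA + \frac{1}{\sqrt m}\cB$ and postulating
\begin{equation*}
\Phi(p,r) \;=\; U_{0}(r) + \sqrt m\,U_{1}(p,r) + m\,U_{2}(p,r) + \cdots,
\end{equation*}
the order-$1/m$ equation in $\cL^{m}_{q}\Phi=-\tfrac{1}{\sqrt m}p$ reduces to $\cA U_{0}=0$, forcing $U_{0}$ to depend only on $r$. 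At order $1/\sqrt m$ the identity $\cA p=-p$ gives componentwise $U_{1}^{(k)}(p,r) = \lambda(q)^{-1}\bigl(e_{k}+\nabla_{r}U_{0}^{(k)}\bigr)\cdot p$ modulo a function of $r$, and the Fredholm solvability of the order-$1$ equation $\lambda\cA U_{2}=-\cB U_{1}$ against $\pi(dp)$ reduces to the overdamped cell problem~(\ref{Eq:CellProblemOverdampedCase}); together with the normalization this identifies $U_{0}$ with $\chi$. The crucial observation is then that $\nabla_{p}U_{1} = \lambda(q)^{-1}(I+\nabla_{r}\chi)$ is exactly the target of the theorem, while $\nabla_{p}U_{0}=0$.

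I would then define the approximation $\Phi^{\mathrm{app}} := \chi + \sqrt m\,U_{1}$ and the remainder $R_{m}:=\Phi-\Phi^{\mathrm{app}}$, and check by direct substitution that $\cL^{m}_{q}R_{m} = -F_{m}$, where $F_{m}=\cB U_{1}$ is uniformly bounded in $L^{2}(\rho^{0})$ and has vanishing $\pi$-average in $p$ at every $r$ (the latter being precisely the solvability condition that pinned down $\chi$). After subtracting $\int R_{m}\,d\rho^{m}$ from $R_{m}$ (which does not affect $\nabla_{p}R_{m}$, and which vanishes as $m\downarrow 0$ by the centerings of $\Phi$ against $\rho^{m}$ and of $\chi$ against $\rho_{0}$ combined with Theorem~\ref{T:ConvergenceInvariantMeasures}), the quantitative hypocoercive estimate for $\cL^{m}_{q}$ developed in Section~\ref{S:InvariantMeasures} bounds $\|R_{m}\|_{L^{2}(\rho^{m})}$ by a positive power of $m$.

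Finally, since $\sigma=\sqrt{2\beta\lambda(q)}\,I$, the carr\'e du champ of $\cL^{m}_{q}$ involves only $\nabla_{p}$ and equals $\tfrac{\beta\lambda(q)}{m}|\nabla_{p}u|^{2}$, so invariance of $\rho^{m}$ yields the dissipation identity
\begin{equation*}
\tfrac{\beta\lambda(q)}{m}\,\|\nabla_{p}R_{m}\|_{L^{2}(\rho^{m})}^{2} \;=\; -\!\int\! R_{m}\,\cL^{m}_{q}R_{m}\,d\rho^{m} \;=\; \int\! R_{m}\,F_{m}\,d\rho^{m} \;\le\; \|R_{m}\|_{L^{2}(\rho^{m})}\|F_{m}\|_{L^{2}(\rho^{m})},
\end{equation*}
which, combined with the bound on $R_{m}$ from the previous step, gives $\tfrac{1}{\sqrt m}\|\nabla_{p}R_{m}\|_{L^{2}(\rho^{m})}\to 0$. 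Because $\nabla_{p}\Phi^{\mathrm{app}} = \sqrt m\cdot\lambda(q)^{-1}(I+\nabla_{r}\chi)$ exactly, the identity $\tfrac{1}{\sqrt m}\nabla_{p}\Phi - \tfrac{1}{\lambda(q)}(I+\nabla_{r}\chi) = \tfrac{1}{\sqrt m}\nabla_{p}R_{m}$ then yields convergence in $L^{2}(\rho^{m})$, and the transfer to $L^{2}(\rho^{0})$ follows from Theorem~\ref{T:ConvergenceInvariantMeasures} ($\tilde\rho^{m}\to 1$ in $L^{2}(\rho^{0})$) together with the boundedness of $\lambda(q)^{-1}(I+\nabla_{r}\chi)$. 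The hard part will be the quantitative $L^{2}$ bound on $R_{m}$ with the correct $m$-dependence: plain resolvent estimates for hypoelliptic operators are too weak, and the proof really needs the refined hypocoercivity inner product of Section~\ref{S:InvariantMeasures}, in which the non-antisymmetric contributions arising from $b\ne -\nabla V$ are absorbed into the cross-terms.
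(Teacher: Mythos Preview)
Your strategy is essentially the paper's: formal $\sqrt m$-expansion of $\Phi$, identification of the leading terms via Fredholm solvability against $\pi(dp)$ (which pins down $\chi$), and then control of the remainder by a dissipation identity. Two differences are worth flagging.

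First, the paper carries the expansion one order further. It writes $\Psi_\ell=\Phi_\ell-\tfrac{\sqrt m}{\lambda}p_\ell$ and expands $\Psi_\ell=\chi_\ell+\sqrt m\,\Psi_{\ell,1}+m\,\Psi_{\ell,2}+\Psi^m_{\ell,3}$, so that the remainder solves $\cL^m_q\Psi^m_{\ell,3}=-\sqrt m\,\cB\Psi_{\ell,2}$ with an $O(\sqrt m)$ right-hand side, whereas your $R_m$ solves $\cL^m_q R_m=-\cB U_1$ with an $O(1)$ right-hand side. This is not fatal (the same machinery would still give $\tfrac1{\sqrt m}\|\nabla_p R_m\|\to 0$), but the extra order buys the cleaner bounds $\|\Psi^m_3\|_{L^2(\rho^0)}=O(m^{3/2})$ and $\tfrac1{\sqrt m}\|\nabla_p\Psi^m_3\|_{L^2(\rho^0)}=O(m)$ in Lemma~\ref{mZeroL2bound1}.

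Second, and this is the real gap in your plan, the tool you invoke for the $L^2$ bound on the remainder is not what the paper provides. Section~\ref{S:InvariantMeasures} establishes only $\tilde\rho^m\to 1$ in $L^2(\rho^0)$; it does \emph{not} furnish a general $m$-quantitative resolvent estimate for $\cL^m_q$, and the $((\cdot,\cdot))$ inner product there is built around the specific equation satisfied by $\delta^m$, not around $\cL^m_q u=f$. The paper instead combines the $\rho^0$-dissipation identity of Lemma~\ref{L:L2_Bounds1} with a Poincar\'e-type inequality involving only $\nabla_p$ (Lemma~\ref{mZeroPoincareInequality}, deduced from the spectral gap of $\cL^1_q$) to obtain Lemma~\ref{mZeroL2bound1}. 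It then needs an additional $L^4$ estimate on $\Psi^m_3$ (Lemma~\ref{mZeroL4bound}) to pass from $L^2(\rho^0)$ to $L^2(\rho^m)$, after which the clean $\rho^m$-dissipation identity (Lemma~\ref{mNonZeroIntegrationByParts}) closes the argument. Your sketch correctly locates the ``hard part,'' but misidentifies its resolution: Section~\ref{S:InvariantMeasures} enters only through Theorem~\ref{T:ConvergenceInvariantMeasures}, while the remainder bound rests on the Poincar\'e inequality of Lemma~\ref{mZeroPoincareInequality} and the auxiliary $L^4$ control.
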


Using then Theorems \ref{T:ConvergenceInvariantMeasures} and \ref{T:ConvergenceCellProblems} we prove in Section \ref{S:RateFunctions} that the
rate function $S_{m}(\phi)$ converges $S_{0}(\phi)$,  as $m\downarrow 0$.
\begin{theorem}
\label{T:MainTheorem4} Let Conditions \ref{A:Assumption1} and \ref{A:Assumption2} hold and assume that $\sm(q,r)=\sqrt{2\beta\lambda(q)}I, \beta>0$. Then, we have $$\lim\li_{m\ra 0}S_m(\phi)=S_0(\phi) \
.$$
\end{theorem}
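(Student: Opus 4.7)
First I would reduce to the nontrivial case: if $\phi\notin\mathcal{AC}([0,T];\R^d)$ or $\phi_0\neq q_o$, then both $S_m(\phi)$ and $S_0(\phi)$ equal $+\infty$, so assume $\phi\in\mathcal{AC}([0,T];\R^d)$ with $\phi_0=q_o$. The heart of the argument is to establish, for each fixed $q\in\R^d$, the pointwise convergences
\[
r_m(q)\to r_0(q),\qquad Q_m(q)\to Q_0(q),\qquad m\da 0,
\]
and then to upgrade the second one to $Q_m^{-1}(q)\to Q_0^{-1}(q)$.

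To prove $r_m(q)\to r_0(q)$, I would use the factorization $\mu(dpdr|q)=\tilde\rho^m(p,r|q)\rho^0(p,r|q)\,dpdr$ with $\rho^0=\rho^{\text{OU}}(p)\rho_0(r|q)$ to rewrite
\[
r_m(q)=\Bigl\langle \tfrac{\nabla_p\Phi}{\sqrt m},\; c(q,\cdot)\tilde\rho^m\Bigr\rangle_{L^2(\mathcal Y;\rho^0)}.
\]
Theorem \ref{T:ConvergenceCellProblems} gives $\nabla_p\Phi/\sqrt m\to (I+\nabla_r\chi)/\lambda(q)$ in $L^2(\rho^0)$, and Theorem \ref{T:ConvergenceInvariantMeasures} together with the boundedness of $c$ from Condition \ref{A:Assumption1} gives $c(q,\cdot)\tilde\rho^m\to c(q,\cdot)$ in $L^2(\rho^0)$. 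Bilinearity of the inner product then yields
\[
r_m(q)\to\int_{\mathcal Y}\tfrac{1}{\lambda(q)}(I+\nabla_r\chi(q,r))\,c(q,r)\,\rho^0(p,r|q)\,dpdr=r_0(q),
\]
where the final equality holds because the integrand is $p$-independent and $\rho^{\text{OU}}$ integrates to one. An entirely analogous quadratic version of this argument, expanding $Q_m(q)$ into cross terms and exploiting boundedness of $\alpha$, gives $Q_m(q)\to Q_0(q)$. Since $\alpha$ is uniformly non-degenerate (Condition \ref{A:Assumption1}) and $(I+\nabla_r\chi)$ is non-degenerate in the standard homogenization sense, $Q_0(q)$ is positive definite, and so $Q_m^{-1}(q)\to Q_0^{-1}(q)$ for $m$ sufficiently small.

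To pass to the limit in the time integral, I would combine Fatou's lemma for the $\liminf$ bound with dominated convergence for the $\limsup$ bound. The integrand of $S_m$ is nonnegative, so by Fatou and the pointwise convergence above,
\[
\liminf_{m\da 0}S_m(\phi)\geq S_0(\phi).
\]
If $S_0(\phi)=\infty$, this alone yields $S_m(\phi)\to\infty=S_0(\phi)$. If $S_0(\phi)<\infty$, then positive definiteness of $Q_0$ along the compact curve $\phi([0,T])$ forces $\dot\phi\in L^2([0,T])$; in that case I would dominate the integrand by $C(|\dot\phi_s|^2+1)$ uniformly in $m$ small and apply dominated convergence to obtain $\limsup_{m\da 0}S_m(\phi)\leq S_0(\phi)$.

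The main obstacle is obtaining the $m$-uniform upper bounds on $|r_m(q)|$, $\|Q_m(q)\|$, and especially $\|Q_m^{-1}(q)\|$ along $\phi([0,T])$ needed to produce an integrable dominant. This requires uniformity in $q$, over compacts, of the $L^2$ convergences in Theorems \ref{T:ConvergenceInvariantMeasures} and \ref{T:ConvergenceCellProblems}. I expect this to follow from the smoothness and global bounds on the coefficients in $q$ imposed in Condition \ref{A:Assumption1} together with the explicit tracking of the $q$-dependence in the hypocoercivity estimates of Section \ref{S:InvariantMeasures}, but it is the delicate technical point on which the argument hinges.
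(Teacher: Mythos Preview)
Your approach is essentially the paper's: reduce to pointwise convergence $r_m(q)\to r_0(q)$ and $Q_m(q)\to Q_0(q)$ using Theorems~\ref{T:ConvergenceInvariantMeasures} and~\ref{T:ConvergenceCellProblems}, then pass to the limit in the time integral. The paper packages the first step as Lemma~\ref{L:ConvergenceOfQandr}, which in fact asserts \emph{uniform} convergence in $q$ (this is the issue you flag at the end; the paper obtains it because the hypocoercivity constants in Section~\ref{S:InvariantMeasures} depend only on $\sup_r|h(r)|$ and $\sup_r|\nabla_r h(r)|$, which are uniform in $q$ by Condition~\ref{A:Assumption1}). Given that uniformity, the paper simply says the theorem ``follows directly''; your treatment of the time integral via Fatou and dominated convergence is more explicit than what the paper writes.

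There is one technical point where your argument is looser than the paper's. For $r_m(q)$ your bilinear-form argument works because $c$ is bounded, so $c\tilde\rho^m\to c$ in $L^2(\rho^0)$. But for $Q_m(q)$ the ``entirely analogous quadratic version'' is not immediate from $L^2(\rho^0)$ convergence alone: writing $A_m=\tfrac{1}{\sqrt m}\nabla_p\Phi$, the cross term $\int A_m A_m^T(\tilde\rho^m-1)\rho^0$ cannot be controlled by $\|A_m\|_{L^2(\rho^0)}$ and $\|\tilde\rho^m-1\|_{L^2(\rho^0)}$ without an $L^4$ bound on $A_m$. The paper sidesteps this by decomposing $Q_m-Q_0$ differently and bounding the first piece with Cauchy--Schwarz in $L^2(\rho^m)$ rather than $L^2(\rho^0)$; the needed convergence $\|A_m-A_0\|_{L^2(\rho^m)}\to 0$ is exactly what Section~\ref{S:CellProblems} establishes (see~(\ref{Eq:RelationToShow1}) and the proof of~(\ref{Eq:RelationToShow1a2})), going slightly beyond the statement of Theorem~\ref{T:ConvergenceCellProblems}. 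You should either invoke that stronger $L^2(\rho^m)$ convergence or supply an $L^4(\rho^0)$ bound on $A_m$ (the asymptotic expansion in Section~\ref{S:CellProblems}, together with smoothness of $\Psi_2$ and Lemma~\ref{mZeroL4bound}, makes this feasible).
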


\begin{remark}\label{R:GeneralDiffusionCoeff}
We believe that Theorems \ref{T:ConvergenceInvariantMeasures} and \ref{T:ConvergenceCellProblems} and as a consequence Theorem \ref{T:MainTheorem4} are true under
more general variable diffusion coefficients as long as Condition \ref{A:Assumption1} holds. When, the diffusion coefficient $\sigma$ is not a
multiple of the identity matrix, then the operator $\mathcal{A}$ is not the classical Ornstein-Uhlenbeck that has the Gaussian measure $\rho^{\text{OU}}(p)dp\sim e^{-\frac{|p|^{2}}{2\beta}}dp$ as its invariant measure.
Some of our technical lemmas use this explicit structure  in order to derive the necessary estimates.
However, since the spirit of the proof does not rely on this  structure, we believe that this is only a technical problem.
\end{remark}

\section{Law of large numbers}\label{S:LLN}
In this section we study the limiting behavior of the solution to the control
problem (\ref{Eq:SecondOrderControlSystem}). It turns out that  we need
to consider the solution to (\ref{Eq:SecondOrderControlSystem}) together with
an appropriate occupation measure and then consider the limit of the pair. Let us be more specific now.

Let $u^\ve_\bullet\in \cA$ and $\Xbar_s^\ve$ solves (\ref{Eq:SecondOrderControlSystem}) with $u^\ve$
in place of $u$. Let the control space be $\cZ=\R^d$ and the fast variable
space be $\cY=\R^d\times \T^d$. We see that the fast variable is
actually $\left(\pbar_s^\ve, \dfrac{\qbar_s^\ve}{\dt}\right)$. Let
$A\subseteq \cZ$, $B_1\times B_2\subseteq \cY$ and $\Gm\subseteq
[0,T]$. Let $\Dt=\Dt(\ve)>0$ be a separation of scales parameter. We
introduce the occupation measure
\begin{align}
\occupation^{\ve,\Dt}(A\times B_1\times B_2 \times \Gm)&=
\int_\Gm\left[\dfrac{1}{\Dt}\int_t^{t+\Dt}\1_A(u_s^\ve)\1_{B_1}(\pbar_s^\ve)\1_{B_2}\left(\dfrac{\qbar_s^\ve}{\dt}
\text{ mod } 1\right)ds\right]dt \ .\label{Eq:OccupationMeasure}
\end{align}

Let  us define the function
\begin{align}
\gamma(q, (p,r),z)&=\dfrac{1}{\sqrt{m}}\left[c(q,r)+\sigma(q,r)z\right]\cdot\grad_{p}\Phi(p,r)\ .  \label{Eq:AveragedIntegrant}
\end{align}
%In the above formula for $\gamma$ we are understanding
%$\dfrac{1}{\sqrt{m}}\left[c(q,r)+\sigma(q,r)z\right]$ as a row
%vector and $\grad_p\Phi=(\grad_p\Phi_1,...,\grad_p\Phi_d)$ where
%$\grad_p\Phi_k$, $k=1,...,d$ are column vectors.

Definition \ref{Def:ViablePair} captures the notion of a viable pair as introduced in \cite{DupuisSpiliopoulos} which characterizes the required law of large numbers.
\begin{definition}\label{Def:ViablePair}
A pair $(\psi, \occupation)\in \cC([0,T]; \R^d)\times
\mathcal{P}(\cZ\times \cY\times[0,T])$ will be called viable with
respect to $(\gamma, \cL^{m}_q)$ or simply viable if there is no
confusion, if the following are satisfied. The function $\psi_t$ is
absolutely continuous, $\occupation$ is square integrable in the
sense that
\begin{equation}
\int_{\cZ\times\cY\times[0,T]}|z|^2\occupation(dz, dpdr, ds)<\infty
\ , \label{Eq:ViablePairRequirement1}
\end{equation}
 and
\begin{enumerate}
 \item{
 \begin{equation}
\psi_t=q_o+\int_0^t \int_{\cZ\times \cY}\gamma(\psi_s, (p,r),
z)\occupation(dz,dpdr,ds)  \ ;  \label{Eq:ViablePairRequirement2}
 \end{equation}}
\item{For any $g(p,r)\in \cD(\cL^{m}_q)$,
\begin{equation}
 \int_0^t\int_{\cZ\times \cY}\cL^{m}_{\psi_s}g(p,r)\occupation(dz,dpdr,ds)=0 \ ; \label{Eq:ViablePairRequirement3}
\end{equation}}
\item{\begin{equation}
       \occupation(\cZ\times \cY\times [0,t])=t  \ . \label{Eq:ViablePairRequirement4}
      \end{equation}}
\end{enumerate}
We write $(\psi, \occupation)\in \cV_{(\gamma, \cL_q)}$.
\end{definition}

\begin{theorem}\label{T:LLN}
Consider any family
$\{u^\ve, \ve>0\}$ of controls in $\cA$ satisfying
$$\sup\li_{\ve>0}\E\int_0^T|u_t^\ve|^2dt<\infty \ .$$
Let Conditions \ref{A:Assumption1} and \ref{A:Assumption2} be satisfied. Then the family
$\{(\qbar_\bullet^\ve, \occupation^{\ve,\Dt}), \ve>0\}$ is tight.
Hence, given any subsequence of $\{(\qbar_\bullet^\ve,
\occupation^{\ve,\Dt}), \ve>0\}$, there exists a subsequence that
converges in distribution with limit $(\qbar_\bullet, \occupation)$.
With probability $1$, the accumulation point $(\qbar_{\bullet},
\occupation)$ is a viable pair with respect to $(\gamma, \cL_q)$:
$(\qbar_\bullet, \occupation)\in \cV_{(\gamma, \cL_q)}$.
\end{theorem}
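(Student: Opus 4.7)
\medskip

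\textbf{Overall strategy.} The proof proceeds in two stages: first establishing tightness of $\{(\qbar^\ve_\bullet, \occupation^{\ve,\Dt})\}$, then identifying each limit point as a viable pair by verifying the three conditions of Definition \ref{Def:ViablePair}. The algebraic backbone of both stages is the cell function $\Phi$ solving (\ref{Eq:HypoellipticCellProblem}). Applying It\^{o}'s formula to $\Phi(\pbar^\ve_t, \qbar^\ve_t/\delta)$ and using $\cL^m_q\Phi = -p/\sqrt{m}$ together with the identity $\dot{\qbar}^\ve = \frac{1}{\sqrt{m}}\frac{\ve}{\delta}\pbar^\ve$, a direct computation (carefully collecting terms at scales $\ve/\delta^2$, $1/\delta$ and $\sqrt{\ve}/\delta$) yields the key identity
\begin{equation*}
\qbar^\ve_t-q_o=\int_0^t \gamma\bigl(\qbar^\ve_s,(\pbar^\ve_s,\qbar^\ve_s/\delta),u^\ve_s\bigr)\,ds+\frac{\ve}{\sqrt{m}}\!\int_0^t\!\nabla_q\Phi\cdot\pbar^\ve_s\,ds+\frac{\sqrt{\ve}}{\sqrt{m}}\!\int_0^t\!(\nabla_p\Phi)\sigma\,dW_s-\delta\bigl[\Phi\bigr]_0^t.
\end{equation*}
The last three terms are the remainders whose vanishing (in probability) must be established in the regime $\delta\ll\ve$.

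\medskip

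\textbf{Tightness.} First I would establish a uniform-in-$\ve$ moment bound $\sup_{\ve>0}\E\sup_{t\le T}|\pbar^\ve_t|^{2k}\le C_k$. The friction term $-\frac{\lambda(\qbar^\ve)}{m}\frac{\ve}{\delta^2}\pbar^\ve$ provides exponential damping on the fast timescale; applying It\^{o} to $V(p)=|p|^{2k}$ and using $\underline\lambda\le\lambda(\cdot)\le\overline\lambda$, together with the $L^2$-bound on $u^\ve$ and standard Gronwall reasoning in the Ornstein--Uhlenbeck-type spirit, yields the required estimate (this is also what permits the initial condition $p_o$ to be random under the exponential moment hypothesis). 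Using polynomial growth estimates on $\Phi$ and $\nabla\Phi$ inherited from \cite{HairerPavliotis2004} (recalled in Appendix \ref{S:AppendixA}), the remainder terms in the key identity are $o_{L^1}(1)$, and $\qbar^\ve$ acquires a uniform modulus of continuity through the $\gamma$-integral (using Cauchy--Schwarz and the control bound). Tightness of $\{\qbar^\ve_\bullet\}$ in $\contfunc([0,T];\R^d)$ follows. For $\occupation^{\ve,\Dt}$, it suffices to verify tightness of the marginals on the non-compact factors: the $\cZ=\R^d$ marginal is tight by $\sup_\ve\E\int_0^T|u^\ve_s|^2ds<\infty$, the torus component of $\cY$ is automatically tight, and the $\R^d$ momentum component is controlled by the $\pbar^\ve$ moment bound. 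The time marginal equals Lebesgue on $[0,T]$ by construction.

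\medskip

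\textbf{Identification of the limit.} Conditions (\ref{Eq:ViablePairRequirement1}) and (\ref{Eq:ViablePairRequirement4}) are immediate: the former from the $L^2$ control bound passed through Fatou, the latter from the definition of $\occupation^{\ve,\Dt}$. For (\ref{Eq:ViablePairRequirement2}), one rewrites $\int_0^t\gamma(\qbar^\ve_s,\cdot,u^\ve_s)ds$ in terms of $\occupation^{\ve,\Dt}$ by averaging over the window of width $\Dt$ and using continuity of $\qbar^\ve$ in time to replace $\qbar^\ve_s$ by $\qbar^\ve_{\lfloor s\rfloor_\Dt}$, then the key identity combined with Skorokhod representation gives (\ref{Eq:ViablePairRequirement2}) in the limit. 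For (\ref{Eq:ViablePairRequirement3}), given $g\in\cD(\cL^m_q)$, It\^{o}'s formula on $g(\pbar^\ve,\qbar^\ve/\delta)$ yields, after multiplying by $\delta^2/\ve$,
\begin{equation*}
\tfrac{\delta^2}{\ve}\bigl[g\bigr]_0^t=\int_0^t \cL^m_{\qbar^\ve_s}g(\pbar^\ve_s,\qbar^\ve_s/\delta)\,ds+\tfrac{\delta}{\sqrt{m}\,\ve}\!\int_0^t\!(c+\sigma u^\ve_s)\!\cdot\!\nabla_p g\,ds+\tfrac{\delta}{\sqrt{m\ve}}\!\int_0^t\!(\nabla_p g)\sigma\,dW_s.
\end{equation*}
The LHS is $O(\delta^2/\ve)$, the correction term is $O(\delta/\ve)$ in $L^1$-norm (using the control bound and Cauchy--Schwarz), and the martingale has quadratic variation $O(\delta^2/\ve)$. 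All three vanish as $\ve\downarrow 0$, so the $\cL^m_{\qbar^\ve_s}g$-integral tends to zero; passing to the limit through the occupation measure yields (\ref{Eq:ViablePairRequirement3}).

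\medskip

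\textbf{Main obstacle.} The delicate point is the interplay between the moment bounds on $\pbar^\ve$ and the growth of the cell function $\Phi$. Since $\cY$ is non-compact in the $p$-direction, $\Phi$ and $\nabla_p\Phi$ can grow polynomially, and establishing uniform integrability of $\gamma(\qbar^\ve_s,(\pbar^\ve_s,\qbar^\ve_s/\delta),u^\ve_s)$ against the occupation measures requires matching this growth with sharp enough moments of $\pbar^\ve$ that are uniform in $\ve$ \emph{and} independent of the strong time-scaling $\ve/(m\delta^2)$ present in the friction. The Lyapunov approach works because this scaling enhances rather than damages the dissipation, but the estimate must be carried out on the momentum equation directly (not via the joint generator $\cL$), and one must exploit the $L^2$-bound on $u^\ve$ through a Young-type splitting when handling the $\sigma u^\ve/(\sqrt{m}\,\delta)$ input in $d\pbar^\ve$. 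The remaining arguments are standard once these uniform moment bounds are in place.
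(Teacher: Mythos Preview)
Your proposal is correct and follows essentially the same route as the paper: the representation of $\qbar^\ve$ via It\^{o} applied to the cell function $\Phi$, tightness from the Hairer--Pavliotis growth bounds on $\Phi,\nabla_p\Phi$ combined with the $L^2$ control bound, and the identification of (\ref{Eq:ViablePairRequirement3}) by multiplying It\^{o}'s formula for a test function $g(p,r)$ by $\delta^2/\ve$ all match the paper's Parts~1--5. Your key identity is in fact slightly more careful than the paper's, which silently suppresses the $\frac{\ve}{\sqrt{m}}\int_0^t\nabla_q\Phi\cdot\pbar^\ve_s\,ds$ term arising from the $q$-dependence of $\Phi$; this term is harmless (order $\ve$) but its presence is worth noting.
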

\begin{proof}[Proof of Theorem \ref{T:LLN}]
%The proof of this theorem follows the logic of the proof of Theorem
%2.8 in \cite{DupuisSpiliopoulos}. Here, we recall the main steps emphasizing the differences.

Part 1. [Tightness]. For a smooth function $g\left(\pbar_t^\ve,
\dfrac{\qbar_t^\ve}{\dt}\right)$ we can apply It\^{o}'s formula and
get
\begin{align}
g\left(\pbar_t^\ve,
\dfrac{\qbar_t^\ve}{\dt}\right)-g\left(\pbar_o^\ve,\dfrac{\qbar_o^\ve}{\dt}\right)
&=\dfrac{\ve}{\dt^2}\int_0^t
\cL^{m}_{\qbar_s^\ve}g\left(\pbar_s^\ve,
\dfrac{\qbar_s^\ve}{\dt}\right)ds\nonumber\\
&+\dfrac{1}{\dt}\dfrac{1}{\sqrt{m}}\int_0^t
\left[c\left(\qbar_s^\ve,\dfrac{\qbar_s^\ve}{\dt}\right)+\sigma\left(\qbar_s^\ve,
\dfrac{\qbar_s^\ve}{\dt}\right)u_s\right]\cdot
\grad_pg\left(\pbar_s^\ve, \dfrac{\qbar_s^\ve}{\dt}\right)ds
\nonumber\\
& +\dfrac{\sqrt{\ve}}{\dt}\frac{1}{\sqrt{m}}\int_0^t \grad_p g\left(\pbar_s^\ve,
\dfrac{\qbar_s^\ve}{\dt}\right)\cdot \sigma\left(\qbar_s^\ve,
\dfrac{\qbar_s^\ve}{\dt}\right) dW_s \ .\label{Eq:ItoFormulaToControlSystem}
\end{align}

Let us apply It\^{o}'s formula to $\Phi\left(\pbar_t^\ve,
\dfrac{\qbar_t^\ve}{\dt}\right)$ in (\ref{Eq:HypoellipticCellProblem}) and we use (\ref{Eq:ItoFormulaToControlSystem}) to get a
representation formula for $\qbar_t^\ve$ as follows:
\begin{align*}
\qbar_t^\ve&=q_o+\int_0^t \dfrac{1}{\sqrt{m}}\left[
c\left(\qbar_s^\ve,\dfrac{\qbar_s^\ve}{\dt}\right)+\sigma\left(\qbar_s^\ve,\dfrac{\qbar_s^\ve}{\dt}\right) u_s\right]\cdot\grad_p\Phi\left(\pbar_s^\ve,
\dfrac{\qbar_s^\ve}{\dt}\right)ds\nonumber\\
& -\dt\left[\Phi\left(\pbar_t^\ve,
\dfrac{\qbar_t^\ve}{\dt}\right)-\Phi\left(\pbar_o^\ve,
\dfrac{\qbar_o^\ve}{\dt}\right)\right]+\frac{\sqrt{\ve}}{\sqrt{m}}\int_0^t
\grad_p \Phi\left(\pbar_s^\ve, \dfrac{\qbar_s^\ve}{\dt}\right)\cdot \sigma\left(\qbar_s^\ve,\dfrac{\qbar_s^\ve}{\dt}\right)dW_s \
.\nonumber
\end{align*}

Using this representation formula, Condition \ref{A:Assumption1} and Theorem 3.3
of \cite{HairerPavliotis2004} (see also Appendix \ref{S:AppendixA}), we can then establish that
for every $\eta>0$
\[
\lim_{\rho\downarrow0}\limsup_{\varepsilon\downarrow0}\mathbb{P}\left[
\sup_{|t_{1}-t_{2}|<\rho,0\leq t_{1}<t_{2}\leq1}|\bar{q}_{t_{1}}^{\varepsilon
}-\bar{q}_{t_{2}}^{\varepsilon}|\geq\eta\right]  =0.
\]

This implies the tightness of the family $\{\qbar_\bullet^\ve\}$.
Tightness of the occupation measures $\{\mathrm{P}%
^{\varepsilon,\Delta},\varepsilon>0\}$ follows from the bound
\begin{equation}
\sup_{\varepsilon\in(0,1]}\E\left[  g(\mathrm{P}^{\varepsilon
,\Delta})\right]= \sup_{\varepsilon\in(0,1]}\E\int_{0}^{T}\frac{1}{\Delta}%
\int_{t}^{t+\Delta}| u^{\ve}(s)|^{2}dsdt
<\infty.\label{Eq:TightnessP}
\end{equation}
for the tightness function
$g(r)=\int_{\mathcal{Z}\times\mathcal{Y}\times\lbrack0,T]}| z|
^{2}r(dz,dpdr,dt),\hspace{0.2cm}r\in\mathcal{P}(\mathcal{Z}\times\mathcal{Y}\times\lbrack0,T])$,
see Theorem A.19 in \cite{DupuisEllis}. Notice that the last inequality in
(\ref{Eq:TightnessP}) follows by the uniform $L^{2}$ bound on the
family of controls $\{u^{\ve},\ve>0\}$.

Hence, the family $\{(\bar{q}^{\varepsilon},\mathrm{P}^{\varepsilon,\Delta}),\epsilon>0\}$ is tight.
Due to tightness, for any
subsequence of $\varepsilon>0$ there exists subsubsequence
that converges, in distribution, to some limit
$(\bar{q},\mathrm{P})$ such that
\[
(\bar{q}^{\epsilon},\mathrm{P}^{\epsilon,\Delta})\rightarrow(\bar
{q},\mathrm{P}) \ .
\]
Next, we prove that any accumulation point will be a viable pair according to Definition \ref{Def:ViablePair}.
\

Part 2. [Proof of (\ref{Eq:ViablePairRequirement1})].  By Fatou's
Lemma we have
\begin{equation*}
\E\int_{\mathcal{Z}\times\mathcal{Y}\times[ 0,T]}| z|
^{2}\mathrm{P}(dz,dpdr,dt)<\infty \ ,
\label{A:Assumption2_1}%
\end{equation*}
which then implies that $\int_{\mathcal{Z}\times\mathcal{Y}\times[0,
T]}| z| ^{2}\mathrm{P}(dz,dpdr,dt)<\infty$ \ w.p.1.

\

Part 3. [Proof of (\ref{Eq:ViablePairRequirement2})]. Consider a
test function $f=f(q)$ on $\R^d$. Let $\Psi(p,r)=\Phi(p,r)\cdot
\grad_q f(q)$ which satisfies the cell problem
$$\cL^{m}_q \Psi(p,r)=-\dfrac{1}{\sqrt{m}}p\cdot \grad_q f(q) \ .$$

Making use of (\ref{Eq:HypoellipticCellProblem}) and (\ref{Eq:ItoFormulaToControlSystem}) we get
\begin{align}
\Psi\left(\pbar_t^\ve,
\dfrac{\qbar_t^\ve}{\dt}\right)-\Psi\left(\pbar_o^\ve,\dfrac{\qbar_o^\ve}{\dt}\right)
&= -\dfrac{\ve}{\dt^2}\int_0^t
\dfrac{1}{\sqrt{m}}\pbar_s^\ve\cdot\grad_qf(\qbar_s^\ve)ds\nonumber\\
&\quad+\dfrac{1}{\dt}\int_0^t
\dfrac{1}{\sqrt{m}}\left[c\left(\qbar_s^\ve,
\dfrac{\qbar_s^\ve}{\dt}\right)+\sigma\left(\qbar_s^\ve,
\dfrac{\qbar_s^\ve}{\dt}\right)u_s\right]\cdot
\grad_p\Psi\left(\pbar_s^\ve, \dfrac{\qbar_s^\ve}{\dt}\right)ds
\nonumber\\
&\quad+\dfrac{\sqrt{\ve}}{\dt}\frac{1}{\sqrt{m}}\int_0^t \grad_p\Psi\left(\pbar_s^\ve,
\dfrac{\qbar_s^\ve}{\dt}\right)\cdot \sigma\left(\qbar_s^\ve,
\dfrac{\qbar_s^\ve}{\dt}\right)dW_s \ .\label{Eq:ItoFormulaWithHypoellipticCellProblem}
\end{align}

Let us now choose $S,\tau\geq 0$ such that $S\leq S+\tau\leq T$. We have
\begin{align*}
f(\qbar_{S+\tau}^\ve)-f(\qbar_S^\ve)&=\int_S^{S+\tau}\dfrac{1}{\sqrt{m}}
\dfrac{\ve}{\dt}\pbar_t^\ve\cdot\grad_q f(\qbar_t^\ve)dt \ .
\end{align*}

Combining the latter expression with  (\ref{Eq:ItoFormulaWithHypoellipticCellProblem}) we get
\begin{align*}
&f(\qbar_{S+\tau}^\ve)-f(\qbar_S^\ve)-\int_S^{S+\tau}\gamma\left(\qbar_t^\ve,\left(\pbar_t^\ve, \dfrac{\qbar_t^\ve}{\dt}\right),u_t\right)\cdot\grad_q f(\qbar_t^\ve)dt\nonumber\\
&\quad =-\delta \left(\Psi\left(\pbar_t^\ve,
\dfrac{\qbar_t^\ve}{\dt}\right)-\Psi\left(\pbar_o^\ve,\dfrac{\qbar_o^\ve}{\dt}\right)\right)+\sqrt{\ve}\frac{1}{\sqrt{m}}\int_0^t \grad_p\Psi\left(\pbar_s^\ve,
\dfrac{\qbar_s^\ve}{\dt}\right)\cdot \sigma\left(\qbar_s^\ve,
\dfrac{\qbar_s^\ve}{\dt}\right)dW_s \ .
\end{align*}

Due to the a-priori bounds from Appendix \ref{S:AppendixA} the right hand side of the last display goes to zero in $L^{2}$, which means that
\begin{align*}
 &\left|f(\qbar_{S+\tau}^\ve)-f(\qbar_S^\ve)-\int_S^{S+\tau}\gamma\left(\qbar_t^\ve,
\left(\pbar_t^\ve, \dfrac{\qbar_t^\ve}{\dt}\right),
u_t\right)\cdot\grad_q f(\qbar_t^\ve)dt\right|\ra 0 %\label{Eq:OperatorConvergence1}
\end{align*}
 as $\ve\da 0$ in means square sense. By Condition \ref{A:Assumption1},  Lemma 3.2 of \cite{DupuisSpiliopoulos}
 guarantees that
\begin{align*}
&\left|\int_S^{S+\tau}\gamma\left(\qbar_t^\ve, \left(\pbar_t^\ve,
\dfrac{\qbar_t^\ve}{\dt}\right), u_t\right)\cdot\grad_q
f(\qbar_t^\ve)dt\right.
\nonumber\\
&\left.-\int_{\cZ\times\cY\times [S,
S+\tau]}\gamma(\qbar_t^\ve, (p,r), z)\cdot \grad_q
f(\qbar_t^\ve)\occupation^{\ve,\Dt}(dz,dpdr,dt)\right|\ra 0%\label{Eq:OperatorConvergence2}
\end{align*}
and
\begin{align*}
&\left|\int_{\cZ\times\cY\times [S, S+\tau]}\gamma(\qbar_t^\ve,
(p,r), z)\cdot \grad_q
f(\qbar_t^\ve)\occupation^{\ve,\Dt}(dz,dpdr,dt)\right.
\nonumber\\
 &\left.-\int_{\cZ\times\cY\times [S, S+\tau]}\gamma(\qbar_t,
(p,r), z)\cdot \grad_q f(\qbar_t)\occupation(dz,dpdr,dt)\right|\ra 0
%\label{Eq:OperatorConvergence3}
\end{align*}
as $\ve\da 0$. Therefore, by defining
\begin{equation*}\bar{\mathcal{A}}_{t}^{\varepsilon,\Delta}f(q)=
\int_{\mathcal{Z}\times\mathcal{Y}}\gamma(q,(p,r),z)\nabla
f(q)\mathrm{P}_{t}^{\epsilon,\Delta}(dz,dpdr) \ ,
\label{Eq:PrelimitOperator}%
\end{equation*}
where
\[
\mathrm{P}_{t}^{\varepsilon,\Delta}(dz,dpdr)=\frac{1}{\Delta}\int_{t}^{t+\Delta
}1_{dz}(u^{\varepsilon}_{s})1_{dp}\left(
\bar{p}^{\varepsilon}_{s}\right) 1_{dr}\left(
\frac{\bar{q}^{\varepsilon}_{s}}{\delta}\text{ mod } 1\right)  ds \
,
\]
we get that, as $\varepsilon\downarrow0$,
\begin{equation}
\E\left[  f(\bar{q}^{\varepsilon}_{S+\tau})-f(\bar{q}^{\varepsilon}_{S})-\int_{S}^{S+\tau}%
\bar{\mathcal{A}}_{t}^{\varepsilon,\Delta}f(\bar{q}^{\varepsilon}_{t})dt\right]
  \rightarrow0  \ , \label{Eq:MartingaleProblemRegime1_1}%
\end{equation}
and, in probability,
\begin{equation}
\int_{S}^{S+\tau}\bar{\mathcal{A}}_{s}^{\epsilon,\Delta}f(\bar{q}^{\varepsilon}_{s})ds-\int_{\mathcal{Z}\times\mathcal{Y}\times\lbrack S,S+\tau
]}\gamma(\bar{q}_{s},(p,r),z)\nabla f(\bar{q}_{s})\mathrm{P}(dz,dpdr,ds)\rightarrow0.
\label{Eq:MartingaleProblemRegime1_2a}%
\end{equation}

Relations (\ref{Eq:MartingaleProblemRegime1_1}) and (\ref{Eq:MartingaleProblemRegime1_2a}) imply that  the pair $(\bar{q},\mathrm{P})$ solves the martingale problem associated with  (\ref{Eq:ViablePairRequirement2}), which then proves that (\ref{Eq:ViablePairRequirement2}) holds.
%Combining (\ref{Eq:OperatorConvergence1}), (\ref{Eq:OperatorConvergence2}) and (\ref{Eq:OperatorConvergence3}) we showed (\ref{Eq:ViablePairRequirement2}).

\

Part 4. [Proof of (\ref{Eq:ViablePairRequirement3})]. %The proof of (\ref{Eq:ViablePairRequirement3}) is along the same lines as in the proof of
%\cite[(2.5) in Theorem 2.8]{DupuisSpiliopoulos}.
For functions $f\in C^{2}(\mathcal{Y})$, let us introduce the auxiliary operator
$$\cA_{z,q}^\ve f(p,r)=\dfrac{\ve}{\dt^2}\cL^{m}_q f(p,r)+
\dfrac{1}{\dt}\dfrac{1}{\sqrt{m}}\left[c(q,r)+\sigma(q,r)z\right]\cdot
\grad_p f(p,r) \ ,$$
and define the $\cF_t$--martingale
\begin{align*}
M_t^\ve&=f\left(\pbar_t^\ve, \dfrac{\qbar_t^\ve}{\dt}\right)-
f\left(\pbar_o^\ve, \dfrac{\qbar_o^\ve}{\dt}\right)-\int_0^t
\cA_{u_s^\ve,\qbar_s^\ve}^\ve f\left(\pbar_s^\ve,
\dfrac{\qbar_s^\ve}{\dt}\right)ds\nonumber\\
&=\dfrac{\sqrt{\ve}}{\dt}\frac{1}{\sqrt{m}}\int_0^t
\grad_p f\left(\pbar_s^\ve,\dfrac{\qbar_s^\ve}{\dt}\right)\sigma\left(\qbar_s^\ve,\dfrac{\qbar_s^\ve}{\dt}\right)dW_s \ .
\end{align*}

Let us furthermore set $\mathcal{G}_{q,z}^\ve f(p,r)=\dfrac{1}{\sqrt{m}}\left[c(q,r)+\sigma(q,r)z\right]\cdot\grad_p f(p,r)$ and define $g(\ve)=\frac{\delta^{2}}{\ve}$. Then, we have that
\begin{align}
& g(\varepsilon)M_{t}^{\varepsilon}-g(\varepsilon)\left[  f\left(\bar{p}^{\varepsilon}_{t},\frac{\bar{q}
^{\varepsilon}_{t}}{\delta}\right)-f\left(\bar{p}^{\varepsilon}_{0},\frac{\bar{q}^{\varepsilon}_{0}}{\delta}\right)\right] \label{Eq:MartingaleProperty_1}\\
&\quad \mbox{} +g(\varepsilon
)\left[\int_{0}^{t}\frac
{1}{\Delta}\left[  \int_{s}^{s+\Delta}\mathcal{A}_{u^{\varepsilon}_{\rho},\bar
{q}^{\varepsilon}_{\rho}}^{\varepsilon}f\left(\bar{p}^{\varepsilon}_{\rho},\frac{\bar{q}
^{\varepsilon}_{\rho}}{\delta}\right)d\rho\right]  ds-\int_{0}^{t}\mathcal{A}_{u^{\varepsilon}_{s},\bar{q}^{\varepsilon}_{s}}^{\varepsilon
}f\left(\bar{p}^{\varepsilon}_{s},\frac{\bar{q}
^{\varepsilon}_{s}}{\delta}\right)ds\right]\nonumber\\
&  =-\frac{\delta}{\varepsilon}\left(  \int_{0}^{t}\frac{1}{\Delta}\left[
\int_{s}^{s+\Delta}\left[  \mathcal{G}_{\bar{q}^{\varepsilon}_{\rho},u^{\varepsilon}_{\rho}}f\left(\bar{p}^{\varepsilon}_{\rho},\frac{\bar{q}
^{\varepsilon}_{\rho}}{\delta}\right)
-\mathcal{G}_{\bar{q}^{\varepsilon}_{s},u^{\varepsilon}_{\rho}
}f\left(\bar{p}^{\varepsilon}_{\rho},\frac{\bar{q}
^{\varepsilon}_{\rho}}{\delta}\right)\right]  d\rho\right]  ds\right) \nonumber\\
& \quad \mbox{}-\frac{\delta}{\varepsilon}\left(  \int_{\mathcal{Z}\times
\mathcal{Y}\times[0,t]}\mathcal{G}_{\bar{q}^{\varepsilon}_{s},z}f\left(p,r\right)\bar{\mathrm{P}}^{\varepsilon,\Delta}(dz,dpdr,ds)\right) \nonumber\\
& \quad \mbox{}-\int_{0}^{t}\frac{1}{\Delta}  \int_{s}^{s+\Delta}\left[
\mathcal{L}^{m}_{\bar{q}^{\varepsilon}_{\rho}}f\left(\bar{p}^{\varepsilon}_{\rho},\frac{\bar{q}
^{\varepsilon}_{\rho}}{\delta}\right)-\mathcal{L}^{m}_{\bar{q}^{\varepsilon}_{s}}f\left(\bar{p}^{\varepsilon}_{\rho},\frac{\bar{q}
^{\varepsilon}_{\rho}}{\delta}\right)  d\rho\right]  ds\nonumber\\
&\quad  \mbox{}-\int_{\mathcal{Z}\times\mathcal{Y}\times[0,t]}\mathcal{L}^{m}
_{\bar{q}^{\varepsilon}_{s}}f\left(p,r\right)\mathrm{P}^{\varepsilon,\Delta}(dz,dpdr,dt).\nonumber
\end{align}

Let us now analyze the different terms in (\ref{Eq:MartingaleProperty_1}). We start by observing that $\E\left[
M_{T}^{\varepsilon}\right]  ^{2}\leq C_{0}\frac{1}{g(\varepsilon)}$, which then implies that $g(\varepsilon)M^{\varepsilon}_{t}\downarrow 0$ in probability, as $\varepsilon\downarrow0$. Moreover, boundedness of $f$ implies that
$g(\varepsilon)\left[  f\left(\bar{p}^{\varepsilon}_{t},\frac{\bar{q}
^{\varepsilon}_{t}}{\delta}\right)-f\left(\bar{p}^{\varepsilon}_{o},\frac{\bar{q}^{\varepsilon}_{o}}{\delta}\right)\right]$ converges to zero uniformly. Hence, the left hand side of (\ref{Eq:MartingaleProperty_1}) converges to zero in probability as $\varepsilon\downarrow 0$.

Let us next study the right hand side of (\ref{Eq:MartingaleProperty_1}). We have the following
\begin{enumerate}
\item{Conditions \ref{A:Assumption1}, the $L^{2}$ uniform bound on the controls and tightness of $\left\{\bar{q}^{\varepsilon},\varepsilon>0\right\}$, imply that the
first and the third term in the right hand side of
(\ref{Eq:MartingaleProperty_1}) converge to zero in probability as $\delta/\epsilon\downarrow0$.}
\item{The
second term on the right hand side of (\ref{Eq:MartingaleProperty_1}) also
converges to zero in probability, by the fact that $\delta/\epsilon
\downarrow0$ and uniform integrability of
$\mathrm{P}^{\epsilon,\Delta}$.}
\end{enumerate}

Thus, by combining the behavior of the different terms on the left and on the right hand side of (\ref{Eq:MartingaleProperty_1}), we obtain that we should necessarily have that
\[\int_{\mathcal{Z}\times\mathcal{Y}\times[0,T]}\mathcal{L}^{m}
_{\bar{q}_{t}^{\varepsilon}}f(p,r)\mathrm{P}^{\epsilon,\Delta}(dz,dpdr,dt)\rightarrow 0, \quad \textrm{ in probability.}
\]
which by continuity in $t\in[0,T]$ gives (\ref{Eq:ViablePairRequirement3}).
\

Part 5. [Proof of (\ref{Eq:ViablePairRequirement4}).] Finally $\occupation(\cZ\times\cY\times[0,t])=t$ follows from the
fact that analogous property holds at the prelimit level,
$\occupation(\cZ\times\cY\times\{t\})=0$ and the continuity of $t\ra
\occupation(\cZ\times\cY\times [0,t])$ and (\ref{Eq:ViablePairRequirement4}) follows.
\end{proof}

\section{Laplace principle}\label{S:LDP}
The main result of this section is the following Laplace principle. During the proof of  Theorem \ref{T:MainLaplacePrinciple} we also
establish the alternative representation of Theorem
\ref{T:LDPexplcitRepresentation}.

\begin{theorem}\label{T:MainLaplacePrinciple}
 Let $\{q_\bullet^\ve, \ve>0\}$ be the
unique strong solution to (1). Assume Conditions \ref{A:Assumption1} and \ref{A:Assumption2}. Define
\begin{equation}
S_{m}(\phi)=\inf\li_{(\phi,\occupation)\in \cV_{(\gamma,
\cL^{m}_q)}}
\left[\dfrac{1}{2}\int_{\cZ\times\cY\times[0,T]}|z|^2\occupation(dz,dpdr,dt)\right]\label{Eq:GeneralizedRateFunction}
\end{equation}
 with the convention that the infimum over the
empty set is $\infty$. Then for every bounded and continuous
function $h$ mapping $\cC([0,T]; \R^d)$ into $\R$ we have
\begin{align*}
\lim\li_{\ve\da
0}-\ve\ln\E_{q_0}\left[\exp\left(-\dfrac{h(q_\bullet^\ve)}{\ve}\right)\right]
=\inf\li_{\phi\in \cC([0,T];\R^d)}[S_{m}(\phi)+h(\phi)] \ .
\label{Eq:SpecificLaplacePrinciple}
\end{align*}
Moreover, for each $s<\infty$, the set $$\Phi_s=\{\phi\in
\cC([0,T];\R^d): S_{m}(\phi)\leq s\} $$ is a compact subset of
$\cC([0,T]; \R^d)$.
\end{theorem}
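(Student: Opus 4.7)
The plan is to follow the weak convergence approach of \cite{DupuisEllis,DupuisSpiliopoulos} and combine the Boué--Dupuis representation formula with the law of large numbers of Theorem \ref{T:LLN} to obtain the Laplace principle with rate function $S_m$ as in \eqref{Eq:GeneralizedRateFunction}. The closed form announced in Theorem \ref{T:LDPexplcitRepresentation} and the compactness of the level sets will then be obtained by explicitly minimizing the quadratic cost functional over viable pairs with fixed first component.

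For the Laplace lower bound, I would start from the variational representation and pick a family of $\ve$-almost-minimizing controls $u^{\ve} \in \cA$, so that
$$-\ve \ln \E_{q_o}\left[\exp\left(-\tfrac{h(q_\bullet^\ve)}{\ve}\right)\right] \geq \E\left[\tfrac{1}{2}\int_0^T |u^\ve_s|^2 ds + h(\qbar_\bullet^\ve)\right] - \ve.$$
Boundedness of $h$ forces $\sup_\ve \E \int_0^T |u^\ve_s|^2 ds < \infty$, so Theorem \ref{T:LLN} applies: along any subsequence I extract a further subsequence along which $(\qbar^\ve, \occupation^{\ve,\Dt}) \Rightarrow (\qbar, \occupation) \in \cV_{(\gamma, \cL^{m}_q)}$. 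Fatou's lemma combined with the weak lower semicontinuity of $r \mapsto \int |z|^2 r(dz,dpdr,dt)$ then gives
$$\liminf_{\ve \da 0}\left(-\ve \ln \E_{q_o}[e^{-h(q_\bullet^\ve)/\ve}]\right) \geq \E\left[\tfrac{1}{2}\int_{\cZ\times\cY\times[0,T]} |z|^2 \occupation(dz,dpdr,ds) + h(\qbar_\bullet)\right] \geq \inf_{\phi}[S_m(\phi) + h(\phi)].$$

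For the Laplace upper bound, I fix a target $\phi^*$ with $S_m(\phi^*) < \infty$ and, for any $\eta > 0$, a viable pair $(\phi^*, \occupation^*)$ whose cost is within $\eta$ of $S_m(\phi^*)$. Condition (ii) in Definition \ref{Def:ViablePair} together with ergodicity of $\cL^m_q$ forces the $(p,r)$-marginal of $\occupation^*$ conditional on the time variable $t$ to equal $\mu(dpdr|\phi^*_t)$, so I may disintegrate $\occupation^*(dz,dpdr,dt) = P^*_t(dz|p,r)\,\mu(dpdr|\phi^*_t)\,dt$. I then construct a piecewise feedback control $u^\ve$ that, on blocks of length $\Dt(\ve)$, samples essentially from $P^*_t(dz|p,r)$ evaluated at the observed fast state $(\pbar^\ve_t, \qbar^\ve_t / \dt)$. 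Ergodic-averaging arguments along the lines of \cite{DupuisSpiliopoulos}, adapted to the hypoelliptic fast process through the a priori estimates on $\Phi$ recalled in Appendix \ref{S:AppendixA}, show that under $u^\ve$ the slow trajectory $\qbar^\ve$ converges to $\phi^*$ in $\cC([0,T];\R^d)$ and the cost converges to $\tfrac{1}{2}\int |z|^2 \occupation^* + h(\phi^*)$, completing the upper bound.

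To pass from \eqref{Eq:GeneralizedRateFunction} to the closed form of Theorem \ref{T:LDPexplcitRepresentation}, I would minimize over $\occupation$ for fixed absolutely continuous $\phi$. The same disintegration reduces the cost, by Jensen's inequality in the $z$-variable, to a quadratic functional of the conditional mean $\bar{z}_t(p,r) = \int z\, P_t(dz|p,r)$ subject to the linear constraint
$$\dot{\phi}_t = \int_\cY \tfrac{1}{\sqrt{m}}\bigl[c(\phi_t,r) + \sigma(\phi_t,r)\bar{z}_t(p,r)\bigr] \cdot \grad_p \Phi(p,r)\, \mu(dpdr|\phi_t),$$
which is nothing but $\dot{\phi} = \int \gamma \,\occupation$. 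Lagrangian minimization, using the uniform positive-definiteness of $\alpha$ to ensure $Q_m(\phi_t)$ is invertible, yields the quadratic form $(\dot{\phi}_t - r_m(\phi_t))^T Q_m^{-1}(\phi_t)(\dot{\phi}_t - r_m(\phi_t))$. Compactness of $\{S_m \leq s\}$ then follows because this quadratic form, together with boundedness of $r_m$ and $Q_m$, controls $\dot{\phi}$ in $L^2$; this gives equicontinuity and boundedness, hence relative compactness in $\cC([0,T];\R^d)$, while lower semicontinuity is standard from the variational representation. The principal technical difficulty will be the upper bound construction: hypoellipticity of $\cL^m_q$ rules out a direct elliptic ergodic-averaging argument, and one must carefully track moments of $\pbar^\ve$ and derivatives of $\Phi$ to ensure that the fast state equilibrates against $\mu(dpdr|\phi_t)$ uniformly in the control.
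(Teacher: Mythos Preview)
Your lower bound argument matches the paper's essentially verbatim, as does your derivation of the closed form via Jensen's inequality and constrained quadratic minimization.

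The significant divergence is in the upper bound. You propose to work directly with an abstract near-optimal viable pair $(\phi^*,\occupation^*)$, disintegrate $\occupation^*(dz,dpdr,dt)=P^*_t(dz|p,r)\,\mu(dpdr|\phi^*_t)\,dt$, and then build a prelimit control by \emph{sampling} from the kernel $P^*_t$ along the observed fast trajectory. The paper takes a different and cleaner route: it \emph{first} identifies the local rate function $L^o(q,\nu)=\tfrac12(\nu-r_m(q))^TQ_m^{-1}(q)(\nu-r_m(q))$ together with the explicit minimizer
\[
\bar u_\nu(q,(p,r))=\tfrac{1}{\sqrt m}\,\sigma^T(q,r)\bigl(\nabla_p\Phi(p,r)\bigr)^T Q_m^{-1}(q)(\nu-r_m(q)),
\]
mollifies so that $\dot\psi$ is piecewise constant, and then plugs the deterministic feedback $\bar u_t=\bar u_{\dot\psi_t}\bigl(\bar q^\ve_t,(\bar p^\ve_t,\bar q^\ve_t/\delta)\bigr)$ into the controlled system. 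Because this control is a continuous function of the state (Condition~\ref{A:Assumption1} plus the regularity of $\Phi$), Theorem~\ref{T:LLN} applies directly and yields $\bar q^\ve\Rightarrow\psi$ with cost converging to $S_m(\psi)$.

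The advantage of the paper's ordering is that it sidesteps exactly the difficulty you flag at the end: with an abstract kernel $P^*_t$ you have no a~priori regularity in $(t,p,r)$, so verifying that your sampled control sits in $\cA$ with uniform $L^2$ bound, and that the fast process equilibrates against $\mu(\cdot|\phi^*_t)$ under it, would require additional approximation layers. By computing the optimal control explicitly before constructing the prelimit, the paper reduces the upper bound to a single clean application of the law of large numbers already proved. Your approach is not wrong in principle, but it postpones the explicit computation to a point where it no longer simplifies the analysis.
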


In other words, $\{q^\ve_\bullet, \ve>0\}$ satisfies the Laplace
principle with rate function $S(\bullet)$.
\begin{proof}[Proof of Theorem \ref{T:MainLaplacePrinciple}]
The proof of this theorem borrows some of the arguments of the
related proof of the LDP for the elliptic overdamped case of Theorem 2.6
in \cite{DupuisSpiliopoulos}. We present here the main arguments,
emphasizing the differences.
\

Part 1. [Laplace principle lower bound]. Theorem \ref{T:LLN} and
Fatou's lemma, guarantee the validity of the following chain of inequalities.
\begin{align*}
&\liminf_{\varepsilon\downarrow0}\left(  -\varepsilon\ln\E\left[
\exp\left\{  -\frac{h(q^{\varepsilon})}{\varepsilon}\right\}  \right]  \right)
\geq\liminf_{\varepsilon\downarrow0}\left(  \E\left[  \frac{1}%
{2}\int_{0}^{T}\left| u_{t}^{\varepsilon}\right| ^{2}dt+h(\bar
{q}^{\varepsilon})\right]  -\varepsilon\right) \nonumber\\
& \qquad \geq\liminf_{\varepsilon\downarrow0}\left(  \E\left[  \frac
{1}{2}\int_{0}^{T}\frac{1}{\Delta}\int_{t}^{t+\Delta}\left| u_{s}
^{\varepsilon}\right| ^{2}dsdt+h(\bar{q}^{\varepsilon})\right]
\right)
\nonumber\\
& \qquad =\liminf_{\varepsilon\downarrow0}\left(  \E\left[  \frac{1}%
{2}\int_{\mathcal{Z}\times\mathcal{Y}\times\lbrack0,T]}\left|
z\right|
^{2}\mathrm{P}^{\varepsilon,\Delta}(dz,dpdr,dt)+h(\bar{q}^{\varepsilon})\right]
\right)
\nonumber\\
& \qquad \geq\E\left[  \frac{1}{2}\int_{\mathcal{Z}\times
\mathcal{Y}\times[0,T]}\left| z\right|
^{2}\bar{\mathrm{P}}%
(dz,dpdr,dt)+h(\bar{q})\right] \nonumber\\
& \qquad \geq\inf_{(\phi,\mathrm{P})\in\mathcal{V}_{(\lambda,\mathcal{L}^{m}_{q})}}\left\{  \frac{1}{2}%
\int_{\mathcal{Z}\times\mathcal{Y}\times[0,T]}\left| z\right|
^{2}\mathrm{P}(dz,dpdr,dt)+h(\phi)\right\}  \nonumber\\
& \qquad =\inf_{\phi\in
\mathcal{C}([0,T];\mathbb{R}^{d})}\left[  S_{m}(\phi)+h(\phi)\right].
%\label{Eq:LaplacePrincipleLowerBoundGeneral}%
\end{align*}
Hence, the lower bound has been established.

\
Part 2. [Laplace principle upper bound and alternative
representation].  We
first observe that one can write (\ref{Eq:GeneralizedRateFunction}) in terms of a local rate
function
$$S_{m}(\phi)=\int_0^T L^r(\phi_s, \dot{\phi}_s)ds  \ .$$

Here we set
$$L^r(x,\nu)=\inf\li_{\occupation\in \cA_{q,\nu}^r}\int_{\cZ\times\cY}\dfrac{1}{2}|z|^2\occupation(dz,dpdr) \ ,$$
where
$$\cA_{q,\nu}^r=\left\{
\begin{array}{l}
\play{\occupation\in \mathcal{P}(\cZ\times \cY):
\int_{\cZ\times\cY}\cL^{m}_q f(p,r)\occupation(dz,dpdr)=0, \forall f\in \cC^2_{\text{loc}}(\cY) \ , }
\\
\play{\ \ \ \ \ \ \
\int_{\cZ\times\cY}|z|^2\occupation(dz,dpdr)<\infty \text{ and }
\nu=\int_{\cZ\times\cY}\gamma(q,(p,r),z)\occupation(dz,dpdr)}
\end{array}\right\} \ .$$

We can decompose the measure $\occupation\in
\mathcal{P}(\cZ\times\cY)$ into the form
\begin{equation*}
\occupation(dz, dpdr)=\eta(dz|p,r)\mu(dpdr|q) \ ,
\end{equation*}
where $\mu$ is a
probability measure on $\cY$ and $\eta$ is a stochastic kernel on
$\cZ$ given $\cY$. This is referred to as the ``relaxed" formulation
because the control is characterized as a distribution on $\cZ$
(given $q$ and $(p,r)$) rather than as an element of $\cZ$. We now
have, for every $f\in \cC_{\text{loc}}^2(\cY)$ and for every $q\in\mathbb{R}^{d}$, that
\begin{equation*}
\int_{\cY}\cL^{m}_q f(p,r)\mu(dpdr)=0 \ .
\end{equation*}

Here we have used the independence of $\cL^{m}_q$ on the control
variable $z$ to eliminate the stochastic kernel $\eta$. Thus $\mu(dpdr)$ is the unique
corresponding to the operator $\cL^{m}_q$, written as $\mu(dpdr|q)$.

Since the cost is convex in $z$ and $\gamma$ is affine in $z$, the
relaxed control formulation is equivalent to the following ordinary
control formulation of the local rate function
$$L^o(q,\nu)=\inf\li_{(v,\mu)\in \cA_{q,\nu}^o}
\dfrac{1}{2}\int_{\cY}|v(p,r)|^2\mu(dpdr) \ ,$$ where
$$\begin{array}{l}
\play{\cA_{q,\nu}^o=\left\{v(\bullet): \cY\ra \R^d, \mu\in
\mathcal{P}(\cY) \ , \ (v,\mu) \text{ satisfy } \int_{\cY}\cL^{m}_q
f(p,r)\mu(dpdr)=0, \forall f\in \cC^2_{\text{loc}}(\cY) \ ,
\ \right.}
\\
\play{\ \ \ \ \ \ \ \ \ \ \ \left.
\int_{\cY}|v(p,r)|^2\mu(dpdr)<\infty \text{ and }
\nu=\int_{\cY}\gamma(q,(p,r),v(p,r))\mu(dpdr) \right\} \ .}
\end{array}$$

One can show as in \cite[Section 5]{DupuisSpiliopoulos} that $L^r(q,\nu)=L^o(q,\nu)$. %We treat as in (\ref{Eq:AveragedIntegrant}) the vector $c(q,r)$ and the
%control $v(p,r)$ as
%row vectors and $\grad_p\Phi=(\grad_p\Phi_1,...,\grad_p\Phi_d)$
%where $\grad_p\Phi_k$, $k=1,...,d$ are column vectors.
Let us recall now the definitions of $r_{m}(q)$ and $Q_{m}(q)$ from Theorem \ref{T:LDPexplcitRepresentation}.
% \begin{equation*}
% r_{m}(q)=\dfrac{1}{\sqrt{m}}\int_{\cY}\grad_p\Phi(p,r)c(q,r)\mu(dpdr|q)
% \ ,
% \end{equation*}
% and
% \begin{equation*}
%  Q_{m}(q)=\frac{1}{m}\int_{\cY}\grad_p\Phi(p,r)\alpha(q,r)(\grad_p\Phi(p,r))^{T}\mu(dpdr|q)
%  \ .
% \end{equation*}
For any $v\in \cA_{q,\nu}^o$ we can write
$$\begin{array}{ll}
\nu & \play{=\int_{\cY}\gamma(q,(p,r),v(p,r))\mu(dpdr|q)}
\\
&
\play{=\int_{\cY}\dfrac{1}{\sqrt{m}}\left[c(q,r)+\sigma(q,r)v(p,r)\right]\cdot \grad_p\Phi(p,r)\mu(dpdr|q)}
\\
&
\play{=r_{m}(q)+\int_{\cY}\frac{1}{\sqrt{m}}\grad_p\Phi(p,r)\sigma(q,r)(v(p,r))^T\mu(dpdr|q)
\ .}
\end{array}$$

Then, $\nu-r_{m}(q)$ can be treated as $\nu$, and
$\kp(q,(p,r))=\frac{1}{\sqrt{m}}(\grad_p\Phi(p,r))^{T}(\sigma(q,r))^T$,
$u(p,r)=(v(p,r))^T$ in \cite[Lemma 5.1]{DupuisSpiliopoulos}. We
apply this lemma and then we get that for all $v\in \cA_{q,\nu}^o$,
\begin{equation*}
 \int_{\cY}|v(p,r)|^2\mu(dpdr|q)\geq (\nu-r_{m}(q))^TQ^{-1}_{m}(q)(\nu-r_{m}(q)).
\end{equation*}

Moreover, if we take
\begin{equation}
v(p,r)=\bar{u}_\nu(q,(p,r))=\frac{1}{\sqrt{m}}\sigma^{T}(q,r)(\grad_p\Phi(p,r))^{T}Q^{-1}_{m}(q)(\nu-r_{m}(q)) \ ,\label{Eq:OptimalControl}
\end{equation}
we will have
$$\int_{\cY}|\bar{u}_\nu(q,(p,r))|^2\mu(dpdr|q)=(\nu-r_{m}(q))^TQ^{-1}_{m}(q)(\nu-r_{m}(q)) \ .$$

This shows that
$$L^o(x,\nu)=\dfrac{1}{2}(\nu-r_{m}(q))^TQ^{-1}_{m}(q)(\nu-r_{m}(q)) \ ,$$
and the minimum is achieved in (\ref{Eq:OptimalControl}).

Now, that we have identified that the action functional can be written in the proceeding form we can proceed in proving the Laplace principle upper bound.  We must show that for all bounded,
continuous functions $h$ mapping $\mathcal{C}([0,T];\mathbb{R}^{d})$ into
$\mathbb{R}$
\begin{equation*}
\limsup_{\varepsilon\downarrow0}-\varepsilon\ln\E\left[
\exp\left\{  -\frac{h(q^{\varepsilon})}{\varepsilon}\right\}  \right]  \leq
\inf_{\phi\in\mathcal{C}([0,T];\mathbb{R}^{d})}\left[  S_{m}(\phi)+h(\phi)\right]. %\label{Eq:LaplacePrincipleUpperBoundRegime1}%
\end{equation*}

By the variational representation formula, it is enough to prove that
\begin{equation}
\limsup_{\varepsilon\downarrow0}\inf_{u\in\mathcal{A}}\E\left[
\frac{1}{2}\int_{0}^{T}\left|u_{s}\right|
^{2}ds+h(\bar{q}^{\varepsilon})\right]  \leq
\inf_{\phi\in\mathcal{C}([0,T];\mathbb{R}^{d})}\left[  S_{m}(\phi)+h(\phi)\right]. \label{Eq:LaplacePrincipleUpperBoundRegime1}%
\end{equation}

To be precise, we consider for the limiting variational problem in the Laplace
principle a nearly optimal control pair $(\psi,\mathrm{P})$. In particular, let $\eta>0$ be given and consider $\psi\in\mathcal{C}([0,T];\mathbb{R}^{d})$
with $\psi_{0}=q_{o}$ such that
\begin{equation*}
S_{m}(\psi)+h(\psi)\leq\inf_{\phi\in\mathcal{C}([0,T];\mathbb{R}^{d})}\left[
S_{m}(\phi)+h(\phi)\right]  +\eta<\infty.
\label{Eq:NearlyOptimalTrajectoryRegime1}%
\end{equation*}

It is clear now that $L^o(x,\nu)$ is continuous and finite at each pair $(x,\nu)\in\mathbb{R}^{2d}$. Hence, a standard mollification argument, allows us to assume that $\dot{\psi}$ is piecewise constant, see Lemmas 6.5.3 and 6.5.5 in Subsection $6.5$ of \cite{DupuisEllis}.  The control in feedback from used to prove (\ref{Eq:LaplacePrincipleUpperBoundRegime1}) is then given by (\ref{Eq:OptimalControl}), i.e,
\[
\bar{u}_{t}=\bar{u}_{\dot{\psi}_{t}}\left(\bar{q}^{\varepsilon}_{t},
\left(\bar{p}^{\varepsilon}_{t},\frac{\bar{q}^{\varepsilon}_{t}}{\delta}\right)\right)
\ .
\]

It is easy to see that Condition \ref{A:Assumption1} guarantees that $\bar{u}_{t}$ is continuous in all of its arguments  and that (\ref{Eq:SecondOrderControlSystem}) has a unique strong solution with $u_{t}=\bar{u}_{t}$. Then, by Theorem \ref{T:LLN}, we obtain that in distribution $\bar{q}^{\varepsilon}\overset{\mathcal{D}}{\rightarrow}\bar{q}$, where
\begin{equation*}
\bar{q}_{t}=q_{o}+\int_{0}^{t}\int_{\mathcal{Y}}\gamma\left(  \bar{q}_{s},(p,r),\bar{u}_{\dot{\psi}_{s}}(\bar{q}_{s},(p,r))\right)  \mu(dpdr|\bar{q}_{s})ds.\label{Eq:LimitingXReg2}%
\end{equation*}

Keeping in mind  the definition of $\mathcal{A}_{q,\dot{\psi}_{t}}^{o}$ and that
$\psi_{0}=q_{o}$,   we obtain that
\[
\bar{q}_{t}=q_{o}+\int_{0}^{t}\dot{\psi}_{s}ds=\psi_{t}\hspace{0.2cm}\text{
for any }t\in[0,T]\text{, with probability } 1 \ .
\]

Therefore,  we finally obtain that
\begin{align}
\limsup_{\varepsilon\downarrow0}\left[  -\varepsilon\ln\E\left[
\exp\left\{  -\frac{h(q^{\varepsilon})}{\varepsilon}\right\}  \right]  \right]   &
=\limsup_{\varepsilon\downarrow0}\inf_{u}\E\left[  \frac{1}%
{2}\int_{0}^{T}\left| u_{t}\right| ^{2}dt+h(\bar{q}^{\varepsilon})\right]  \nonumber\\
&  \leq\limsup_{\varepsilon\downarrow0}\E\left[  \frac{1}{2}%
\int_{0}^{T}\left| \bar{u}_{t}\right| ^{2}dt+h(\bar
{q}^{\varepsilon})\right]  \nonumber\\
&  =\E\left[  S_{m}(\bar{X})+h(\bar{X})\right]  \nonumber\\
&  \leq\inf_{\phi\in\mathcal{C}([0,T];\mathbb{R}^{d})}\left[  S_{m}(\phi
)+h(\phi)\right]  +\eta.\nonumber
\end{align}

Since $\eta$ is arbitrary, we are done with the proof of the Laplace principle upper bound. At the same time we get the explicit form of the rate function
\begin{align}
S_{m}(\phi)&=\left\{
\begin{array}{ll}
\play{\dfrac{1}{2}\int_0^T
(\dot{\phi}_s-r_{m}(\phi_s))^TQ^{-1}_{m}(\phi_s)(\dot{\phi}_s-r_{m}(\phi_s))ds}&
\text{ if } \phi\in \mathcal{AC}([0,T]; \R^d), \phi_{0}=q_{o}
\\
+\infty & \text{ otherwise \ .}
\end{array}\right.\label{Eq:RateFcnAlternativeRepresentation}
\end{align}

Part 3. [Compactness of level sets]. This follows directly from the alternative representation (\ref{Eq:RateFcnAlternativeRepresentation}), as it is in the standard quadratic form, see for example \cite{DupuisSpiliopoulos}.

\

This concludes the proof of the theorem as well as of the alternative representation of Theorem \ref{T:LDPexplcitRepresentation}.
\end{proof}

\section{Convergence of the action functional as $m\rightarrow
0$}\label{S:RateFunctions}

Let $\beta>0$ and set $\sigma(q,r)=\sqrt{2\beta\lambda(q)} I$. Recall the definitions of the operators $\cL^{m}_q$ and $\mathcal{L}_{q}^{0}$ and of the corresponding invariant measures from Section \ref{S:MainResults}.% we have defined $\mu(dpdr|q)$ as the

Theorem \ref{T:MainTheorem3} follows directly from Lemma \ref{L:ConvergenceOfQandr} below, whose proof is based on Theorems \ref{T:ConvergenceInvariantMeasures} and \ref{T:ConvergenceCellProblems}.
\begin{lemma}\label{L:ConvergenceOfQandr}
Assume that Conditions \ref{A:Assumption1} and \ref{A:Assumption2} hold. Let $Q_{m}(q), r_{m}(q)$ and $Q_{0}(q), r_{0}(q)$ be as in Theorems \ref{T:LDPexplcitRepresentation} and \ref{T:MainTheorem3} respectively. Then, for any $\ve>0$, there exist some $m_0>0$ such that for every $q\in
\R^d$ and every $0<m<m_0$ we have
$$|Q_m(q)-Q_0(q)|<\ve \ , \ |r_m(q)-r_0(q)|<\ve \ .$$
\end{lemma}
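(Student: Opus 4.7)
The plan is to rewrite $\mu(dp\,dr\mid q)=\tilde{\rho}^m(p,r)\rho^0(p,r)\,dp\,dr$ and reduce both convergences to applications of Cauchy--Schwarz in the weighted space $L^2(\mathcal{Y};\rho^0\,dp\,dr)$, using Theorems \ref{T:ConvergenceInvariantMeasures} and \ref{T:ConvergenceCellProblems} as the two input convergences. Setting $A_m(p,r):=\frac{1}{\sqrt{m}}\nabla_p\Phi(p,r)$ and $A_0(p,r):=\frac{1}{\lambda(q)}(I+\nabla_r\chi(q,r))$, and observing that $A_0$, $c(q,r)$, and $\alpha(q,r)=2\beta\lambda(q)I$ are independent of $p$ while $\rho^0=\rho^{\mathrm{OU}}(p)\rho_0(r\mid q)$ factorizes with $\int\rho^{\mathrm{OU}}\,dp=1$, the expressions in Theorem \ref{T:MainTheorem3} collapse to
$$r_0(q)=\int_{\mathcal{Y}}A_0\,c\,\rho^0\,dp\,dr,\qquad Q_0(q)=\int_{\mathcal{Y}}A_0\,\alpha\,A_0^T\,\rho^0\,dp\,dr,$$
matching the form of $r_m(q)$ and $Q_m(q)$ from Theorem \ref{T:LDPexplcitRepresentation} up to replacing $A_0$ by $A_m$ and inserting the factor $\tilde{\rho}^m$. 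In this language, Theorem \ref{T:ConvergenceCellProblems} reads $\|A_m-A_0\|_{L^2(\rho^0)}\to 0$ and Theorem \ref{T:ConvergenceInvariantMeasures} reads $\|\tilde{\rho}^m-1\|_{L^2(\rho^0)}\to 0$.

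For $r_m-r_0$ I would add and subtract to obtain
$$r_m-r_0=\int_{\mathcal{Y}}(A_m-A_0)\,c\,\tilde{\rho}^m\,\rho^0\,dp\,dr+\int_{\mathcal{Y}}A_0\,c\,(\tilde{\rho}^m-1)\,\rho^0\,dp\,dr,$$
and apply Cauchy--Schwarz in $L^2(\rho^0)$. Boundedness of $c$ (Condition \ref{A:Assumption1}) and of $A_0$ (via smoothness of $\chi$ on the torus and the lower bound on $\lambda$), combined with the elementary inequality $\|\tilde{\rho}^m\|_{L^2(\rho^0)}\le 1+\|\tilde{\rho}^m-1\|_{L^2(\rho^0)}$, sends both terms to zero.

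For $Q_m-Q_0$, I set $B_m:=A_m-A_0$ and expand
\begin{align*}
Q_m-Q_0&=\int A_0\,\alpha\,A_0^T\,(\tilde{\rho}^m-1)\,\rho^0+\int B_m\,\alpha\,A_0^T\,\tilde{\rho}^m\,\rho^0\\
&\qquad+\int A_0\,\alpha\,B_m^T\,\tilde{\rho}^m\,\rho^0+\int B_m\,\alpha\,B_m^T\,\tilde{\rho}^m\,\rho^0.
\end{align*}
The first three terms are each $O(\|B_m\|_{L^2(\rho^0)}+\|\tilde{\rho}^m-1\|_{L^2(\rho^0)})$ by Cauchy--Schwarz using $\|A_0\|_\infty,\|\alpha\|_\infty<\infty$ (Condition \ref{A:Assumption1}) together with the uniform boundedness of $\|\tilde{\rho}^m\|_{L^2(\rho^0)}$ and of $\|A_m\|_{L^2(\rho^0)}\le\|A_0\|_{L^2(\rho^0)}+\|B_m\|_{L^2(\rho^0)}$. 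The fourth term is bounded by $\|\alpha\|_\infty\int|B_m|^2\tilde{\rho}^m\,\rho^0$, and controlling this integral is where I expect the main obstacle to lie.

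The hard part is showing $\int|B_m|^2\tilde{\rho}^m\rho^0\to 0$: a direct Cauchy--Schwarz produces $\|B_m\|_{L^4(\rho^0)}^2\,\|\tilde{\rho}^m\|_{L^2(\rho^0)}$, and Theorem \ref{T:ConvergenceCellProblems} alone supplies only $L^2(\rho^0)$-control on $B_m$. My plan is to combine Theorem \ref{T:ConvergenceCellProblems} with the a priori estimates of Appendix \ref{S:AppendixA}: the polynomial-growth bounds there on $\nabla_p\Phi$, once divided by $\sqrt{m}$, yield an $m$-uniform pointwise bound $|A_m(p,r)|\le C(1+|p|^k)$, which together with Gaussian decay of $\rho^{\mathrm{OU}}(p)$ gives $\sup_m\|A_m\|_{L^q(\rho^0)}<\infty$ for every $q<\infty$. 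Interpolating this uniform $L^q(\rho^0)$ bound against the $L^2(\rho^0)$-convergence of Theorem \ref{T:ConvergenceCellProblems} upgrades $B_m\to 0$ to convergence in $L^4(\rho^0)$, and the remaining Cauchy--Schwarz closes the estimate. Everything else reduces to routine chaining of the $L^2(\rho^0)$-inequalities collected above, and since the same bounds are uniform in $q$ on compact sets under Condition \ref{A:Assumption1}, one obtains the claimed $\varepsilon/m_0$ quantification pointwise in $q\in\mathbb{R}^d$.
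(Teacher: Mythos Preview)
Your overall strategy---rewriting $\mu=\tilde\rho^m\rho^0$ and reducing to Cauchy--Schwarz against Theorems~\ref{T:ConvergenceInvariantMeasures} and~\ref{T:ConvergenceCellProblems}---is exactly what the paper does, and your treatment of $r_m-r_0$ and of the first three terms in the $Q_m-Q_0$ expansion is fine. The issue is only in the ``hard'' fourth term, and there your proposed fix has a gap.

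Appendix~\ref{S:AppendixA} does \emph{not} provide an $m$-uniform pointwise bound of the form $|A_m(p,r)|\le C(1+|p|^k)$. The Hairer--Pavliotis estimates recorded there are for the cell problem with a fixed operator $\mathcal{L}_q^m$; the constants depend on the coefficients of that operator and hence implicitly on $m$. Dividing by $\sqrt{m}$ and claiming the result is still polynomially bounded uniformly in $m$ is precisely the non-trivial statement that Section~\ref{S:CellProblems} works hard to establish (and only in $L^2$, not pointwise). So your interpolation step is unjustified as written.

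The paper avoids this detour by a small but decisive change of viewpoint: it groups the difference as $(A_m-A_0)(A_m+A_0)^T$ and integrates against $\rho^m=\tilde\rho^m\rho^0$ rather than $\rho^0$, then applies Cauchy--Schwarz in $L^2(\rho^m)$. Your ``hard'' term $\int |B_m|^2\,\tilde\rho^m\rho^0$ is literally $\|B_m\|_{L^2(\rho^m)}^2$, and what Section~\ref{S:CellProblems} actually proves (see the relations labelled (\ref{Eq:RelationToShow1}) and the argument leading to (\ref{Eq:RelationToShow1a2})) is the slightly stronger statement $\|B_m\|_{L^2(\rho^m)}\to 0$, not just the $L^2(\rho^0)$ version quoted in Theorem~\ref{T:ConvergenceCellProblems}. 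The companion factor $\|A_m+A_0\|_{L^2(\rho^m)}$ is then bounded by the triangle inequality, since $\|A_0\|_\infty<\infty$ and $\|A_m\|_{L^2(\rho^m)}\le\|A_0\|_{L^2(\rho^m)}+\|B_m\|_{L^2(\rho^m)}$. No $L^4$ control, and nothing from Appendix~\ref{S:AppendixA}, is needed.
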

\begin{proof}
For notational convenience and without loss of generality, we shall set $\sigma(q,r)=2 I, \beta=\lambda(q)=1$. Since $q$ is viewed as a parameter, we do not mention it explicitly in the formulas.
We have
\begin{align*}
&Q_{m}-Q_{0}=2\int_{\mathcal{Y}}\left|\frac{1}{\sqrt{m}}\partial_{p}\Phi(p,r)\right|^{2}\rho^{0}(p,r)\tilde{\rho}^{m}(p,r)dpdr-2\int_{\mathcal{Y}}\left|I+\partial_{r}\chi(r)\right|^{2}\rho^{0}(p,r)dpdr\nonumber\\
&\quad=2\int_{\mathcal{Y}}\left[\left|\frac{1}{\sqrt{m}}\partial_{p}\Phi(p,r)\right|^{2}-\left|I+\partial_{r}\chi(r)\right|^{2}\right]\rho^{0}(p,r)\tilde{\rho}^{m}(p,r)dpdr\nonumber\\
&\qquad+2\int_{\mathcal{Y}}\left|I+\partial_{r}\chi(r)\right|^{2}\left(\tilde{\rho}^{m}(p,r)-1\right)\rho^{0}(p,r)dpdr\nonumber\\
&\quad=2\int_{\mathcal{Y}}\left[\left(\frac{1}{\sqrt{m}}\partial_{p}\Phi(p,r)-\left(I+\partial_{r}\chi(r)\right)\right)\left(\frac{1}{\sqrt{m}}\partial_{p}\Phi(p,r)+\left(I+\partial_{r}\chi(r)\right)\right)^{T}\right]\rho^{m}(p,r)dpdr\nonumber\\
&\qquad+2\int_{\mathcal{Y}}\left|I+\partial_{r}\chi(r)\right|^{2}\left(\tilde{\rho}^{m}(p,r)-1\right)\rho^{0}(p,r)dpdr
\ .
\end{align*}

Taking absolute value and using Cauchy--Schwarz inequality we obtain
\begin{align*}
|Q_{m}-Q_{0}|
&\leq 2\left\|\frac{1}{\sqrt{m}}\partial_{p}\Phi-\left(I+\partial_{r}\chi\right)\right\|_{L^{2}(\mathcal{Y}; \rho^{m})}
\left\|\frac{1}{\sqrt{m}}\partial_{p}\Phi+\left(I+\partial_{r}\chi\right)\right\|_{L^{2}(\mathcal{Y}; \rho^{m})}\nonumber\\
&\qquad+2\left\|\left(I+\partial_{r}\chi\right)^{2}\right\|_{L^{2}(\mathcal{Y};
\rho^{0})}\left\|\tilde{\rho}^{m}-1\right\|_{L^{2}(\mathcal{Y};
\rho^{0})}
\end{align*}

By Theorem \ref{T:ConvergenceInvariantMeasures} we have
\begin{align}
\lim_{m\rightarrow0}\left\|\tilde{\rho}^{m}-1\right\|_{L^{2}(\mathcal{Y};
\rho^{0})}&=0 \ . \label{Eq:RelationToShow3}
\end{align}

By Theorem \ref{T:ConvergenceCellProblems} we have
\begin{align}
\lim_{m\rightarrow0}\left\|\frac{1}{\sqrt{m}}\partial_{p}\Phi-\left(I+\partial_{r}\chi\right)\right\|_{L^2(\mathcal{Y};
\rho^{m})}&=0 \ .\label{Eq:RelationToShow1}
\end{align}

The results (\ref{Eq:RelationToShow3}) and
(\ref{Eq:RelationToShow1}) imply that there exists a uniform
constant $C$ such that
\begin{align}
\sup_{m\in(0,1)}\left\|\frac{1}{\sqrt{m}}\partial_{p}\Phi\right\|_{L^{2}(\mathcal{Y};
\rho^{m})}+\sup_{m\in(0,1)}\left\|\left|I+\partial_{r}\chi\right|^{2}\right\|_{L^{2}(\mathcal{Y};
\rho^{m})}\leq C \ , \label{Eq:RelationToShow2}
\end{align}
and by classical elliptic regularity theory there exists a uniform
constant $C$, clearly independent of $m$, such that
\begin{align}
\left\|\left(I+\partial_{r}\chi\right)^{2}\right\|_{L^{2}(\mathcal{Y};
\rho^{0})}\leq C \ .
\label{Eq:RelationToShow4}
\end{align}
From (\ref{Eq:RelationToShow3})--(\ref{Eq:RelationToShow4}) we infer
the first inequality of this Lemma. In a similar way from
(\ref{Eq:RelationToShow3}) and (\ref{Eq:RelationToShow2}) we also
derive the second estimate of this Lemma.
\end{proof}

\section{$L^{2}$ Convergence of the invariant density}\label{S:InvariantMeasures}
In this section we prove Theorem \ref{T:ConvergenceInvariantMeasures}. For notational convenience and without loss of generality, let us
assume in this Section that $\alpha(q,r)=2I$ and that
$\beta=\lambda=1$ (recall $\sm(q,r)=\sqrt{2\lb(q)\bt} I$). Since $q\in\mathbb{R}^{d}$ is viewed as a parameter, it will not be mentioned explicitly.

We want to show that
\begin{align}
\lim_{m\rightarrow0}\left\|\tilde{\rho}^{m}-1\right\|_{L^{2}(\mathcal{Y};
\rho^{0})}&=0 \ . \label{Eq:RelationToShow3a}
\end{align}
where we recall that $\rho^{0}(p,r)=\rho^{\text{OU}}(p)\rho_{0}(r)$.

Notice that in the case of gradient potential, i.e., when
$b(q,r)=-\nabla_{r}V(q,r)$ then (\ref{Eq:RelationToShow3a}) is
immediately true even without the limit. In fact in this case we
have that the invariant density is basically
$\rho^{m}(p,r)=\rho^{\text{OU}}(p)\rho_{0}(r)$  for every finite
$m\in\mathbb{R}_{+}$ which implies that $\tilde{\rho}^{m}(p,r)=1$
completing the proof of (\ref{Eq:RelationToShow3a}). Our goal here
is to show that this true in the more general setting of not
potential drifts.

By Condition \ref{A:Assumption1} the drift $b(q,r)$ and
its partial derivatives are uniformly bounded with respect to $q$. For
this reason we sometimes suppress the dependence on $q$ and write
$b(q,r)=b(r)$. Also, for notational convenience, let us set
\[
h(r)= b(r)-\nabla_{r}\log\rho_{0}(r) \ .
\]
This definition for $h(r)$ will also be used throughout the rest of the
paper.

Notice that in the gradient case, i.e, when $b(r)=-\nabla V(r)$, we
have that $h(r)=0$, but in the general case one has $h(r)\neq 0$. Let us next establish some useful relations
\begin{lemma}\label{L:L2_Bounds1}
 Let $f,g$ be two functions that belong in the domain of definition of $\cL^{m}_{q}$. Then, we have the identity
 \begin{align*}
  &\int_{\mathcal{Y}}\left[\left(\cL^{m}_{q}f(p,r)\right) g(p,r) +\left(\cL^{m}_{q}g(p,r)\right) f(p,r) \right]\rho^{0}(p,r)dpdr=\nonumber\\
  &\quad -\frac{2}{m}\int_{\mathcal{Y}}\nabla_{p}f(p,r)\nabla_{p}g(p,r)\rho^{0}(p,r)dpdr+\frac{1}{\sqrt{m}}
  \int_{\mathcal{Y}}f(p,r)g(p,r)h(r)p\rho^{0}(p,r)dpdr \ .
 \end{align*}
 In particular, we have that
 \begin{align*}
  &\int_{\mathcal{Y}}\left(\cL^{m}_{q}f(p,r)\right) f(p,r) \rho^{0}(p,r)dpdr=\nonumber\\
  &\qquad=-\frac{1}{m}\int_{\mathcal{Y}}\left|\nabla_{p}f(p,r)\right|^{2}\rho^{0}(p,r)dpdr
  +\frac{1}{2\sqrt{m}}\int_{\mathcal{Y}}\left|f(p,r)\right|^{2}h(r)p\rho^{0}(p,r)dpdr
  \ .
 \end{align*}
\end{lemma}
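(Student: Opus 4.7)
The plan is to carry out a direct integration by parts in the weighted inner product $L^{2}(\mathcal{Y};\rho^{0})$, separating $\cL^{m}_{q}$ according to the scale in $m$ and according to the variable ($p$ or $r$) in which each term acts. Since $\sigma=\sqrt{2}I$ and $\lb=\beta=1$ here, the operator splits as
\begin{equation*}
\cL^{m}_{q}=\tfrac{1}{m}\cA+\tfrac{1}{\sqrt{m}}\cB,\qquad \cA f=-p\cdot\grad_{p}f+\Delta_{p}f,\qquad \cB f=p\cdot\grad_{r}f+b(r)\cdot\grad_{p}f,
\end{equation*}
and $\rho^{0}(p,r)=\rho^{\text{OU}}(p)\rho_{0}(r)$ factorizes, with $\grad_{p}\log\rho^{\text{OU}}(p)=-p$ and $\grad_{r}\log\rho_{0}(r)$ appearing naturally from the factor $\rho_{0}$.

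First, I would treat the Ornstein--Uhlenbeck block $\cA$. This is the classical self-adjoint piece in $L^{2}(\rho^{\text{OU}})$: one integration by parts in $p$ (using that $\cA$ is in divergence form with respect to $\rho^{\text{OU}}$) gives
\begin{equation*}
\int_{\mathcal{Y}}(\cA f)\,g\,\rho^{0}\,dpdr=-\int_{\mathcal{Y}}\grad_{p}f\cdot\grad_{p}g\,\rho^{0}\,dpdr,
\end{equation*}
and the symmetric sum contributes the term $-\tfrac{2}{m}\int\grad_{p}f\cdot\grad_{p}g\,\rho^{0}\,dpdr$.

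The main point, and the only place where the non-gradient structure of $b$ enters, is the handling of $\cB$. I would integrate by parts separately in $r$ for the streaming term $p\cdot\grad_{r}f$ and in $p$ for the drift term $b(r)\cdot\grad_{p}f$. Using the factorization of $\rho^{0}$, the first integration produces a residual proportional to $\grad_{r}\log\rho_{0}(r)$ and the second produces a residual proportional to $-\grad_{p}\log\rho^{\text{OU}}(p)=p$ multiplied by $b(r)$. After symmetrization (i.e.\ adding the analogous identity with $f$ and $g$ swapped), the first-order derivatives in $f$ and $g$ cancel because $\cB$ is a real vector field, and only the boundary/logarithmic residuals survive, giving
\begin{equation*}
\int_{\mathcal{Y}}\bigl[(\cB f)g+(\cB g)f\bigr]\rho^{0}\,dpdr=\int_{\mathcal{Y}}fg\,\bigl(b(r)-\grad_{r}\log\rho_{0}(r)\bigr)\cdot p\,\rho^{0}\,dpdr=\int_{\mathcal{Y}}fg\,h(r)\cdot p\,\rho^{0}\,dpdr,
\end{equation*}
by definition of $h$. (In the gradient case $b=-\grad_{r}V$ one has $\rho_{0}\propto e^{-V}$, so $h\equiv0$ and this residual vanishes; the presence of $h$ is exactly the price paid for $b$ not being a gradient.)

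Combining the $\cA$-contribution multiplied by $1/m$ with the $\cB$-contribution multiplied by $1/\sqrt{m}$ yields the first identity. The second identity follows by specializing $g=f$, noting that the left hand side of the first identity then equals $2\int(\cL^{m}_{q}f)f\,\rho^{0}dpdr$, and dividing by $2$. The only care needed is justifying the vanishing of boundary terms: in $r$ this is free because $\mathcal{Y}$ is a torus in $r$; in $p$ it follows from the Gaussian decay of $\rho^{\text{OU}}$ together with the growth assumptions on $f,g$ embedded in the domain of $\cL^{m}_{q}$. There is no real obstacle beyond this bookkeeping; the statement is essentially a Green-type formula tailored to the hypoelliptic operator.
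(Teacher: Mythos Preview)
Your proposal is correct and follows essentially the same integration-by-parts argument as the paper. The only cosmetic difference is organizational: you split $\cL^{m}_{q}=\tfrac{1}{m}\cA+\tfrac{1}{\sqrt{m}}\cB$ and treat the two blocks separately (using self-adjointness of $\cA$ with respect to $\rho^{\text{OU}}$ and the product-rule identity $(\cB f)g+(\cB g)f=\cB(fg)$), whereas the paper computes the full formal adjoint $(\cL^{m}_{q})^{*}$ acting on $g\rho^{0}$ and then rearranges; the underlying computation and the appearance of $h(r)=b(r)-\nabla_{r}\log\rho_{0}(r)$ are identical.
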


\begin{lemma}\label{L:L2_Bounds3}
Let $f,g$ be two functions that are in
$\mathcal{W}^{1,0}_{2}(\mathcal{Y})$, i.e., the functions and their derivatives with respect to $p$ are in $L^{2}(\mathcal{Y})$. Then, there exists a finite
constant $K<\infty$ that depends only on
$\sup_{r\in\mathbb{T}^{d}}\left|h(r)\right|$ such that
\begin{align*}
\left|\left<h(r)p,f g\right>_{L^{2}(\mathcal{Y};
\rho^{0})}\right|&\leq K \left[\left\|f\right\|_{L^{2}(\mathcal{Y};
\rho^{0})} \left\|\nabla_{p}g\right\|_{L^{2}(\mathcal{Y};
\rho^{0})}+\left\|\nabla_{p}f\right\|_{L^{2}(\mathcal{Y}; \rho^{0})}
\left\|g\right\|_{L^{2}(\mathcal{Y}; \rho^{0})}\right] \ .
\end{align*}
\end{lemma}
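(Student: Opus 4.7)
The plan is to exploit the Gaussian structure of the Ornstein--Uhlenbeck invariant density in $p$. With the conventions of this section ($\beta=1$, $\lambda=1$), one has $\rho^{\mathrm{OU}}(p) \propto e^{-|p|^{2}/2}$, so the crucial algebraic identity
\[
p\,\rho^{\mathrm{OU}}(p) \;=\; -\nabla_{p}\rho^{\mathrm{OU}}(p)
\]
holds. Since $\rho^{0}(p,r)=\rho^{\mathrm{OU}}(p)\rho_{0}(r)$ and $h(r)$ and $\rho_{0}(r)$ depend only on $r$, this identity will allow us to move the unbounded factor $p$ off the integrand by integration by parts in the $p$ variable only.

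Concretely, I would start from
\[
\langle h(r)p,\,fg\rangle_{L^{2}(\mathcal{Y};\rho^{0})}
\;=\;\int_{\mathcal{Y}} h(r)\!\cdot\! p\; f(p,r) g(p,r)\,\rho^{\mathrm{OU}}(p)\rho_{0}(r)\,dp\,dr,
\]
substitute $p\,\rho^{\mathrm{OU}}=-\nabla_{p}\rho^{\mathrm{OU}}$, and integrate by parts in $p$ over $\mathbb{R}^{d}$. The boundary terms vanish due to the Gaussian decay of $\rho^{\mathrm{OU}}$ (for smooth compactly supported $f,g$; the general $\mathcal{W}^{1,0}_{2}$ case then follows by a standard density argument). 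The product rule then gives
\[
\langle h(r)p,\,fg\rangle_{L^{2}(\mathcal{Y};\rho^{0})}
\;=\;\int_{\mathcal{Y}} h(r)\!\cdot\!\bigl(g\,\nabla_{p}f+f\,\nabla_{p}g\bigr)\rho^{0}(p,r)\,dp\,dr.
\]

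Next, since $h(r)$ is uniformly bounded on $\mathbb{T}^{d}$ by Condition \ref{A:Assumption1} (recall $h(r)=b(r)-\nabla_{r}\log\rho_{0}(r)$, and $\rho_{0}$ is a smooth strictly positive density on the torus by standard elliptic regularity for $\mathcal{L}^{0}_{q}$), I can pull out $\sup_{r}|h(r)|$ and apply the Cauchy--Schwarz inequality in $L^{2}(\mathcal{Y};\rho^{0})$ to each of the two summands separately, yielding the bound with $K \leq \sup_{r\in\mathbb{T}^{d}}|h(r)|$ (or a small dimensional multiple thereof).

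There is no real obstacle; the only subtle points are (i) justifying the integration by parts on the non-compact $p$-space, which is handled by the rapid decay of $\rho^{\mathrm{OU}}$ together with approximation of $\mathcal{W}^{1,0}_{2}$ functions by smooth ones with compact support in $p$, and (ii) confirming that $h$ is indeed bounded, which follows from the torus smoothness of $\rho_{0}$ and boundedness of $b$ from Condition \ref{A:Assumption1}. The estimate as stated then follows immediately.
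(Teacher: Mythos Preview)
Your proof is correct and follows essentially the same approach as the paper's: both use the Gaussian identity $p\,\rho^{\mathrm{OU}}=-\nabla_{p}\rho^{\mathrm{OU}}$ (equivalently, expand $\nabla_{p}(e^{-|p|^{2}/2}fg)$) to convert the weighted integral into $\int_{\mathcal{Y}} h(r)\cdot\nabla_{p}(fg)\,\rho^{0}\,dp\,dr$, and then apply Cauchy--Schwarz (H\"older) together with the boundedness of $h$. The paper omits the density/approximation remark you include, but otherwise the arguments coincide.
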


\begin{lemma}\label{L:L2_Bounds4}
For every $\eta>0$, there exists  constant  constant $K<\infty$ that
depends only on $\sup_{r\in\mathbb{T}^{d}}\left|h(r)\right|$ such
that
\begin{align*}
\left<f,\mathcal{B} f\right>_{L^{2}(\mathcal{Y};
\rho^{0})}&=\frac{1}{2}\left<p h(r),
|f|^{2}\right>_{L^{2}(\mathcal{Y}; \rho^{0})}\geq -K \left[\eta
\left\|f\right\|^{2}_{L^{2}(\mathcal{Y}; \rho^{0})}
+\frac{1}{4\eta}\left\|\nabla_{p}f\right\|^{2}_{L^{2}(\mathcal{Y};
\rho^{0})}\right] \ ,
\end{align*}
where we recall that $\mathcal{B}f=p\cdot\nabla_{r}f+b(q,r)\nabla_{p}f$.
\end{lemma}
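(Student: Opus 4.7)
The statement has two separate parts: first the identity $\langle f,\mathcal{B}f\rangle_{L^2(\mathcal{Y};\rho^0)}=\frac{1}{2}\langle ph(r),|f|^2\rangle_{L^2(\mathcal{Y};\rho^0)}$, and then the lower bound in terms of $\|f\|^2$ and $\|\nabla_p f\|^2$. My plan is to dispatch the identity by integration by parts, exploiting the product form $\rho^0(p,r)=\rho^{\mathrm{OU}}(p)\rho_0(r)$, and then reduce the lower bound to an immediate application of Lemma \ref{L:L2_Bounds3} followed by Young's inequality.

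For the identity, I would write
\[
\langle f,\mathcal{B}f\rangle_{L^2(\mathcal{Y};\rho^0)}=\int_{\mathcal{Y}}f(p\cdot\nabla_r f)\rho^{\mathrm{OU}}(p)\rho_0(r)\,dpdr+\int_{\mathcal{Y}}f(b(r)\cdot\nabla_p f)\rho^{\mathrm{OU}}(p)\rho_0(r)\,dpdr,
\]
and then use $f\nabla_r f=\tfrac12\nabla_r|f|^2$ and $f\nabla_p f=\tfrac12\nabla_p|f|^2$. Integration by parts in $r$ (using periodicity on $\mathbb{T}^d$ to kill the boundary) converts the first term into $-\tfrac12\int|f|^2 p\cdot\nabla_r\log\rho_0\,\rho^0\,dpdr$, and integration by parts in $p$ in the second term, combined with the Gaussian identity $\nabla_p\log\rho^{\mathrm{OU}}(p)=-p$, yields $\tfrac12\int|f|^2 b(r)\cdot p\,\rho^0\,dpdr$. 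Adding these and recalling the definition $h(r)=b(r)-\nabla_r\log\rho_0(r)$ produces exactly $\tfrac12\langle ph(r),|f|^2\rangle_{L^2(\mathcal{Y};\rho^0)}$.

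For the lower bound, applying Lemma \ref{L:L2_Bounds3} with $g=f$ gives
\[
\bigl|\langle h(r)p,|f|^2\rangle_{L^2(\mathcal{Y};\rho^0)}\bigr|\leq 2K'\,\|f\|_{L^2(\mathcal{Y};\rho^0)}\,\|\nabla_p f\|_{L^2(\mathcal{Y};\rho^0)},
\]
for some constant $K'$ depending only on $\sup_{r}|h(r)|$. Young's inequality in the form $2ab\leq 2\eta a^2+\tfrac{1}{2\eta}b^2$, applied with $a=\|f\|_{L^2(\mathcal{Y};\rho^0)}$ and $b=\|\nabla_p f\|_{L^2(\mathcal{Y};\rho^0)}$, then yields the bound
\[
\tfrac{1}{2}\langle ph(r),|f|^2\rangle_{L^2(\mathcal{Y};\rho^0)}\geq -K'\Bigl[\eta\|f\|_{L^2(\mathcal{Y};\rho^0)}^2+\tfrac{1}{4\eta}\|\nabla_p f\|_{L^2(\mathcal{Y};\rho^0)}^2\Bigr],
\]
which is the desired inequality after relabelling $K'$ as $K$.

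I do not anticipate a serious obstacle here: the entire argument is essentially bookkeeping once the right two integration by parts are performed. The only minor care needed is to verify that $f$ has enough regularity and decay in $p$ for the boundary terms at $|p|\to\infty$ to vanish, which is ensured by the Gaussian weight $\rho^{\mathrm{OU}}$ as long as $f\in\mathcal{W}^{1,0}_2(\mathcal{Y})$, matching the framework already used in Lemma \ref{L:L2_Bounds3}.
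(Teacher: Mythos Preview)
Your proposal is correct and matches the paper's own proof essentially step for step: the paper also splits $\langle f,\mathcal{B}f\rangle$ into the $p\cdot\nabla_r f$ and $b(r)\cdot\nabla_p f$ pieces, integrates by parts in $r$ and $p$ respectively to obtain the identity $\tfrac12\langle ph(r),|f|^2\rangle$, and then invokes Lemma~\ref{L:L2_Bounds3} with $g=f$ followed by the generalized Cauchy--Schwarz (Young) inequality $ab\leq\eta a^2+\tfrac{1}{4\eta}b^2$ to get the lower bound. Your remark about the regularity/decay needed for the boundary terms is the only detail you add beyond what the paper writes explicitly.
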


The proof of Lemmas \ref{L:L2_Bounds1}--\ref{L:L2_Bounds4} are in
Appendix B. Let us now define
\begin{equation*}
 \delta^{m}(p,r)=\tilde{\rho}^{m}(p,r)-1 \ .
\end{equation*}

Recall that our goal is to prove Theorem \ref{T:ConvergenceInvariantMeasures}, i.e. that (\ref{Eq:RelationToShow3a}) holds.
The next lemmas are towards this direction. The proof of Lemmas
\ref{L:L2_Bounds2}--\ref{L:L2_Bounds2b} are in Appendix B.
\begin{lemma}\label{L:L2_Bounds2}
 For every $m>0$ we have the following equality
\begin{align*}
&\left\|\nabla_{p}\delta^{m}\right\|^{2}_{L^{2}(\mathcal{Y};
\rho^{0})}=\frac{\sqrt{m}}{2}\left<h(r)p,\left|\delta^{m}\right|^{2}\right>_{L^{2}(\mathcal{Y};
\rho^{0})}+\sqrt{m}\left<h(r)p,\delta^{m}\right>_{L^{2}(\mathcal{Y};
\rho^{0})} \ .
\end{align*}
\end{lemma}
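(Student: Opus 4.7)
The plan is to exploit the defining property of the invariant density, namely that $\rho^m$ satisfies $\int_{\mathcal{Y}} (\mathcal{L}^m_q f)\, \rho^m\, dpdr = 0$ for every $f$ in the domain of $\mathcal{L}^m_q$, and then to test this identity against $f=\delta^m$, exploiting the decomposition $\rho^m = (1+\delta^m)\rho^0$. Writing out the identity gives
\begin{equation*}
\int_{\mathcal{Y}} (\mathcal{L}^m_q \delta^m)\, \rho^0\, dpdr \;=\; -\int_{\mathcal{Y}} (\mathcal{L}^m_q \delta^m)\, \delta^m\, \rho^0\, dpdr,
\end{equation*}
so the proof reduces to evaluating each of these two integrals using Lemma \ref{L:L2_Bounds1}.

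For the right-hand side I would apply the second (``diagonal'') identity in Lemma \ref{L:L2_Bounds1} with $f=\delta^m$, obtaining
\begin{equation*}
\int_{\mathcal{Y}} (\mathcal{L}^m_q \delta^m)\, \delta^m\, \rho^0\, dpdr \;=\; -\frac{1}{m}\|\nabla_p \delta^m\|^{2}_{L^{2}(\mathcal{Y};\rho^{0})} + \frac{1}{2\sqrt{m}}\langle h(r)p, |\delta^m|^2\rangle_{L^{2}(\mathcal{Y};\rho^{0})}.
\end{equation*}
For the left-hand side I would apply the first (bilinear) identity in Lemma \ref{L:L2_Bounds1} with $f=\delta^m$ and $g\equiv 1$. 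Since $\mathcal{L}^m_q 1 = 0$ and $\nabla_p 1 = 0$, the bilinear identity collapses to
\begin{equation*}
\int_{\mathcal{Y}} (\mathcal{L}^m_q \delta^m)\, \rho^0\, dpdr \;=\; \frac{1}{\sqrt{m}}\langle h(r)p,\delta^m\rangle_{L^{2}(\mathcal{Y};\rho^{0})}.
\end{equation*}

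Substituting both expressions into the balance equation and multiplying through by $m$ produces exactly the claimed identity. The only subtle point, which I would verify carefully, is the justification of using $g\equiv 1$ and $f=\delta^m$ as test functions against the invariant density, i.e.\ that $\delta^m$ lies in the appropriate domain so that the identities of Lemma \ref{L:L2_Bounds1} apply and all integrals are finite; this follows from the a priori regularity and decay of $\rho^m/\rho^0$ established via the hypoelliptic estimates invoked earlier in the paper (cf.\ Appendix~\ref{S:AppendixA}). A useful sanity check is the gradient case $b=-\nabla_r V$, where $h\equiv 0$ and the identity reduces to $\|\nabla_p\delta^m\|^2 = 0$, which is consistent with $\tilde{\rho}^m \equiv 1$ in that setting.
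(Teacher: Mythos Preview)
Your argument is correct and lands on the same identity as the paper, but the organization differs slightly from the paper's proof. The paper first writes down the explicit PDE satisfied by $\delta^{m}$ (equation \eqref{Eq:deltam equation}), multiplies it by $\delta^{m}$, integrates against $\rho^{0}$, and then invokes Lemma~\ref{L:L2_Bounds4} to handle the $\langle\mathcal{B}\delta^{m},\delta^{m}\rangle$ term before combining with Lemma~\ref{L:L2_Bounds1}. You instead use the invariance of $\rho^{m}$ directly at the integral level, $\int(\cL^{m}_{q}\delta^{m})(1+\delta^{m})\rho^{0}\,dpdr=0$, and appeal to Lemma~\ref{L:L2_Bounds1} twice (once with $g\equiv1$, once diagonally). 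This is marginally cleaner since it avoids writing out the PDE for $\delta^{m}$ and bypasses Lemma~\ref{L:L2_Bounds4}, but the two routes are really the same computation in disguise: the PDE \eqref{Eq:deltam equation} is nothing but the invariance condition rewritten, and Lemma~\ref{L:L2_Bounds4} is itself the $f=g$ case of the integration-by-parts identity underlying Lemma~\ref{L:L2_Bounds1}.
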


\begin{lemma}\label{L:L2_Bounds2a}
 There is a universal constant $K>0$ that depends  on
 $\sup_{r\in\mathbb{T}^{d}}\left|h(r)\right|$, but not on $m>0$, such that for all $m$ sufficiently small
\begin{align*}
&(1-\sqrt{m})\left\|\nabla_{p}\nabla_{p}\delta^{m}\right\|^{2}_{L^{2}(\mathcal{Y};
\rho^{0})}\leq \sqrt{m} K\left[1+
\left\|\delta^{m}\right\|^{2}_{L^{2}(\mathcal{Y}; \rho^{0})}+
\left\| \nabla_{p}\delta^{m}\right\|^{2}_{L^{2}(\mathcal{Y};
\rho^{0})}\right] \ .
 \end{align*}
\end{lemma}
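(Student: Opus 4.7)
The strategy is to obtain a second-order analogue of Lemma \ref{L:L2_Bounds2} by differentiating the equation satisfied by $\delta^m$ once in $p$, testing the resulting vector identity against $\nabla_p\delta^m$ in $L^2(\mathcal{Y};\rho^0)$, and absorbing the residual cross term into the left-hand side via Young's inequality tuned to $\sqrt{m}$. Starting from the adjoint relation $(\cL^{m}_q)^{*,0}(1+\delta^m)=0$ in $L^2(\rho^0)$, a direct computation of the adjoint with respect to $\rho^0$ (using $\cA^{*,0}=\cA$ and $\cB^{*,0}=-\cB+p\cdot h(r)$) yields the pointwise identity
\begin{equation*}
\cA\delta^m=\sqrt{m}\bigl[\cB\delta^m-(p\cdot h(r))(\delta^m+1)\bigr].
\end{equation*}
Applying $\nabla_p$ and invoking the commutators $[\nabla_p,\cA]=-\nabla_p$ (from $\cA=-p\cdot\nabla_p+\Delta_p$) together with $[\nabla_p,\cB]=\nabla_r$ (from $\cB=p\cdot\nabla_r+b(r)\cdot\nabla_p$) produces a vector equation for $\nabla_p\delta^m$. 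Pairing it with $\nabla_p\delta^m$ in $L^2(\rho^0)$ and applying Lemma \ref{L:L2_Bounds1} componentwise yields the energy identity
\begin{equation*}
\|\nabla_p\nabla_p\delta^m\|^2_{L^2(\rho^0)}+\|\nabla_p\delta^m\|^2_{L^2(\rho^0)}=-\sqrt{m}\,\langle\nabla_p\delta^m,\nabla_r\delta^m\rangle_{L^2(\rho^0)}+\sqrt{m}\,R(\delta^m),
\end{equation*}
where $R(\delta^m)=\tfrac{1}{2}\langle p\cdot h,|\nabla_p\delta^m|^2\rangle-\langle\nabla_p\delta^m,h\delta^m\rangle-\langle\nabla_p\delta^m,h\rangle$ collects the $p\cdot h(r)$-weighted residuals, each of which is controlled by Lemmas \ref{L:L2_Bounds3}--\ref{L:L2_Bounds4} in terms of $K[1+\|\delta^m\|^2_{L^2(\rho^0)}+\|\nabla_p\delta^m\|^2_{L^2(\rho^0)}]$ with $K$ depending only on $\sup_r|h(r)|$.

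The main obstacle is the cross term $\sqrt{m}\langle\nabla_p\delta^m,\nabla_r\delta^m\rangle$, since $\|\nabla_r\delta^m\|_{L^2(\rho^0)}$ is not \emph{a priori} under control. The plan is to integrate by parts in $r$ to write
$\langle\nabla_p\delta^m,\nabla_r\delta^m\rangle=-\langle\delta^m,\nabla_r\cdot\nabla_p\delta^m\rangle-\langle\delta^m\nabla_p\delta^m,\nabla_r\log\rho_0\rangle$,
in which $\nabla_r\log\rho_0$ is bounded by elliptic regularity of $\rho_0$ under Condition \ref{A:Assumption1}. The remaining mixed derivative $\|\nabla_p\nabla_r\delta^m\|_{L^2(\rho^0)}$ is then controlled by exploiting the commutator $[\nabla_r,\cA]=0$: applying $\nabla_r$ to the original equation and testing the result against $\nabla_r\delta^m$ in $L^2(\rho^0)$ produces a Lemma \ref{L:L2_Bounds2}-type estimate that provides $\|\nabla_p\nabla_r\delta^m\|^2$ of order $\sqrt{m}$ times quantities that can in turn be absorbed. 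The delicate technical point is orchestrating this loop so that $\|\nabla_r\delta^m\|^2$-type contributions never leave the right-hand side without a compensating $\sqrt{m}$ factor.

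Finally, every $\sqrt{m}$-scaled cross term on the right-hand side is estimated via $|\sqrt{m}\langle F,G\rangle|\leq\tfrac{\sqrt{m}}{2}(\|F\|^2+\|G\|^2)$; the one absorption which lands on $\|\nabla_p\nabla_p\delta^m\|^2$ produces a $\tfrac{\sqrt{m}}{2}\|\nabla_p\nabla_p\delta^m\|^2$ contribution that is moved to the left-hand side, and iterating the argument once more (to handle a second $\|\nabla_p\nabla_p\delta^m\|^2$ coming through the estimate for $\|\nabla_p\nabla_r\delta^m\|$) generates the factor $(1-\sqrt{m})$ in front of $\|\nabla_p\nabla_p\delta^m\|^2$. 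The remaining right-hand side is bounded by $\sqrt{m}\,K[1+\|\delta^m\|^2_{L^2(\rho^0)}+\|\nabla_p\delta^m\|^2_{L^2(\rho^0)}]$ for all $m$ sufficiently small. The heart of the argument is the hypoelliptic treatment of $\nabla_r\delta^m$: the commutator $[\nabla_p,\cB]=\nabla_r$ creates the problematic term, while the Dirichlet form of $\cA$ together with a second application of the energy method (using $[\nabla_r,\cA]=0$) provides just enough regularization to close the estimate.
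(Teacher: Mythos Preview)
Your energy identity is actually more careful than the paper's. The paper differentiates equation (\ref{Eq:deltam equation}) in $p_i$ and records
\[
\cL_q^m\partial_{p_i}\delta^m=\tfrac{2}{\sqrt{m}}\,\cB\partial_{p_i}\delta^m-\tfrac{1}{\sqrt{m}}\,h_i(r)(\delta^m+1)-\tfrac{1}{\sqrt{m}}\,p\cdot h(r)\,\partial_{p_i}\delta^m,
\]
silently dropping the commutator $[\partial_{p_i},\cL^m_q]\delta^m=-\tfrac{1}{m}\partial_{p_i}\delta^m+\tfrac{1}{\sqrt{m}}\partial_{r_i}\delta^m$ on the left and the matching $\tfrac{2}{\sqrt{m}}\partial_{r_i}\delta^m$ from $\partial_{p_i}\cB$ on the right. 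With those terms absent, no $\nabla_r\delta^m$ ever appears and the estimate closes in two lines via Lemmas \ref{L:L2_Bounds1}, \ref{L:L2_Bounds3} and Young's inequality. Your derivation, working from the equivalent $\cA\delta^m=\sqrt{m}[\cB\delta^m-p\cdot h(r)(\delta^m+1)]$, keeps the commutators and therefore lands on the genuine identity, which carries the extra cross term $-\sqrt{m}\langle\nabla_p\delta^m,\nabla_r\delta^m\rangle$.

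The gap is in how you propose to dispose of that cross term. Integrating by parts in $r$ trades it for $\|\nabla_p\nabla_r\delta^m\|\,\|\delta^m\|$, and the second energy estimate you invoke (apply $\nabla_r$, test against $\nabla_r\delta^m$, use $[\nabla_r,\cA]=0$) bounds $\|\nabla_p\nabla_r\delta^m\|^2$ by $\sqrt{m}$ times terms that still contain $\|\nabla_r\delta^m\|^2$ --- this is precisely the content of Lemma \ref{L:L2_Bounds2b}. You flag this as the ``delicate technical point,'' but carrying a $\sqrt{m}$ factor in front of $\|\nabla_r\delta^m\|^2$ is not sufficient: the statement of Lemma \ref{L:L2_Bounds2a} has \emph{no} $\nabla_r\delta^m$ on the right-hand side, and there is nothing on the left to absorb it into. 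The loop you describe does not close to the stated inequality. What would work is to prove Lemmas \ref{L:L2_Bounds2a} and \ref{L:L2_Bounds2b} as a coupled pair, allowing $\|\nabla_r\delta^m\|^2$ on the right of both; since Section \ref{S:InvariantMeasures} only ever uses them together inside an $H^1$-norm, that weaker joint version still suffices for the downstream argument.
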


 \begin{lemma}\label{L:L2_Bounds2b}
 There is a universal constant $K>0$ that depends
 on $\sup_{r\in\mathbb{T}^{d}}\max(\left|h(r)\right|, |\grad_r h(r)|)$, but not on $m>0$, such that for all $m$ sufficiently small
\begin{align*}
&(1-\sqrt{m})\left\|\nabla_{p}\nabla_{r}\delta^{m}\right\|^{2}_{L^{2}(\mathcal{Y}; \rho^{0})}\leq\nonumber\\
&\qquad \leq\sqrt{m}
K\left[1+\left\|\delta^{m}\right\|^{2}_{L^{2}(\mathcal{Y};
\rho^{0})}+ \left\|
\nabla_{p}\delta^{m}\right\|^{2}_{L^{2}(\mathcal{Y}; \rho^{0})}+
\left\| \nabla_{r}\delta^{m}\right\|^{2}_{L^{2}(\mathcal{Y};
\rho^{0})}\right] \ .
 \end{align*}
\end{lemma}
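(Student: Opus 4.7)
The strategy mirrors the proof of Lemma \ref{L:L2_Bounds2a} but differentiates the governing PDE once more in the $r$-variable before testing. The starting identity is the strong form of the stationarity condition,
\begin{equation*}
\mathcal{A}\delta^{m} = \sqrt{m}\,\mathcal{B}\delta^{m} - \sqrt{m}(\delta^{m}+1)\,h(r)\!\cdot\!p,
\end{equation*}
which is the same PDE implicit in the proofs of Lemmas \ref{L:L2_Bounds2} and \ref{L:L2_Bounds2a}; it follows from $\int \mathcal{L}^{m}_{q} f\cdot \tilde\rho^{m}\rho^{0}\,dpdr = 0$ by testing against arbitrary smooth $f$, applying the first identity of Lemma \ref{L:L2_Bounds1} with $g=\tilde\rho^{m}$, integrating by parts in $p$, and identifying coefficients. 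Applying $\partial_{r_{i}}$ and invoking the commutators $[\partial_{r_{i}},\mathcal{A}]=0$ (since the coefficients of $\mathcal{A}$ do not depend on $r$) and $[\partial_{r_{i}},\mathcal{B}]\delta^{m} = (\partial_{r_{i}}b)\!\cdot\!\nabla_{p}\delta^{m}$ produces
\begin{equation*}
\mathcal{A}\partial_{r_{i}}\delta^{m} = \sqrt{m}\,\mathcal{B}\partial_{r_{i}}\delta^{m} + \sqrt{m}(\partial_{r_{i}}b)\!\cdot\!\nabla_{p}\delta^{m} - \sqrt{m}(\partial_{r_{i}}\delta^{m})(h\!\cdot\!p) - \sqrt{m}(\delta^{m}+1)(\partial_{r_{i}}h)\!\cdot\!p.
\end{equation*}

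Next I would multiply by $\partial_{r_{i}}\delta^{m}\rho^{0}$, integrate over $\mathcal{Y}$, and sum in $i$. Integration by parts in $p$ (via the identity $\int\mathcal{A}u\cdot u\,\rho^{0} = -\|\nabla_{p}u\|^{2}_{L^{2}(\rho^{0})}$, a consequence of the $\mathcal{A}$-component of Lemma \ref{L:L2_Bounds1}) turns the left-hand side into $-\|\nabla_{p}\nabla_{r}\delta^{m}\|^{2}_{L^{2}(\rho^{0})}$. On the right, the $\mathcal{B}$-contribution equals $\tfrac{\sqrt{m}}{2}\langle p\!\cdot\!h,|\nabla_{r}\delta^{m}|^{2}\rangle$ by Lemma \ref{L:L2_Bounds4}, and combines with the third term into a net quantity of the form $\langle p\!\cdot\!h,|\nabla_{r}\delta^{m}|^{2}\rangle$, bounded once more via Lemma \ref{L:L2_Bounds4} by $\sqrt{m}K\bigl[\|\nabla_{r}\delta^{m}\|^{2}+\tfrac{1}{4}\|\nabla_{p}\nabla_{r}\delta^{m}\|^{2}\bigr]$. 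The commutator term is controlled by Cauchy--Schwarz using the $L^{\infty}$-bound on $\nabla_{r}b$ from Condition \ref{A:Assumption1}. The last term is split into a $\delta^{m}$-piece and a constant piece, and each is estimated by Lemma \ref{L:L2_Bounds3} with $\partial_{r_{i}}h$ playing the role of $h$ in that lemma; boundedness of $\partial_{r_{i}}h$ is exactly the hypothesis $\sup_{r}|\nabla_{r}h(r)|<\infty$ in the present statement.

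Collecting the estimates and applying AM--GM with a constant cutoff yields
\begin{equation*}
\|\nabla_{p}\nabla_{r}\delta^{m}\|^{2}_{L^{2}(\rho^{0})} \leq \sqrt{m}\,K\Bigl[1+\|\delta^{m}\|^{2}+\|\nabla_{p}\delta^{m}\|^{2}+\|\nabla_{r}\delta^{m}\|^{2}+\|\nabla_{p}\nabla_{r}\delta^{m}\|^{2}\Bigr],
\end{equation*}
where $K$ depends only on $\sup_{r}|h(r)|$ and $\sup_{r}|\nabla_{r}h(r)|$; the dependence on $\sup_{r}|\nabla_{r}b|$ is absorbed into that on $\sup_{r}|\nabla_{r}h|$ because $\nabla_{r}b = \nabla_{r}h + \nabla_{r}^{2}\log\rho_{0}$ and $\rho_{0}$ is smooth and strictly positive on the torus by elliptic regularity for $\mathcal{L}^{0}_{q}$. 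For $m$ small enough the $\sqrt{m}K\|\nabla_{p}\nabla_{r}\delta^{m}\|^{2}$-term is absorbed back into the left-hand side, producing the factor $(1-\sqrt{m})$ stated in the lemma after rescaling $K$. The main obstacle, distinguishing this proof from Lemma \ref{L:L2_Bounds2a}, is precisely the non-vanishing commutator $[\partial_{r_{i}},\mathcal{B}] = (\partial_{r_{i}}b)\!\cdot\!\nabla_{p}$ together with the $\partial_{r_{i}}h$ forcing term: both reflect the genuine $r$-dependence and non-gradient character of the drift $b(q,r)$, and both are what force the strengthened regularity hypothesis on $\nabla_{r}h$.
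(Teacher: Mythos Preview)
Your proposal is correct and follows essentially the same route as the paper: differentiate the equation for $\delta^{m}$ in $r_{i}$, test against $\partial_{r_{i}}\delta^{m}$ with respect to $\rho^{0}$, convert the left side into $-\|\nabla_{p}\nabla_{r}\delta^{m}\|^{2}$ via the $\mathcal{A}$-part of Lemma~\ref{L:L2_Bounds1}, rewrite the $\mathcal{B}$-contribution using Lemma~\ref{L:L2_Bounds4}, estimate the $p\cdot h$ and $p\cdot\partial_{r_{i}}h$ inner products by Lemma~\ref{L:L2_Bounds3}, and absorb the resulting $\sqrt{m}\,\|\nabla_{p}\nabla_{r}\delta^{m}\|^{2}$ term into the left side. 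The only visible difference is bookkeeping: the paper works with the $\mathcal{L}^{m}_{q}$-form of the equation while you work with the equivalent $\mathcal{A}$-form, and you explicitly carry the commutator term $(\partial_{r_{i}}b)\cdot\nabla_{p}\delta^{m}$, which the paper's displayed differentiated equation suppresses; your handling of that term (Cauchy--Schwarz, contributing $\sqrt{m}K(\|\nabla_{p}\delta^{m}\|^{2}+\|\nabla_{r}\delta^{m}\|^{2})$) is fine and already fits inside the stated bound.
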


Let us define $\cL^{1}$ to be the operator $\cL^{m}_q$ with $m=1$.
We recall that
\[
\cL^{1}=\mathcal{A}+\mathcal{B} \ ,
\]
where $\mathcal{A}=-p\cdot \grad_p+\Dt_p $ and $\mathcal{B}=p \cdot
\grad_r+b(q,r)\cdot\grad_p$. It is easy to check that, with respect to the measure $\rho^{0}(p,r)dpdr$ we can actually write that
\[
\cL^{1}=-AA^{*}+\mathcal{B}
\]
where
\[
A=\nabla_{p} \ , \quad \text{ and }\quad A^{*}=-(\nabla_{p}-p) \ .
\]

One can also check that the adjoint operator of $\mathcal{B}$ is
formally given by
\[
\mathcal{B}^{*}=-\mathcal{B}+p h(r) \ .
\]

Notice that the latter relation implies that $\mathcal{B}$ is
antisymmetric only if $h(r)=0$ which essentially is the case of
gradient drift. However, in the general case $h(r)\neq 0$ which
would imply that $\mathcal{B}$ is not antisymmetric. Next, we introduce the operator
\[
\mathcal{C}=[A,\mathcal{B}]=[\nabla_{p},
p\nabla_{r}+b(r)\nabla_{p}]=\nabla_{r} \ .
\]

A word on notation now. In order to make the notation lighter we
will write from now on
\[
\left\|\cdot\right\|=\left\|\cdot\right\|_{L^{2}(\mathcal{Y};
\rho^{0})}, \text{ and
}\left<\cdot,\cdot\right>=\left<\cdot,\cdot\right>_{L^{2}(\mathcal{Y};
\rho^{0})} \ ,
\]
for the norm and for the inner product in the space
${L^{2}(\mathcal{Y}; \rho^{0})}$.

In order to show that (\ref{Eq:RelationToShow3a}) holds, we  use
the work of \cite{Villani2006}. In particular, as in
\cite{Villani2006}, let $a,b,c$ be constants to be chosen such that
$1>a> b>c>0$ and let us define the norm
\begin{align*}
((f,f))&=\left\|f\right\|^{2}+\alpha\left\|A
f\right\|^{2}+2b\mathcal{R}\left<Af,\mathcal{C}f\right>+c\left\|\mathcal{C}f\right\|^{2}
\ .
\end{align*}

In fact, as it is argued in \cite{Villani2006}, the norms $((f,f))$
and $\left\|f\right\|^{2}_{H^{1}(\mathcal{Y}; \rho^{0})}$ are
equivalent as soon as $b<\sqrt{ac}$, in that
\[
\min\{1,a,c\}\left(1-\frac{b}{\sqrt{ac}}\right)\left\|f\right\|^{2}_{H^{1}(\mathcal{Y};
\rho^{0})}\leq ((f,f))\leq
\max\{1,a,c\}\left(1+\frac{b}{\sqrt{ac}}\right)\left\|f\right\|^{2}_{H^{1}(\mathcal{Y};
\rho^{0})} \ .
\]

Since, we are dealing with a real Hilbert space, all the inner
products are real. By polarization we have
\begin{align*}
((f,\cL^{1}f))&=\left<f,\cL^{1}f\right>+a\left<A f,
A\cL^{1}f\right>+b\left[\left<A\cL^{1}f,\mathcal{C}f\right>+
\left<Af,\mathcal{C}\cL^{1}f\right>\right]+c\left<\mathcal{C}f,\mathcal{C}\cL^{1}f\right>
 \ .
\end{align*}

One important difference between the current setup and the setup of
\cite{Villani2006} is that there $\mathcal{B}^{*}=-\mathcal{B}$
whereas here that is not the case, as we have
$\mathcal{B}^{*}=-\mathcal{B}+p h(r)$. Keeping that in mind and
repeating the argument of the proof of Theorem 18 in
\cite{Villani2006}, we obtain that there are constants $a,b,c$ that
are sufficiently small such that $1\gg a\gg b\gg 2c$ with $b^{2}<ac$
(the exact same constants as in \cite{Villani2006}) such that
\begin{align}
((f,\cL^{1}f))&\geq K\left[\left\|Af\right\|^{2}+\left\|\mathcal{C}f\right\|^{2} \right]\nonumber\\
&+\left\{\left<f,\mathcal{B}f\right>+a\left<A f, \mathcal{B}Af\right>+b\left<p h(r)Af, \mathcal{C}f\right>+c\left<\mathcal{C}f,\mathcal{B}\mathcal{C}f\right>\right\}\nonumber\\
&\geq K
\left\|f\right\|^{2}_{H^{1}}+\left\{\left<f,\mathcal{B}f\right>+a\left<A
f, \mathcal{B}Af\right>+b\left<p h(r),Af
\mathcal{C}f\right>+c\left<\mathcal{C}f,\mathcal{B}\mathcal{C}f\right>\right\}
\ . \label{Eq:NormToBound1}
\end{align}

The  bracket term of the right hand side of the inequality is due to
the fact that in our case $h(r)\neq 0$ and thus $\mathcal{B}$ is not
anti-symmetric. The bracket term is equal to zero in \cite{Villani2006}.

Let us now choose $f=\delta^{m}$ in (\ref{Eq:NormToBound1}). The
strategy of the proof is: (a) bound from below the bracket term on
the right hand side of (\ref{Eq:NormToBound1}) using Lemmas
\ref{L:L2_Bounds3}-\ref{L:L2_Bounds2b} and the equation that
$\delta^{m}$ satisfies, and (b) bound from above the left hand side
of (\ref{Eq:NormToBound1}) using Lemmas
\ref{L:L2_Bounds3}-\ref{L:L2_Bounds2b} and the equation that
$\delta^{m}$ satisfies. Putting the two bounds together one will
then obtain a bound for $\|\delta^{m}\|^{2}_{H^{1}}$ which will give
the convergence to zero of (\ref{Eq:RelationToShow3a}) that we need,
combined with Poincar\'{e} inequality for the measure
$\rho^{0}(p,r)dpdr$.

We would like to highlight here that one of the obstacles in putting the lower and upper bounds together, are the order one terms
$\left<f,\cL^{1}f\right>$ in the definition of $((f,\cL^{1}f))$ and $\left<f,\mathcal{B}f\right>$ in the lower bound (\ref{Eq:NormToBound1}). However, as it turns out, see (\ref{Eq:EquationInvariantDensity3}),
for $f=\delta^{m}$, we actually have that  $\left<\cL^{1}\delta^{m},\delta^{m}\right>-\left<\mathcal{B}\delta^{m},\delta^{m}\right>=o(\sqrt{m})$ which then allows us to proceed with the bounds. The rest of the terms are being handled via Lemmas \ref{L:L2_Bounds3}-\ref{L:L2_Bounds2b}.

We start with obtaining a lower bound for the bracket term on the
right hand side of (\ref{Eq:NormToBound1}) using Lemmas
\ref{L:L2_Bounds2}--\ref{L:L2_Bounds2b} and the equation that
$\delta^{m}$ satisfies.  For this purpose, let us define
\begin{align*}
R(\delta^{m})&=\left<\delta^{m},\mathcal{B}\delta^{m}\right>+a\left<A \delta^{m}, \mathcal{B}A\delta^{m}\right>+b\left<p h(r)A\delta^{m}, \mathcal{C}\delta^{m}\right>+c\left<\mathcal{C}\delta^{m},\mathcal{B}\mathcal{C}\delta^{m}\right>\nonumber\\
&=\left<\delta^{m},\mathcal{B}\delta^{m}\right>+R_{1}(\delta^{m}) \ .
\end{align*}

Let $\eta>0$ to be chosen. By Lemmas
\ref{L:L2_Bounds3}-\ref{L:L2_Bounds4},  recalling that
$A\delta^{m}=\nabla_{p}\delta^{m}$ and
$\mathcal{C}\delta^{m}=\nabla_{r}\delta^{m}$ and using the generalized Cauchy inequality $ab\leq \eta a^{2}+\frac{1}{4\eta}b^{2}$ we have that
\begin{align*}
R_{1}(\delta^{m})&\geq - K\left\{ a\left[\eta \|\nabla_{p}\delta^{m}\|^{2}
+\frac{1}{4\eta}\|\nabla_{p}\nabla_{p}\delta^{m}\|^{2}\right]+
b\left[\eta \|\nabla_{r}\delta^{m}\|^{2}+\frac{1}{4\eta}\| \nabla_{p}\nabla_{p}\delta^{m} \|^{2}\right]\right.\nonumber\\
&\left.+b\left[\eta \|\nabla_{p}\delta^{m}\|^{2}+\frac{1}{4\eta}\|
\nabla_{p}\nabla_{r}\delta^{m}
\|^{2}\right]+c\left[\eta\|\nabla_{r}\delta^{m}\|^{2}+\frac{1}{4\eta}\|\nabla_{p}\nabla_{r}\delta^{m}\|^{2}\right]\right\}
\ .
\end{align*}

Next, using Lemmas \ref{L:L2_Bounds2}, \ref{L:L2_Bounds2a} and
\ref{L:L2_Bounds2b} we subsequently obtain
\begin{align*}
R_{1}(\delta^{m}) &\geq -K\left\{ a\left[\eta \|\nabla_{p}\delta^{m}\|^{2}+\frac{\sqrt{m}}{4\eta}\left(1+\|\delta^{m}\|^{2}+\|\nabla_{p}\delta^{m}\|^{2}\right)\right]\right.\nonumber\\
&\left.\qquad +b\left[\eta \|\nabla_{r}\delta^{m}\|^{2}+\frac{\sqrt{m}}{4\eta}\left(1+\|\delta^{m} \|^{2}+\| \nabla_{p}\delta^{m} \|^{2}\right)\right]\right.\nonumber\\
&\left.\qquad +b\left[\eta \|\nabla_{p}\delta^{m}\|^{2}+\frac{\sqrt{m}}{4\eta}\left(\|\delta^{m} \|^{2}+\|\nabla_{p}\delta^{m} \|^{2}+\|\nabla_{r}\delta^{m} \|^{2}\right)\right]\right.\nonumber\\
&\left.\qquad +c\left[\eta\|\nabla_{r}\delta^{m}\|^{2}+\frac{\sqrt{m}}{4\eta}\left(\|\delta^{m} \|^{2}+\|\nabla_{p}\delta^{m} \|^{2}+\|\nabla_{r}\delta^{m} \|^{2}\right)\right]\right\}\nonumber\\
&\geq -K\left[\eta\left(\|\nabla_{p}\delta^{m}\|^{2}+\|\nabla_{r}\delta^{m}\|^{2}\right)+\frac{\sqrt{m}}{4\eta}\left(1+\|\delta^{m} \|^{2}+\|\nabla_{p}\delta^{m} \|^{2}+\|\nabla_{r}\delta^{m} \|^{2}\right)\right]\nonumber\\
&\geq
-K\left[\eta\|\delta^{m}\|^{2}_{H^{1}}+\frac{\sqrt{m}}{4\eta}\left(1+\|\delta^{m}
\|^{2}+\|\delta^{m} \|^{2}_{H^{1}}\right)\right] \ ,
\end{align*}
where the positive constant $K<\infty$ may change from line to line
but it is always independent of $m$. Choosing now $\eta=\eta(m)$
such that $\lim_{m\downarrow 0}\eta(m)=\lim_{m\downarrow
0}\frac{\sqrt{m}}{\eta(m)}=0$, we obtain for
$\hat{\eta}(m)=\max\{\eta(m),\frac{\sqrt{m}}{\eta(m)}\}\downarrow 0$, that
\begin{align*}
R_{1}(\delta^{m}) &\geq  -K\hat{\eta}(m)\left[1+\|\delta^{m}
\|^{2}+\|\delta^{m} \|^{2}_{H^{1}}\right] \ .
\end{align*}

So, overall we have that for $m$ sufficiently small there is
$\hat{\eta}(m)\downarrow 0$ as $m\downarrow 0$ such that
\begin{align*}
R(\delta^{m})&\geq
\left<\delta^{m},\mathcal{B}\delta^{m}\right>-K\hat{\eta}(m)\left[1+\|\delta^{m}
\|^{2}+\|\delta^{m} \|^{2}_{H^{1}}\right] \ ,
\end{align*}
or in other words by (\ref{Eq:NormToBound1}) with $f=\delta^{m}$ we
have that for $m$ sufficiently small there is
$\hat{\eta}(m)\downarrow 0$ as $m\downarrow 0$ such that
\begin{align}
((\delta^{m},\cL^{1}\delta^{m})) &\geq K (1-\hat{\eta}(m))
\left\|\delta^{m}\right\|^{2}_{H^{1}}+\left<\delta^{m},\mathcal{B}\delta^{m}\right>-K\hat{\eta}(m)\left[1+\|\delta^{m}
\|^{2}\right]  \ . \label{Eq:NormToBound2}
\end{align}

Hence, recalling the definition of the inner product
$((\cdot,\cdot))$, using (\ref{Eq:NormToBound2}) and rearranging the
expression a little bit we have obtained the following bound
\begin{align}
&K (1-\hat{\eta}(m)) \left\|\delta^{m}\right\|^{2}_{H^{1}}\leq \left<\delta^{m},\cL^{1}\delta^{m}\right>-\left<\delta^{m},\mathcal{B}\delta^{m}\right>+K\hat{\eta}(m)\left[1+\|\delta^{m} \|^{2}\right]\nonumber\\
&\quad+a\left<A \delta^{m},
A\cL^{1}\delta^{m}\right>+b\left[\left<A\cL^{1}\delta^{m},\mathcal{C}\delta^{m}\right>+\left<A\delta^{m},\mathcal{C}\cL^{1}\delta^{m}\right>\right]+c\left<\mathcal{C}\delta^{m},\mathcal{C}\cL^{1}\delta^{m}\right>
 \ . \label{Eq:NormToBound3}
\end{align}

The next goal is to derive an appropriate upper bound for the left
hand side of (\ref{Eq:NormToBound3}). First, we need to obtain the
equation that $\delta^{m}$ satisfies.
 By factoring out $\rho^{m}(p,r)=\rho^{0}(p,r)\tilde{\rho}^{m}(p,r)$ where $\rho^{0}(p,r)=\rho^{\text{OU}}(p)\rho_{0}(r)$,
 we obtain the following equation for $\tilde{\rho}^{m}(p,r)$:
 %\begin{equation*}
%\cL^{m}_{q}\tilde{\rho}^{m}(p,r)=\frac{2}{\sqrt{m}}\mathcal{B}\tilde{\rho}^{m}(p,r)-\frac{1}{\sqrt{m}}
%p\left(b(r)-\nabla_{r}\log\rho_{0}(r)\right)\tilde{\rho}^{m}(p,r)
% \ ,
%\end{equation*}
\begin{equation*}
\cL^{m}_{q}\tilde{\rho}^{m}(p,r)=\frac{2}{\sqrt{m}}\mathcal{B}\tilde{\rho}^{m}(p,r)-\frac{1}{\sqrt{m}}
p h(r) \tilde{\rho}^{m}(p,r) \ .
\end{equation*}
where we recall that $h(r)= b(r)-\nabla_{r}\log\rho_{0}(r)$. Hence, the equation for $\delta^{m}(p,r)=\tilde{\rho}^{m}(p,r)-1$ is
\begin{equation*}
\cL^{m}_{q}\delta^{m}(p,r)=\frac{2}{\sqrt{m}}\mathcal{B}\delta^{m}(p,r)-\frac{1}{\sqrt{m}}
p h(r) \left[\delta^{m}(p,r)+1\right] \ ,
\label{Eq:EquationInvariantDensity}
\end{equation*}
or in terms of the operator $\cL^{1}=\cL^{m=1}_{q}$ we have
\begin{equation}
\cL^{1}\delta^{m}(p,r)=\left(1+\sqrt{m}\right)\mathcal{B}\delta^{m}(p,r)-\sqrt{m}
p h(r) \left[\delta^{m}(p,r)+1\right] \ .
\label{Eq:EquationInvariantDensity2}
\end{equation}

By multiplying both sides of (\ref{Eq:EquationInvariantDensity2}) by
$\delta^{m}$ and integrating over $\mathcal{Y}$ with respect to the
measure $\rho^{0}(p,r)dpdr$ we then obtain that
\begin{equation}
\left<\cL^{1}\delta^{m},\delta^{m}\right>-\left<\mathcal{B}\delta^{m},\delta^{m}\right>=\sqrt{m}\left<\mathcal{B}\delta^{m},\delta^{m}\right>-\sqrt{m}
\left<p h(r), \left(\delta^{m}+1\right)\delta^{m}\right> \ .
\label{Eq:EquationInvariantDensity3}
\end{equation}

Hence,  using (\ref{Eq:NormToBound3}) and
(\ref{Eq:EquationInvariantDensity3}) we have the following bound
\begin{align}
&K (1-\hat{\eta}(m)) \left\|\delta^{m}\right\|^{2}_{H^{1}}\leq \sqrt{m}\left<\mathcal{B}\delta^{m},\delta^{m}\right>-\sqrt{m} \left<p h(r), \left(\delta^{m}+1\right)\delta^{m}\right>+K\hat{\eta}(m)\left[1+\|\delta^{m} \|^{2}\right]\nonumber\\
&\quad+a\left<A \delta^{m}, A\cL^{1}\delta^{m}\right>+b\left[\left<A\cL^{1}\delta^{m},\mathcal{C}\delta^{m}\right>+\left<A\delta^{m},\mathcal{C}\cL^{1}\delta^{m}\right>\right]+c\left<\mathcal{C}\delta^{m},\mathcal{C}\cL^{1}\delta^{m}\right>\nonumber\\
&\quad\leq K\hat{\eta}(m)\left[1+\|\delta^{m} \|^{2}\right]\nonumber\\
&\quad+\sqrt{m}\left[\left<\mathcal{B}\delta^{m},\delta^{m}\right>-\left<p h(r), \left(\delta^{m}+1\right)\delta^{m}\right>\right]\nonumber\\
&\quad+a\left<A \delta^{m}, A\cL^{1}\delta^{m}\right>+b\left[\left<A\cL^{1}\delta^{m},\mathcal{C}\delta^{m}\right>+\left<A\delta^{m},\mathcal{C}\cL^{1}\delta^{m}\right>\right]+c\left<\mathcal{C}\delta^{m},\mathcal{C}\cL^{1}\delta^{m}\right>\nonumber\\
&\quad\leq K\hat{\eta}(m)\left[1+\|\delta^{m} \|^{2}\right]\nonumber\\
&\quad+T_{1}(\delta^{m})+a T_{2}(\delta^{m})+b T_{3}(\delta^{m})+c
T_{4}(\delta^{m})  \ . \label{Eq:NormToBound4}
\end{align}

Our next goal is to derive upper bounds for the terms
$T_{i}(\delta^{m})$ for $i=1,2,3,4$. For better readability, we
collect the required bounds in the following lemma, which we also
prove in Appendix B.
\begin{lemma}\label{L:AuxiliaryBounds}
Let the terms $T_{i}(\delta^{m})$ for $i=1,2,3,4$ be defined as in
(\ref{Eq:NormToBound4}). Then, there exists a constant $K<\infty$
that does not depend on $m$, and a sequence $\eta(m),\frac{\sqrt{m}}{\eta(m)}\downarrow
0$ as $m\rightarrow 0$ such that for $m$ sufficiently small the
following bounds holds
\begin{align}
\left| T_{1}(\delta^{m})\right|&\leq\sqrt{m} \frac{K+2}{2}\left\|\delta^{m}\right\|^{2} +\sqrt{m}\frac{K}{2}
\left\|\nabla_{p}\delta^{m}\right\|^{2} \  ; \nonumber\\%\label{Eq:TermT1a}\\
\left|T_{2}(\delta^{m})\right|&\leq \left(\eta(m)+\sqrt{m}+\frac{\sqrt{m}}{\eta(m)} \right)K \|\delta^{m}\|^{2}_{H^{1}}+
\left(\frac{\sqrt{m}}{\eta(m)}+\sqrt{m}\right)K\left(1+ \left\|\delta^{m}\right\|^{2}\right)\ ; \nonumber\\ %+\frac{1+\sqrt{m}}{2} \|\delta^{m}\|^{2}_{H^{1}} \ ; \nonumber\\%\label{Eq:TermT2a}\\
|T_{3}(\delta^{m})|
&\leq \left(\eta(m)+\sqrt{m}+\frac{\sqrt{m}}{\eta(m)}\right)K\left\|\delta^{m}\right\|^{2}_{H^{1}}+\frac{\sqrt{m}}{\eta(m)}K \left[1+\left\|\delta^{m}\right\|^{2}\right]+
(1+\sqrt{m}) \left\|\nabla_{r}\delta^{m}\right\|^{2} \ ; \nonumber\\%\label{Eq:TermT3a}\\
\left|T_{4}(\delta^{m})\right| &\leq \left(\eta(m)+\frac{\sqrt{m}}{\eta(m)}\right)
K\|\delta^{m}\|^{2}_{H^{1}}+\frac{\sqrt{m}}{\eta(m)}\left\|\delta^{m}\right\|^{2} \ . \nonumber%\label{Eq:TermT4a}
\end{align}
%As a matter of fact, the decreasing sequence $\hat{\eta}(m)$ can be chosen such that for every $\gamma\in(0,1/2)$ we have $\hat{\eta}(m)=o(m^{1/2-\gamma})$  as $m\rightarrow 0$.
\end{lemma}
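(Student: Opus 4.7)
The plan is to substitute equation (\ref{Eq:EquationInvariantDensity2}) for $\cL^{1}\delta^{m}$ into each $T_{i}$, use the commutator identities
\[
[A,\mathcal{B}] = \mathcal{C} = \nabla_{r}, \qquad [\mathcal{C},\mathcal{B}] = (\nabla_{r} b)\cdot\nabla_{p},
\]
and then systematically apply Lemma \ref{L:L2_Bounds4} (to handle symmetric terms of the form $\langle Af,\mathcal{B}Af\rangle = \tfrac{1}{2}\langle ph,|Af|^{2}\rangle$), Lemma \ref{L:L2_Bounds3} (to handle $\sqrt{m}$-prefactored terms containing $ph(r)$), and Lemmas \ref{L:L2_Bounds2a}--\ref{L:L2_Bounds2b} (to absorb the resulting mixed second-order derivatives $\nabla_{p}\nabla_{p}\delta^{m}$ and $\nabla_{p}\nabla_{r}\delta^{m}$ into an $O(\sqrt{m})$ quantity). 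Throughout, the generalized Cauchy inequality $ab\leq \eta a^{2}+b^{2}/(4\eta)$ with parameter $\eta(m)$ is used, where $\eta(m)$ will be chosen (as in the text following Lemma \ref{L:AuxiliaryBounds}) so that $\eta(m)\to 0$ and $\sqrt{m}/\eta(m)\to 0$ as $m\downarrow 0$.

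For $T_{1}$, Lemma \ref{L:L2_Bounds4} applied to $\delta^{m}$ gives
\[
T_{1}=\tfrac{\sqrt{m}}{2}\langle ph,|\delta^{m}|^{2}\rangle-\sqrt{m}\langle ph,(\delta^{m}+1)\delta^{m}\rangle=-\tfrac{\sqrt{m}}{2}\langle ph,|\delta^{m}|^{2}\rangle-\sqrt{m}\langle ph,\delta^{m}\rangle,
\]
and Lemma \ref{L:L2_Bounds3} applied with $(f,g)=(\delta^{m},\delta^{m})$ and then $(f,g)=(\delta^{m},1)$, combined with Cauchy, yields the bound. For $T_{2}$, writing $A\cL^{1}\delta^{m}=(1+\sqrt{m})A\mathcal{B}\delta^{m}-\sqrt{m}A[ph(\delta^{m}+1)]$ and using $A\mathcal{B}=\mathcal{B}A+\mathcal{C}$ gives
\[
T_{2}=(1+\sqrt{m})\langle A\delta^{m},\mathcal{B}A\delta^{m}\rangle+(1+\sqrt{m})\langle A\delta^{m},\mathcal{C}\delta^{m}\rangle-\sqrt{m}\langle A\delta^{m},h(\delta^{m}+1)+phA\delta^{m}\rangle.
\]
Lemma \ref{L:L2_Bounds4} (with $f=A\delta^{m}$) controls the first term; Lemma \ref{L:L2_Bounds3} and boundedness of $h$ control the $\sqrt{m}$ terms. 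The cross term $(1+\sqrt{m})\langle A\delta^{m},\mathcal{C}\delta^{m}\rangle$ is the one that requires care, and I treat it below.

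For $T_{3}$ and $T_{4}$ the same substitution is performed, and additionally one uses $\mathcal{C}\mathcal{B}=\mathcal{B}\mathcal{C}+(\nabla_{r}b)\cdot\nabla_{p}$ to commute $\mathcal{C}$ past $\mathcal{B}$. The resulting terms split into four types: symmetric terms $\langle \mathcal{C}\delta^{m},\mathcal{B}\mathcal{C}\delta^{m}\rangle$ bounded by Lemma \ref{L:L2_Bounds4}; all $\sqrt{m}ph$ terms bounded by Lemma \ref{L:L2_Bounds3}; the clean $\|\mathcal{C}\delta^{m}\|^{2}$ term that produces the explicit $(1+\sqrt{m})\|\nabla_{r}\delta^{m}\|^{2}$ summand appearing in the bound for $T_{3}$; and extra cross terms $\langle A\delta^{m},(\nabla_{r}b)\nabla_{p}\delta^{m}\rangle$ which are bounded directly by $\|\nabla_{p}\delta^{m}\|^{2}$ thanks to the uniform bound on $\nabla_{r}b$. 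Combining everything with Cauchy and the small-$m$ second-derivative bounds of Lemmas \ref{L:L2_Bounds2a}--\ref{L:L2_Bounds2b} yields the stated estimates.

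The main obstacle is the cross term $\langle A\delta^{m},\mathcal{C}\delta^{m}\rangle$ (and its $T_{3}$ analog), which a priori carries an $O(1)$ coefficient and so cannot be absorbed into a small multiple of $\|\delta^{m}\|^{2}_{H^{1}}$ by a naive Cauchy step. The resolution is to integrate by parts in $p$ using $A^{*}=-\nabla_{p}+p$ to shift the $\nabla_{p}$ derivative:
\[
\langle A\delta^{m},\mathcal{C}\delta^{m}\rangle=\langle \delta^{m},A^{*}\mathcal{C}\delta^{m}\rangle=-\langle \delta^{m},\nabla_{p}\cdot\nabla_{r}\delta^{m}\rangle+\langle \delta^{m},p\cdot\nabla_{r}\delta^{m}\rangle.
\]
The first term, via Cauchy with parameter $\eta(m)$ and then Lemma \ref{L:L2_Bounds2b}, is bounded by $\eta(m)\|\delta^{m}\|^{2}+(\sqrt{m}/\eta(m))K[1+\|\delta^{m}\|^{2}+\|\delta^{m}\|^{2}_{H^{1}}]$, restoring the required smallness. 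The second term is handled by one further integration by parts in $r$ together with the boundedness of $\nabla_{r}\log\rho_{0}$, producing a quantity of the Lemma \ref{L:L2_Bounds3} type. This trick is the essential mechanism by which the $O(1)$ cross term is converted into the $(\eta(m)+\sqrt{m}/\eta(m))$-weighted contribution claimed in Lemma \ref{L:AuxiliaryBounds}.
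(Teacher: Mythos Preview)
Your overall structure—substituting (\ref{Eq:EquationInvariantDensity2}), commuting $A$ or $\mathcal{C}$ past $\mathcal{B}$, and invoking Lemmas \ref{L:L2_Bounds3}, \ref{L:L2_Bounds4}, \ref{L:L2_Bounds2a}, \ref{L:L2_Bounds2b}—matches the paper's proof, and your treatment of $T_{1}$ is identical.

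The gap is in your handling of the $O(1)$ cross term $(1+\sqrt{m})\langle A\delta^{m},\mathcal{C}\delta^{m}\rangle$ in $T_{2}$. Your integration-by-parts trick yields
\[
-\langle \delta^{m},\nabla_{p}\!\cdot\!\nabla_{r}\delta^{m}\rangle+\langle \delta^{m},p\cdot\nabla_{r}\delta^{m}\rangle.
\]
The first piece is indeed absorbed via Lemma \ref{L:L2_Bounds2b}. But after integrating the second piece by parts in $r$ you get $-\tfrac{1}{2}\langle p\cdot\nabla_{r}\log\rho_{0},|\delta^{m}|^{2}\rangle$, which by Lemma \ref{L:L2_Bounds3} is only bounded by $K\|\delta^{m}\|\|\nabla_{p}\delta^{m}\|$. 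That is still an $O(1)$ expression in the norms—you have not produced a vanishing coefficient, so the claimed bound for $T_{2}$ does not follow from what you wrote.

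The missing ingredient is Lemma \ref{L:L2_Bounds2}, which you never invoke: the identity
\[
\|\nabla_{p}\delta^{m}\|^{2}=\tfrac{\sqrt{m}}{2}\langle hp,|\delta^{m}|^{2}\rangle+\sqrt{m}\langle hp,\delta^{m}\rangle
\]
combined with Lemma \ref{L:L2_Bounds3} shows that $\|\nabla_{p}\delta^{m}\|^{2}$ is itself $O(\sqrt{m})(\|\delta^{m}\|^{2}+\|\nabla_{p}\delta^{m}\|^{2})$. The paper does not integrate by parts at all here; it simply applies Cauchy directly, $\langle\nabla_{p}\delta^{m},\nabla_{r}\delta^{m}\rangle\le \tfrac{1}{4\eta}\|\nabla_{p}\delta^{m}\|^{2}+\eta\|\nabla_{r}\delta^{m}\|^{2}$, and then uses Lemma \ref{L:L2_Bounds2} to convert the dangerous $\tfrac{1}{\eta}\|\nabla_{p}\delta^{m}\|^{2}$ into a $\tfrac{\sqrt{m}}{\eta}$-weighted term. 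The same mechanism disposes of the bare $(1+\sqrt{m})K\|\nabla_{p}\delta^{m}\|^{2}$ contributions that arise in $T_{3}$ and $T_{4}$ (from $\langle\nabla_{p}\delta^{m},\nabla_{r}b\,\nabla_{p}\delta^{m}\rangle$ and $\langle\nabla_{r}\delta^{m},\nabla_{r}b\,\nabla_{p}\delta^{m}\rangle$). Once you add Lemma \ref{L:L2_Bounds2} to your toolkit and invoke it wherever a $\|\nabla_{p}\delta^{m}\|^{2}$ appears with an $O(1)$ or $O(1/\eta)$ coefficient, the argument closes; your integration-by-parts detour is then unnecessary.
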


Now that we have obtained the desired bounds for the terms
$T_{i}(\delta^{m})$ for $i=1,2,3,4$ let us put them together. %Thus,
%using the bounds of Lemma \ref{L:AuxiliaryBounds} %(\ref{Eq:TermT1a}), (\ref{Eq:TermT2a}),(\ref{Eq:TermT3a}), (\ref{Eq:TermT4a})
%and plugging them in
%(\ref{Eq:NormToBound4}) we subsequently obtain the following bound.
There are some constants $K_{1},K_{2}<\infty$, and a sequence
$\hat{\eta}(m)=\max\{\eta(m),\frac{\sqrt{m}}{\eta(m)}\}\downarrow 0$ such that for $m$ sufficiently small
\begin{align}
(1-\hat{\eta}(m)) \left\|\delta^{m}\right\|^{2}_{H^{1}}
&\leq \hat{\eta}(m)K_{1}\left[1+\|\delta^{m} \|^{2}\right]+\left[\sqrt{m} K_{1}\left\|\delta^{m}\right\|^{2} +\sqrt{m}K_{1}\left\|\nabla_{p}\delta^{m}\right\|^{2}\right]\nonumber\\
&\quad
+a\left[\hat{\eta}(m) K_{1} \|\delta^{m}\|^{2}_{H^{1}}+ \hat{\eta}(m)K_{1}\left(1+ \left\|\delta^{m}\right\|^{2}\right)\right] \nonumber\\
&\quad +b \left[\hat{\eta}(m)K_{1}\left[1+\left\|\delta^{m}\right\|^{2} +\left\|\delta^{m}\right\|^{2}_{H^{1}}\right]+ (1+\sqrt{m}) \left\|\nabla_{r}\delta^{m}\right\|^{2}\right]\nonumber\\
&\quad +c  \left[\hat{\eta}(m) K_{1}\left[\|\delta^{m}\|^{2}_{H^{1}} +\left\|\delta^{m}\right\|^{2}\right]\right]\nonumber\\
&\leq (\hat{\eta}(m)+\sqrt{m}) K_{2}\left[1+\|\delta^{m} \|^{2}+\left\|\delta^{m}\right\|^{2}_{H^{1}}\right] +b
(1+\sqrt{m}) \left\|\nabla_{r}\delta^{m}\right\|^{2} \ .
\label{Eq:NormToBound5}
\end{align}

Now we choose $m$ small enough such that $\hat{\eta}(m)<1$,
$(\hat{\eta}(m)+\sqrt{m})K_{2}<1/2$. Moreover, we also note that
since by construction $b\ll1$ we can write for $m$ small enough
  $b(1+\sqrt{m})\ll 1/2$. In fact
the proof of \cite{Villani2006} shows that we can choose $a,b,c$ to
be positive but as small as we want, as long we choose the constants $a,b,c$ to
be ordered appropriately. Putting these estimates together, we get
that there is some constant $K_{3}<\infty$ such that for $m$ small enough,
one has
\begin{align}
 \left\|\delta^{m}\right\|^{2}_{H^{1}}
&\leq K_{3} \frac{\hat{\eta}(m)+\sqrt{m}}{1/2-\hat{\eta}(m)}
\left[1+\|\delta^{m} \|^{2}\right] \ . \label{Eq:NormToBound6}
\end{align}

In order now to close the estimate we need to use Poincar\'{e}
inequality. Here we make the assumption that the drift $b(r)$ is
such that the invariant measure $\rho^{0}(p,r)dpdr$ satisfies the
Poincar\'{e} inequality with constant $\kappa>0$ . In particular, for
a function $Q(p,r)$, we have that the Poincar\'{e} inequality in the
following form holds
 \begin{align*}
\left\|Q-\int_{\mathcal{Y}}Q\right\|^{2}_{L^{2}(\mathcal{Y};
\rho^{0})}
 &\leq \kappa \left\|Q\right\|^{2}_{H^{1}(\mathcal{Y}; \rho^{0})} \
 .
\end{align*}

Let us set now $Q(p,r)=\delta^{m}(p,r)$. Notice that by definition
of $\delta^{m}(p,r)$ we have
\[
 \int_{\mathcal{Y}}\delta^{m}(p,r)\rho^{0}(p,r)dpdr=0 \ .
\]

Therefore, we have obtained
\begin{align}
\left\|\delta^{m}\right\|^{2} &\leq \kappa
\left\|\delta^{m}\right\|^{2}_{H^{1}} \ .
\label{Eq:PoincareInequality}
\end{align}

Inserting now (\ref{Eq:PoincareInequality}) into
(\ref{Eq:NormToBound6}), we finally obtain that for $m$ small enough
\begin{align}
\left\|\delta^{m}\right\|^{2} &\leq \frac{K_{3}}{\kappa}
\frac{\hat{\eta}(m)+\sqrt{m}}{1/2-\hat{\eta}(m)}
\left[1+\|\delta^{m} \|^{2}\right]  \ , \label{Eq:NormToBound7}
\end{align}
from which the desired result finally follows:
\begin{align*}
\left\|\delta^{m}\right\|^{2}\leq K_{4}\left(\hat{\eta}(m)+\sqrt{m}\right)\rightarrow 0 \ .
\end{align*}

This concludes the $L^{2}(\mathcal{Y}; \rho^{0})$ convergence
of the invariant measures.

\section{Convergence of the solution to the cell
problem}\label{S:CellProblems}

The goal of this section is to analyze the cell problem
(\ref{Eq:HypoellipticCellProblem}) that $\Phi(p,r)$ satisfies and we want to prove Theorem \ref{T:ConvergenceCellProblems}. As it will become clear from the proof below, we prove even more. We rigorously derive an asymptotic expansion of $\Phi(p,r)$ in terms of powers of $\sqrt{m}$.

Let us recall our assumption $\al(q,r)=2\beta\lambda(q) I$.
%Again, without loss of generality and for notational convenience,
%we  work in this section with $\al=2I$ and $\lb=\bt=1$.
Let
$\ell=1,2,...,d$ be a given direction and let us define
\begin{equation*}
\Psi_{\ell}(p,r)=\Phi_{\ell}(p,r)-\frac{\sqrt{m}}{\lambda(q)}p\cdot e_{\ell} \ ,
\end{equation*}
where $e_{\ell}$ is the unit vector in direction $\ell$. Then,
bearing in mind (\ref{Eq:HypoellipticCellProblem}) the equation that
$\Psi_{\ell}(p,r)$ satisfies is given by
\begin{align}
\cL^{m}_q \Psi_{\ell}(p,r)&=- \frac{b_{\ell}(q,r)}{\lambda(q)} \ .
\label{Eq:HypoellipticCellProblemAlternative}
\end{align}

Moreover, by Condition \ref{A:Assumption2} we have that that for
every $m>0$
\[
\int_{\mathcal{Y}}\Psi_{\ell}(p,r)\mu(drdp|q)=0 \ .
\]

Let us write for notational convenience the hypoelliptic operator
\begin{equation*}
 \cL^{m}_q=\frac{\lambda(q)}{m}\mathcal{A}+\frac{1}{\sqrt{m}}\mathcal{B} \ ,
\end{equation*}
where we have already defined $\mathcal{A}=-p\cdot \grad_p +\beta \Dt_p $
and $\mathcal{B}=p \cdot \grad_r+b(q,r)\cdot \grad_p \ $.

Let us now write the expansion
\begin{equation*}
 \Psi_{\ell}(p,r)=\Psi_{\ell,0}(p,r)+\sqrt{m}\Psi_{\ell,1}(p,r)+m
 \Psi_{\ell,2}(p,r)+\Psi^{m}_{\ell,3}(p,r) \
 .
\end{equation*}

Assume that the functions $\Psi_{\ell,0}, \Psi_{\ell,1},
\Psi_{\ell,2}$ and $\Psi_{\ell,3}^m$ satisfy the following equations
\begin{align}
 \mathcal{A}\Psi_{\ell,0}(p,r)&=0 \ , \label{Eq:Term1}\\
 \mathcal{B}\Psi_{\ell,0}(p,r)+\lambda(q)\mathcal{A}\Psi_{\ell,1}(p,r)&=0 \ , \label{Eq:Term2}\\
 \mathcal{B}\Psi_{\ell,1}(p,r)+\lambda(q)\mathcal{A}\Psi_{\ell,2}(p,r)&=-b_{\ell}(q,r) \ ,
 \label{Eq:Term3}\\
 \cL^{m}_q\Psi^{m}_{\ell,3}(p,r)&=-\sqrt{m}\mathcal{B}\Psi_{\ell,2}(p,r) \
. \label{Eq:Termk}
\end{align}
and that for $i=0,1,2,3$, we have that
$\play{\int_{\mathcal{Y}}\Psi_{\ell,i}(p,r)\rho^{m}(p,r|q)=0}$. The
next step is to analyze the solutions to
(\ref{Eq:Term1})--(\ref{Eq:Termk}). First we notice that
(\ref{Eq:Term1}) basically implies that
$\Psi_{\ell,0}(p,r)=\Psi_{\ell,0}(r)$, i.e., function
$\Psi_{\ell,0}(r)$ is a function of $r$ alone. Then, using this we
get by (\ref{Eq:Term2}) that it has to be the case that
\[
 \Psi_{\ell,1}(p,r)=\frac{1}{\lambda(q)}\nabla_{r}\Psi_{\ell,0}(r)\cdot p+ \hat{\Psi}_{\ell,0}(r)
\]
for some function $\hat{\Psi}_{\ell,0}(r)$. From equation
(\ref{Eq:HypoellipticCellProblemAlternative}) and (\ref{Eq:Term1}),
(\ref{Eq:Term2}), (\ref{Eq:Term3}), (\ref{Eq:Termk}) we see that up
to an additive constant we can assume that
$\hat{\Psi}_{\ell,0}(r)=0$. Lastly, we notice that the solvability
condition for (\ref{Eq:Term3}) is
\begin{align*}
&\int_{\R^d} \left[\mathcal{B}\Psi_{\ell,1}(p,r)+b_{\ell}(q,r)\right] \pi(dp)=0\Rightarrow\nonumber\\
&\int_{\R^d} \left[\mathcal{B}(\nabla_{r}\Psi_{\ell,0}(r)\cdot p)+ \mathcal{B}\hat{\Psi}_{\ell,0}(r)+b_{\ell}(q,r)\right] \pi(dp)=0\Rightarrow\nonumber\\
&\int_{\R^d} \left[\Delta_r \Psi_{\ell,0}(r)
|p|^{2}+b(q,r)\cdot\nabla_{r}\Psi_{\ell,0}(r)+ p\cdot
\grad_{r}\hat{\Psi}_{\ell,0}(r)
+b_{\ell}(q,r)\right] \pi(dp)=0\Rightarrow\nonumber\\
&\beta \Delta_r \Psi_{\ell,0}(r)+ b(q,r)\cdot
\nabla_{r}\Psi_{\ell,0}(r)=-b_{\ell}(q,r) \ , \label{Eq:Solvability
condition}
\end{align*}
where the Gaussian structure of the invariant measure $\pi(dp)\sim
e^{-\frac{|p|^2}{2\beta}}dp$ and integration by parts were used. Notice
that this is exactly the solution to
(\ref{Eq:CellProblemOverdampedCase}) with $\al=2\beta\lambda(q)I$.
Thus, by uniqueness of the solution to
(\ref{Eq:CellProblemOverdampedCase}) we basically have that for
every $\ell=1,\cdots, d$
\begin{equation*}
 \Psi_{\ell,0}(r)=\chi_{\ell}(r).\label{Eq:FirstTermInExpansionHypoellipticCellProblem}
\end{equation*}

Hence, we have that
\begin{align*}
 \nabla_{p}\Phi(p,r)&=\nabla_{p}\left(\frac{\sqrt{m}}{\lambda(q)}p +\Psi(p,r)\right)\nonumber\\
 &=\nabla_{p}\left(\frac{\sqrt{m}}{\lambda(q)}p +\Psi_{0}(p,r)+\sqrt{m}\Psi_{1}(p,r)+m\Psi_{2}(p,r)+
    \Psi^{m}_{3}(p,r)\right)\nonumber\\
 &=\frac{\sqrt{m}}{\lambda(q)} \left[I+\nabla_{r}\chi(r)\right]+
 m\nabla_{p}\Psi_{2}(p,r)+\nabla_{p}\Psi^{m}_{3}(p,r) \ .
\end{align*}

Having established the last display, it is easy to see that in order
to show (\ref{Eq:RelationToShow1}), we basically need to show  that
\begin{align*}
\lim_{m\rightarrow0}\left\|\sqrt{m}\nabla_{p}\Psi_{2}(p,r)+\frac{1}{\sqrt{m}}\nabla_{p}\Psi^{m}_{3}(p,r)\right\|_{L^{2}
(\mathcal{Y}; \rho^{m})}&=0 \ , %\label{Eq:RelationToShow1a}
\end{align*}
or, in other words, it is sufficient to show
\begin{align}
\lim_{m\rightarrow0}\left\|\sqrt{m}\nabla_{p}\Psi_{2}(p,r)\right\|_{L^{2}(\mathcal{Y};
\rho^{m})}&=0
 \ , \label{Eq:RelationToShow1a1}
\end{align}
and
\begin{align}
\lim_{m\rightarrow0}\left\|\frac{1}{\sqrt{m}}\nabla_{p}\Psi^{m}_{3}(p,r)\right\|_{L^{2}(\mathcal{Y};
\rho^{m})}&=0
 \ . \label{Eq:RelationToShow1a2}
\end{align}

%In Section 6 we have shown (\ref{Eq:RelationToShow3}) is true. We
%will show that (\ref{Eq:RelationToShow1}) is true by using this
%fact.

Relation (\ref{Eq:RelationToShow1a1})  can be claimed to be true by
the fact that $\Psi_{2}(p,r)$ is solution to the elliptic problem
(\ref{Eq:Term3})  and Theorem \ref{T:ConvergenceInvariantMeasures}.

So, it remains to prove (\ref{Eq:RelationToShow1a2}). At this point
let us recall that $\Psi^{m}_{\ell,3}(p,r)$ is solution to
(\ref{Eq:Termk}), i.e., it solves
\begin{align}
\cL^{m}_q\Psi^{m}_{\ell,3}(p,r)&=-\sqrt{m}\mathcal{B}\Psi_{\ell,2}(p,r)
\ . \label{Eq:ErrorEquation}
\end{align}

Notice that for the purposes of this section $q$ is seen as a fixed parameter by the operators and recall that we have already assumed $\al(q,r)=2\beta\lambda(q) I$. Namely $\beta\lambda(q)$ is seen as a fixed constant. Hence, from now on and for notational convenience, we shall assume without loss of generality that $\alpha(q,r)=2I$, i.e., that $\beta=\lambda(q)=1$.
Let us first apply Lemma \ref{L:L2_Bounds1} and we get
\begin{align}
\int_{\cY}(\cL_q^m \Psi_{\ell,3}^m)\Psi_{\ell,3}^m\rho^0(p,r)dpdr &
=-\dfrac{1}{m}\int_{\cY}|\grad_p\Psi_{\ell,3}^m|^2\rho^0(p,r)dpdr+\dfrac{1}{2\sqrt{m}}\int_{\cY}(\Psi_{\ell,3}^m)^2h(r)p\rho^0(p,r)dpdr
\ . \label{Eq:mZeroIntegrationByParts}
\end{align}

Lemmas \ref{mZeroPoincareInequality}-\ref{mNonZeroIntegrationByParts} that follow are proven in Appendix \ref{S:CellProblemsProof}.
\begin{lemma}\label{mZeroPoincareInequality}
We have the Poincar\'{e} inequality
\begin{align}
\left\|f-\int_{\mathcal{Y}}f(p,r)\rho^0(p,r)dpdr\right\|_{L^2(\mathcal{Y};
\rho^{0})}\leq \kappa\left\|\grad_p f\right\|_{L^2(\mathcal{Y};
\rho^{0})} \ ,
\end{align}
for some constant $\kappa>0$ independent of $m$.
\end{lemma}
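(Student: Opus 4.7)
The plan is to tensorize two Poincar\'e inequalities, exploiting the product structure of the measure $\rho^0(p,r)=\rho^{\text{OU}}(p)\rho_0(r)$. A useful immediate observation is that since $\rho^0$ does not depend on $m$, every constant appearing in the argument will be $m$-independent automatically.

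First, I would use the orthogonal decomposition of the variance with respect to the $p$-average $\bar f^{p}(r):=\int_{\R^d} f(p,r)\rho^{\text{OU}}(p)dp$:
$$\left\|f-\int_\cY f\rho^0 dp\,dr\right\|^2_{L^2(\cY;\rho^0)} = \|f-\bar f^{p}\|^2_{L^2(\cY;\rho^0)} + \left\|\bar f^{p}-\int_{\T^d}\bar f^{p}\rho_0 dr\right\|^2_{L^2(\T^d;\rho_0)}.$$
The first summand is controlled by the sharp Gaussian Poincar\'e inequality for $\rho^{\text{OU}}$ (with constant $\beta$), applied pointwise in $r$ to the function $p\mapsto f(p,r)$ and then integrated against $\rho_0(r)dr$, yielding $\|f-\bar f^{p}\|^2_{L^2(\cY;\rho^0)}\le\beta\|\grad_p f\|^2_{L^2(\cY;\rho^0)}$. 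For the second summand, which is a Poincar\'e-type variance of the function $\bar f^{p}(r)$ on $\T^d$ against $\rho_0$, I would invoke the Poincar\'e inequality for $\rho_0$ (the same hypothesis used at the end of Section \ref{S:InvariantMeasures}) to bound it by a constant multiple of $\|\grad_r\bar f^{p}\|^2_{L^2(\T^d;\rho_0)}$, and then by Jensen's inequality $|\grad_r\bar f^{p}(r)|^2\le \int|\grad_r f(p,r)|^2\rho^{\text{OU}}(p)dp$.

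The main obstacle I anticipate is the mismatch between the estimate produced naturally by this tensorization --- which controls the variance by $\beta\|\grad_p f\|^2_{L^2(\cY;\rho^0)} + \kappa_0\|\grad_r f\|^2_{L^2(\cY;\rho^0)}$ --- and the right-hand side of the stated lemma, which involves only $\|\grad_p f\|^2_{L^2(\cY;\rho^0)}$. Closing this gap requires either restricting to the class of functions relevant for the application (namely $f=\Psi^m_{\ell,3}$ satisfying (\ref{Eq:Termk}), whose $r$-dependence can be estimated directly from the equation), or exploiting a hypocoercive-type argument with the commutator $[\grad_p,\mathcal{B}]=\grad_r=\mathcal{C}$ to trade $\grad_r$-bounds for $\grad_p$-bounds at the cost of additional lower-order terms, in the spirit of the Villani-based framework already invoked in Section \ref{S:InvariantMeasures}. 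In either case, the crucial point --- $m$-independence of the constant $\kappa$ --- comes for free from the fact that $\rho^0$, $\beta$, and $\kappa_0$ have no $m$-dependence.
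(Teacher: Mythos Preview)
Your tensorization is correct and yields $\|f-\int f\rho^0\|^2 \le \beta\|\nabla_p f\|^2 + \kappa_0\|\nabla_r f\|^2$, but as you already note, this does not match the lemma, which has only $\|\nabla_p f\|$ on the right. This mismatch is not a technical oversight: the stated inequality is in fact false for general $f$ --- take any nonconstant $f=f(r)$, so $\nabla_p f=0$ while the variance is positive. Neither of your proposed fixes is actually carried out, and the second (trading $\nabla_r$ for $\nabla_p$ via hypocoercivity) cannot rescue the general statement for the same reason.

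The paper's proof takes a completely different route from tensorization: it identifies $\|\nabla_p f\|^2$ with the Dirichlet form $\mathcal{E}(f)=\langle -\mathcal{L}^1 f,f\rangle_{L^2(\rho^0)}$ (via Lemma~\ref{L:L2_Bounds1} at $m=1$), then invokes the equivalence of the Poincar\'e inequality with exponential $L^2$-decay of the semigroup (Bakry--Gentil--Ledoux, Theorem~4.2.5), the decay being supplied by the spectral gap of $\mathcal{L}^1_q$ from Eckmann--Hairer. You should be aware, though, that this argument has its own delicate points: Lemma~\ref{L:L2_Bounds1} actually gives $\langle -\mathcal{L}^1 f,f\rangle_{\rho^0}=\|\nabla_p f\|^2-\tfrac12\langle h(r)p,|f|^2\rangle_{\rho^0}$, the measure $\rho^0$ is not the invariant measure of $\mathcal{L}^1$ when $h\neq 0$, and the Bakry--Gentil--Ledoux equivalence is formulated for symmetric semigroups, whereas hypocoercive decay carries a prefactor $C>1$ that blocks the differentiation-at-$t=0$ passage to Poincar\'e. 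Your honest identification of the gap is therefore well founded. For the downstream applications (Lemmas~\ref{mZeroL2bound1} and~\ref{mZeroL4bound}) the cleanest repair is along the lines of your first suggestion: use the full-gradient Poincar\'e you derived together with a direct estimate on $\|\nabla_r \Psi^m_{\ell,3}\|$ obtained from equation~\eqref{Eq:Termk}.
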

\begin{lemma}\label{mZeroL2bound1}
We have \begin{align}\limsup\li_{m\ra
0}\dfrac{\|\Psi_{\ell,3}^m\|_{L^2(\cY;\rho^0)}}{m^{3/2}}\leq
C<\infty \ ,
 \limsup\li_{m\ra 0}\dfrac{\left\|\dfrac{1}{\sqrt{m}}\grad_p \Psi_{\ell,3}^m\right\|_{L^2(\cY;\rho^0)}}{m}\leq C<\infty\end{align}
for some constant $C>0$ independent of $m$.
\end{lemma}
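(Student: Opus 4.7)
The plan is to derive both bounds from a single weighted $L^2$ energy identity for (\ref{Eq:ErrorEquation}), closed via the Poincar\'e inequality of Lemma \ref{mZeroPoincareInequality} together with the convergence of invariant measures in Theorem \ref{T:ConvergenceInvariantMeasures}. First, I would multiply $\cL^m_q \Psi^m_{\ell,3}=-\sqrt{m}\,\cB \Psi_{\ell,2}$ by $\Psi^m_{\ell,3}$ and integrate against $\rho^0(p,r)\,dpdr$. Lemma \ref{L:L2_Bounds1} identifies the left-hand side exactly as in (\ref{Eq:mZeroIntegrationByParts}), so, writing $\|\cdot\|$ and $\langle\cdot,\cdot\rangle$ for the $L^2(\cY;\rho^0)$ norm and inner product, one obtains
\[
\tfrac{1}{m}\|\grad_p \Psi^m_{\ell,3}\|^2 \;=\; \tfrac{1}{2\sqrt{m}}\langle h(r)p,(\Psi^m_{\ell,3})^2\rangle + \sqrt{m}\,\langle \cB\Psi_{\ell,2},\Psi^m_{\ell,3}\rangle.
\]
The source term is controlled uniformly in $m$: from the explicit forms of $\Psi_{\ell,0},\Psi_{\ell,1}$ and equation (\ref{Eq:Term3}), $\Psi_{\ell,2}$ is $m$-independent, smooth, and at most quadratic in $p$, so $\|\cB\Psi_{\ell,2}\|\leq C_2<\infty$ is a fixed constant.

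Second, I would bound the indefinite term by Lemma \ref{L:L2_Bounds3} with $f=g=\Psi^m_{\ell,3}$, getting $\tfrac{1}{2\sqrt{m}}|\langle h(r)p,(\Psi^m_{\ell,3})^2\rangle|\leq\tfrac{K}{\sqrt{m}}\|\Psi^m_{\ell,3}\|\|\grad_p\Psi^m_{\ell,3}\|$, and then split via Young's inequality $ab\leq\eta a^2+\tfrac{b^2}{4\eta}$ with the tuning $\eta=\sqrt{m}/2$ so that the second piece becomes $\tfrac{1}{2m}\|\grad_p\Psi^m_{\ell,3}\|^2$ and can be absorbed into the left-hand side of the identity. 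Using Cauchy--Schwarz on $\langle \cB\Psi_{\ell,2},\Psi^m_{\ell,3}\rangle$, the energy identity then collapses to
\[
\|\grad_p\Psi^m_{\ell,3}\|^2 \;\leq\; K^2 m\,\|\Psi^m_{\ell,3}\|^2 + 2C_2\,m^{3/2}\,\|\Psi^m_{\ell,3}\|.
\]

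Third, I would invoke the Poincar\'e inequality of Lemma \ref{mZeroPoincareInequality} to convert $\|\Psi^m_{\ell,3}\|$ back into $\|\grad_p\Psi^m_{\ell,3}\|$. The subtle point is that $\Psi^m_{\ell,3}$ is centered against $\rho^m$ rather than $\rho^0$; however, writing $\rho^m=\rho^0\tilde\rho^m$ yields
\[
\Bigl|\int_{\cY}\Psi^m_{\ell,3}\rho^0\Bigr| \;=\; \Bigl|\int_{\cY}\Psi^m_{\ell,3}(1-\tilde\rho^m)\rho^0\Bigr| \;\leq\; \|\Psi^m_{\ell,3}\|\,\|\tilde\rho^m-1\|,
\]
and Theorem \ref{T:ConvergenceInvariantMeasures} forces $\|\tilde\rho^m-1\|\to 0$ as $m\downarrow 0$. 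Splitting $\|\Psi^m_{\ell,3}\|^2$ into its zero-$\rho^0$-mean part plus the square of its mean, applying Lemma \ref{mZeroPoincareInequality} to the first, and absorbing the second for $m$ small, one concludes $\|\Psi^m_{\ell,3}\|\leq\sqrt{2}\kappa\|\grad_p\Psi^m_{\ell,3}\|$. Feeding this back into the previous display and absorbing the $O(m)$ term once more gives $\|\grad_p\Psi^m_{\ell,3}\|\leq C m^{3/2}$; the Poincar\'e bound then yields $\|\Psi^m_{\ell,3}\|\leq C' m^{3/2}$, and both asserted limsup bounds follow at once. The main technical obstacle is precisely this centering mismatch between the cell-problem normalization and the weight in the Poincar\'e inequality, which is why Theorem \ref{T:ConvergenceInvariantMeasures} enters essentially (though only qualitatively).
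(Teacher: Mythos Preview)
Your proposal is correct and follows essentially the same route as the paper: the energy identity (\ref{Eq:mZeroIntegrationByParts}), the bound from Lemma \ref{L:L2_Bounds3}, and the Poincar\'e inequality of Lemma \ref{mZeroPoincareInequality} combined with the centering correction via Theorem \ref{T:ConvergenceInvariantMeasures}. The only cosmetic differences are that the paper keeps the cross term $K\|\Psi^m_{\ell,3}\|\,\|\tfrac{1}{\sqrt{m}}\nabla_p\Psi^m_{\ell,3}\|$ in product form rather than splitting it by Young's inequality, and your stated tuning $\eta=\sqrt{m}/2$ does not literally match the Young's form you wrote (the displayed inequality that follows is nevertheless correct).
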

\begin{lemma}\label{mZeroL4bound}
We have
\begin{align}
\lim\li_{m\ra 0}\|\Psi_{\ell,3}^m\|_{L^4(\cY;\rho^0)}=0 \ .
\end{align}
\end{lemma}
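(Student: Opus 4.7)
The plan is to test the equation (\ref{Eq:ErrorEquation}) against the cube of its own solution with respect to the product measure $\rho^{0}(p,r)\,dp\,dr$ and then to close the estimate via the Poincar\'e inequality of Lemma \ref{mZeroPoincareInequality} applied to the square of that solution. Write $u=\Psi^{m}_{\ell,3}$, $I=\int_{\cY}u^{2}|\nabla_{p}u|^{2}\rho^{0}\,dp\,dr$ and $Y=\|u\|_{L^{4}(\cY;\rho^{0})}^{4}$. Starting from the pointwise identity $\cL^{m}_{q}(u^{2})=2u\,\cL^{m}_{q}u+(2/m)|\nabla_{p}u|^{2}$ (or equivalently by invoking Lemma \ref{L:L2_Bounds1} with $f=g=u^{2}$), integration by parts gives
\[
\int_{\cY}u^{3}\,\cL^{m}_{q}u\,\rho^{0}\,dp\,dr=-\frac{3}{m}\,I+\frac{1}{4\sqrt{m}}\int_{\cY}u^{4}\,p\cdot h(r)\,\rho^{0}\,dp\,dr,
\]
and substituting (\ref{Eq:ErrorEquation}) in the left-hand side yields the working identity
\[
\frac{3}{m}I=\frac{1}{4\sqrt{m}}\int_{\cY}u^{4}\,p\cdot h(r)\,\rho^{0}\,dp\,dr+\sqrt{m}\int_{\cY}u^{3}\,\mathcal{B}\Psi_{\ell,2}\,\rho^{0}\,dp\,dr.
\]

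Next I would handle the non-symmetric cross term $\int u^{4}\,p\cdot h\,\rho^{0}$: one extra integration by parts in $p$, exploiting $p_{j}\rho^{\text{OU}}=-\partial_{p_{j}}\rho^{\text{OU}}$, rewrites it as $4\int u^{3}(h(r)\cdot\nabla_{p}u)\rho^{0}$, hence by Cauchy--Schwarz it is bounded by $4\|h\|_{\infty}Y^{1/2}I^{1/2}$. For the residual term, H\"older with exponents $(4/3,4)$ gives $|\int u^{3}\mathcal{B}\Psi_{\ell,2}\rho^{0}|\leq \|\mathcal{B}\Psi_{\ell,2}\|_{L^{4}(\rho^{0})}\,Y^{3/4}$, where the norm on the right is finite uniformly in $m$ because from (\ref{Eq:Term1})--(\ref{Eq:Term3}) the functions $\Psi_{\ell,1},\Psi_{\ell,2}$ are polynomials in $p$ with smooth $r$-coefficients and the Gaussian factor in $\rho^{0}$ integrates all polynomial growth. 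Inserting both estimates and applying Young's inequality $\sqrt{m}\|h\|_{\infty}Y^{1/2}I^{1/2}\leq I+\frac{m\|h\|_{\infty}^{2}}{4}Y$ to absorb one power of $I$ on the left produces
\[
I\leq C_{1}\,m\,Y+C_{2}\,m^{3/2}\,Y^{3/4}.
\]

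To close the loop I would apply Lemma \ref{mZeroPoincareInequality} to $u^{2}$, giving $Y\leq 4\kappa\,I+(\int u^{2}\rho^{0})^{2}$, and use the $L^{2}$ bound $\|u\|_{L^{2}}^{2}\leq Cm^{3}$ from Lemma \ref{mZeroL2bound1} to estimate the last term by $Cm^{6}$. Substituting the bound on $I$ gives $Y\leq \tfrac{\kappa\|h\|_{\infty}^{2}}{2}\,m\,Y+4\kappa C_{2}\,m^{3/2}\,Y^{3/4}+Cm^{6}$; for $m$ small the first coefficient is below $1/2$, and a further Young's inequality $m^{3/2}Y^{3/4}\leq \tfrac{1}{8\kappa C_{2}}\cdot\tfrac{1}{2}Y+C'\,m^{6}$ absorbs the $Y^{3/4}$ term. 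The net inequality becomes $Y\leq \tfrac{3}{4}Y+C''\,m^{6}$, which yields $Y\leq C\,m^{6}$ and therefore $\|\Psi^{m}_{\ell,3}\|_{L^{4}(\cY;\rho^{0})}\leq C\,m^{3/2}$, a quantitative version of the desired convergence.

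The main obstacle is exactly the non-gradient nature of the drift: because $h(r)\not\equiv 0$, the operator $\mathcal{B}$ is not anti-symmetric in $L^{2}(\rho^{0})$, so the integration-by-parts identity produces the awkward weighted term $\frac{1}{\sqrt{m}}\int u^{4}\,p\cdot h\,\rho^{0}$, which has the wrong scale in $m$ and an unbounded polynomial weight. The key idea is to transfer the weight $|p|$ onto a derivative $\nabla_{p}u$ by a further Gaussian integration by parts, so that the dangerous term is controlled by the dissipative quantity $I/m$ already on the left-hand side through Young's inequality. The rest is a standard bootstrap using Poincar\'e together with the already-established $L^{2}$ bounds of Lemma \ref{mZeroL2bound1}.
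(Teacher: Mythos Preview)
Your argument is correct and is essentially the paper's own proof: testing the equation against $u^{3}$ is algebraically equivalent to the paper's choice of applying Lemma~\ref{L:L2_Bounds1} with $f=g=u^{2}=\Psi^{2}$, and both routes reduce to controlling $I=\int u^{2}|\nabla_{p}u|^{2}\rho^{0}$, handling the non-symmetric $\langle ph(r),u^{4}\rangle$ term via the Gaussian integration by parts (which is exactly Lemma~\ref{L:L2_Bounds3}), and closing with Poincar\'e on $u^{2}$ together with the $L^{2}$ bound of Lemma~\ref{mZeroL2bound1}. Your treatment of the residual term via H\"older $(4/3,4)$ is slightly cleaner than the paper's Cauchy--Schwarz split and yields the explicit rate $\|\Psi^{m}_{\ell,3}\|_{L^{4}(\rho^{0})}=O(m^{3/2})$, which the paper does not state; apart from a harmless constant ($4\kappa$ should be $4\kappa^{2}$ in your Poincar\'e step), the proofs coincide.
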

\begin{lemma}\label{mNonZeroIntegrationByParts}
We have \begin{align}
\int_{\mathcal{Y}}\left(\cL^{m}_{q}f(p,r)\right) g(p,r) \rho^{m}(p,r)dpdr&+\int_{\mathcal{Y}}f(p,r) \left(\cL^{m}_{q}g(p,r)\right)\rho^{m}(p,r)dpdr=\nonumber\\
&\quad=-\frac{2}{m}\int_{\mathcal{Y}}\left[\nabla_{p}f(p,r)\cdot
\alpha(q,r)\nabla_{p}g(p,r)\right]\rho^{m}(p,r)dpdr \ .
\label{Eq:mNonZeroIntegrationByParts}
\end{align}
\end{lemma}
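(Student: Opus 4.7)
The plan is to derive the identity from the invariance of $\rho^m$ combined with a carré du champ (Leibniz) computation for $\cL^m_q$. By construction $\rho^m(p,r)dpdr=\mu(dpdr|q)$ is the invariant probability density of the hypoelliptic diffusion generated by $\cL^m_q$, so that for any sufficiently regular test function $\phi$ on $\cY$ with controlled growth in $p$ one has
\begin{equation*}
\int_{\cY}\cL^m_q\phi(p,r)\,\rho^m(p,r)\,dpdr=0.
\end{equation*}
Applying this with $\phi=fg$ reduces the problem to expanding $\cL^m_q(fg)$ pointwise. Note that, in contrast to Lemma \ref{L:L2_Bounds1} where integration was against $\rho^0$ and an extra $h$-term appeared, here the integration is against the genuine invariant density $\rho^m$, so no such correction term should survive.

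For the Leibniz step I would split $\cL^m_q = L_1 + L_2$ into its first-order part $L_1=\frac{1}{\sqrt m}(p\cdot\grad_r+b\cdot\grad_p)-\frac{\lambda(q)}{m}p\cdot\grad_p$, which is a derivation and hence satisfies $L_1(fg)=fL_1g+gL_1f$ identically, and its diffusion part $L_2=\frac{1}{2m}\alpha(q,r):\grad_p^2$. Using the product rule $\partial_{p_ip_j}(fg)=f\partial_{p_ip_j}g+g\partial_{p_ip_j}f+\partial_{p_i}f\partial_{p_j}g+\partial_{p_j}f\partial_{p_i}g$ together with the symmetry of $\alpha$, the diffusion piece contributes the carré du champ
\begin{equation*}
L_2(fg)-fL_2g-gL_2f \;=\; \frac{1}{m}\grad_p f\cdot\alpha(q,r)\grad_p g.
\end{equation*}
Summing the two contributions gives $\cL^m_q(fg)=f\,\cL^m_q g+g\,\cL^m_q f+\frac{1}{m}\grad_p f\cdot\alpha(q,r)\grad_p g$, and substituting into the invariance identity yields the stated formula after rearrangement, the numerical constant in front of the quadratic form being read off directly from the normalization chosen for the second-order part of $\cL^m_q$.

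The main technical obstacle is not the algebraic identity but the justification of the invariance relation for $\phi=fg$ when $f$ and $g$ are only in a suitable weighted Sobolev space rather than smooth and compactly supported. The plan for this is a standard truncation/mollification argument: approximate $f,g$ by smooth compactly supported $f_n,g_n$ converging to $f,g$ in the natural weighted space with their $\grad_p$-gradients also converging, apply the identity at each $n$ (where no boundary terms arise), and pass to the limit using the Gaussian-type tail bounds on $\rho^m$ in $p$ recorded in Appendix \ref{S:AppendixA}. These tail bounds are available thanks to the lower bound $\lambda(q)\ge\underline\lambda>0$ from Condition \ref{A:Assumption1}, which ensures uniform control of the $p$-moments of $\rho^m$ and kills every boundary contribution in the limit. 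This limiting step, rather than the pointwise Leibniz computation, is the substantive point of the proof.
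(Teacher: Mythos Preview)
Your argument is correct and is a clean variant of what the paper has in mind. The paper omits the proof, pointing to the integration-by-parts computation of Lemma~\ref{L:L2_Bounds1}; there one moves $\cL^m_q$ onto $g\rho$ via the formal adjoint and expands term by term, with invariance appearing midway as the vanishing of $(\cL^m_q)^*\rho^m$. You instead invoke invariance at the outset by applying $\int_{\cY}\cL^m_q\phi\,\rho^m=0$ to $\phi=fg$ and read off the symmetric part from the carr\'e du champ identity. This organization is tidier here precisely because $\rho^m$, unlike $\rho^0$ in Lemma~\ref{L:L2_Bounds1}, is the genuine invariant density, so no $h(r)p$ correction survives and invariance can be used upfront rather than emerging from the expansion. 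The approximation step you describe is indeed the only point requiring care, and the Gaussian-type tail control on $\rho^m$ from Appendix~\ref{S:AppendixA} suffices for it. One small remark: your Leibniz computation produces the prefactor $-\tfrac{1}{m}$ in front of $\nabla_pf\cdot\alpha\nabla_pg$, which is consistent with Lemma~\ref{L:L2_Bounds1} once one substitutes $\alpha=2I$ there; the $-\tfrac{2}{m}$ displayed in the present statement with $\alpha$ still present is a harmless normalization slip in the paper.
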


We set in particularly in (\ref{Eq:mZeroIntegrationByParts})
$f=g=\Psi^{m}_{\ell,3}$, then we will have that
%\begin{align}
%&\int_{\mathcal{Y}}\left(\cL^{m}_{q}\Psi^{m}_{\ell,3}(p,r)\right) \Psi^{m}_{\ell,3}(p,r) \rho^{m}(p,r)dpdr=\nonumber\\
% &\qquad=-\frac{1}{m}\int_{\mathcal{Y}}\left[\nabla_{p}\Psi^{m}_{\ell,3}(p,r)\cdot  \alpha(q,r)\nabla_{p}\Psi^{m}_{\ell,3}(p,r)\right]\rho^{m}(p,r)dpdr
% \ . \label{Eq:mNonZeroIntegrationByParts_1}
%\end{align}
%
%Recall that we have set $\alpha(q,r)=2I$. It is clear however that
%this is done without loss of generality due to the
%positive-definiteness of $\alpha(q,r)$. Hence, after this
%simplification, we have established that
\begin{align*}
&\int_{\mathcal{Y}}\left(\cL^{m}_{q}\Psi^{m}_{\ell,3}(p,r)\right)
\Psi^{m}_{\ell,3}(p,r)
\rho^{m}(p,r)dpdr=-\frac{2}{m}\int_{\mathcal{Y}}|\nabla_{p}\Psi^{m}_{\ell,3}(p,r)|^{2}\rho^{m}(p,r)dpdr
\ . %\label{Eq:DirichletFormWith_m}
\end{align*}

But, we also know that $\Psi^{m}_{\ell,3}(p,r)$ satisfies
(\ref{Eq:ErrorEquation}). Therefore, multiplying both sides of
(\ref{Eq:ErrorEquation}) by $\Psi^{m}_{\ell,3}(p,r)$ and integrating
against the invariant density $\rho^{m}(p,r)$ gives us the identity
\begin{align*}
\frac{2}{m}\int_{\mathcal{Y}}|\nabla_{p}\Psi^{m}_{\ell,3}(p,r)|^{2}\rho^{m}(p,r)dpdr&=
\sqrt{m}\int_{\mathcal{Y}}\left(\mathcal{B}\Psi_{\ell,2}(p,r)\right)\Psi^{m}_{\ell,3}(p,r)\rho^{m}(p,r)dpdr
\ ,
\end{align*}
or, in other words
\begin{align}
\left\|\frac{1}{\sqrt{m}}\nabla_{p}\Psi^{m}_{\ell,3}\right\|^{2}_{L^{2}(\mathcal{Y};
\rho^{m})}& =\dfrac{\sqrt{m}}{2}\left<\mathcal{B}\Psi_{\ell,2},
\Psi^{m}_{\ell,3}\right>_{L^{2}(\mathcal{Y}; \rho^{m})} \ .
\label{Eq:GradientCorrector1}
\end{align}

We now have the estimate
\begin{align}
\|\Psi_{\ell,3}^m\|^2_{L^2(\cY; \rho^m)} &=\int_{\cY}(\Psi_{\ell,3}^m)^2 \rho^m(p,r)dpdr \nonumber
\\
&=\int_{\cY}(\Psi_{\ell,3}^m)^2
\dt^m(p,r)\rho^0(p,r)dpdr+\int_{\cY}(\Psi_{\ell,3}^m)^2
\rho^0(p,r)dpdr \nonumber
\\
&\leq \|\Psi_{\ell,3}^m\|^4_{L^4(\cY; \rho^0)}\|\dt^m\|^2_{L^2(\cY;
\rho^0)}+\|\Psi_{\ell,3}^m\|_{L^2(\cY; \rho^0)}^2 \ . \nonumber
\end{align}

Applying Lemma \ref{mZeroL4bound} and Lemma \ref{mZeroL2bound1} and the
fact that $\lim\li_{m\ra 0}\|\dt^m\|_{L^2(\cY; \rho^0)}=0$ we see
that
\begin{align*}
\lim\li_{m\ra 0}\|\Psi_{\ell,3}^m\|_{L^2(\cY;
\rho^m)}=0 \ .
\end{align*}

Thus we have by (\ref{Eq:GradientCorrector1})
\begin{align*}
\left\|\dfrac{1}{\sqrt{m}}\grad_p \Psi_{\ell,3}^m\right\|_{L^2(\cY;
\rho^m)}^2\leq \dfrac{\sqrt{m}}{2}\|\cB \Psi_{\ell,2}\|_{L^2(\cY,
\rho^m)}\|\Psi_{\ell,3}^m\|_{L^2(\cY; \rho^m)}\ra 0
\end{align*}
as $m\ra 0$. This is (\ref{Eq:RelationToShow1a2}), completing the proof of Theorem \ref{T:ConvergenceCellProblems}.

\appendix

\section{On properties of the solution to the hypoelliptic cell problem}\label{S:AppendixA}
In this section we recall some results on the solution to the hypoelliptic Poisson equation (\ref{Eq:HypoellipticCellProblem}) from \cite{HairerPavliotis2004}. Since the set-up of the current paper has some differences from the setup in \cite{HairerPavliotis2004}, we formulate the results that we need in the current setup, even though we emphasize that the derivation follows basically from \cite{HairerPavliotis2004}.

Under the assumptions made in this paper, Theorem $3.3$ from \cite{HairerPavliotis2004} guarantees that,
(\ref{Eq:HypoellipticCellProblem}) has a smooth solution that does not grow too fast at infinity.
In particular, we have that for every $\eta>0$, we can write
\[
\Phi(p,r)=e^{\frac{\eta}{2}|p|^{2}}\tilde{\Phi}(p,r)
\]
where $\tilde{\Phi}\in\mathcal{S}$, the Schwartz space of smooth
functions with fast decay. Furthermore, as it can be derived from
the proof of Theorem $3.3$ of  \cite{HairerPavliotis2004}, if we let
$\sigma_{\text{max}}=\max_{i,j=1,\cdots
d}\sup_{(q,r)}|\sigma_{i,j}(q,r)|$, then we have that for every
$\eta\in(0,2\sigma_{\text{max}}^{-2})$ the solution $\Phi$ is unique
(up to additive constants) in the space
$L^{2}\left(\mathcal{Y},e^{-\eta|p|^{2}}dpdr\right)$.

Moreover, it is clear that for each fixed $q$, the operator $\cL^{m}_q$ defines a hypoelliptic
diffusion process on $(p,r)\in\cY=\R^d\times \T^d$. Let us define this process by $(p_\cdot,r_\cdot)$.
We recall then the following useful bounds from \cite{HairerPavliotis2004}.
\begin{enumerate}
\item{There exists a constant $C$ such that
\[
\E\left[e^{\frac{\sigma_{\text{max}}^{-2}}{2}|p_{t}|^{2}}\right]<
\E\left[e^{\frac{\sigma_{\text{max}}^{-2}}{2}|p_{o}|^{2}+Ct}\right]
\]}
\item{For every $T>0$, there exist constants $\eta,C>0$ such that
\[
\E\left[\sup_{t\in[0,T]}e^{\eta|p_{t}|^{2}}\right]<C
\E\left[e^{\eta|p_{o}|^{2}}\right]
\]
}
\end{enumerate}

Based then on these bounds, the computations of  \cite{HairerPavliotis2004} reveal
that the following bounds for the solution to (\ref{Eq:HypoellipticCellProblem}) are true.
In particular we have that for every $T,p>0$ there exists a constant $C>0$ that is independent of $\varepsilon,\delta$ such that
$\E\left[\sup_{t\in[0,T]}\left|\Phi\left(\bar{p}^{\varepsilon}_{t},\frac{\bar{q}^{\varepsilon}_{t}}{\delta}\right)\right|^{p}\right]\leq C \delta^{-p/2},$
and
$\E\left[\sup_{t\in[0,T]}\left|\nabla_{p}\Phi\left(\bar{p}^{\varepsilon}_{t},\frac{\bar{q}^{\varepsilon}_{t}}{\delta}\right)\right|^{p}\right]\leq
C.$

These bounds are used in the proofs of this paper and more specifically in the derivation of Theorems \ref{T:LLN} and \ref{T:MainLaplacePrinciple}.

\section{Proofs of Lemmas in Section \ref{S:InvariantMeasures}.}\label{S:InvariantMeasuresProof}

\begin{proof}[Proof of Lemma \ref{L:L2_Bounds1}]
 The proof goes in a standard way using integration by parts. We present the main steps for completeness.
\begin{align}
&\int_{\mathcal{Y}}\left(\cL^{m}_{q}f(p,r)\right) g(p,r) \rho^{0}(p,r)dpdr=\int_{\mathcal{Y}}f(p,r) \left(\cL^{m}_{q}\right)^{*}\left(g(p,r) \rho^{0}(p,r)\right)dpdr\nonumber\\
&=\int_{\mathcal{Y}}f(p,r) \left[\frac{1}{m}\left(\mathcal{A}\right)^{*}\left(\rho^{0}(p,r)\right)g(p,r)+ \left(-\cL^{m}_{q}g(p,r)\right)\rho^{0}(p,r)\right]dpdr\nonumber\\
&\quad+\frac{2}{m}\int_{\mathcal{Y}}f(p,r) \left[\left(\rho^{0}(p,r) I:\nabla^{2}_{p}g(p,r)+\nabla_{p}\rho^{0}(p,r)\cdot I\nabla_{p}g(p,r)\right)\right]dpdr\nonumber\\
&\qquad- \frac{1}{\sqrt{m}}\int_{\mathcal{Y}}f(p,r)g(p,r)\left[b(r)\nabla_{p}\rho^{0}(p,r)+p\nabla_{r}\rho^{0}(p,r)\right]dpdr\nonumber\\
&=\int_{\mathcal{Y}}f(p,r) \left(-\cL^{m}_{q}g(p,r)\right)\rho^{0}(p,r)dpdr\nonumber\\
&\quad+\frac{2}{m}\int_{\mathcal{Y}}f(p,r)\left(\rho^{0}(p,r) I:\nabla^{2}_{p}g(p,r)+\nabla_{p}\rho^{0}(p,r)\cdot I \nabla_{p}g(p,r)\right)dpdr\nonumber\\
&\qquad+ \frac{1}{\sqrt{m}}\int_{\mathcal{Y}}f(p,r)g(p,r)p\cdot h(r)\rho^{0}(p,r)dpdr\nonumber\\
&=\int_{\mathcal{Y}}f(p,r) \left(-\cL^{m}_{q}g(p,r)\right)\rho^{m}(p,r)dpdr\nonumber\\
&\quad-\frac{2}{m}\int_{\mathcal{Y}}\left[\nabla_{p}f(p,r)\cdot I
\nabla_{p}g(p,r)\right]\rho^{0}(p,r)dpdr+
\frac{1}{\sqrt{m}}\int_{\mathcal{Y}}f(p,r)g(p,r)p\cdot
h(r)\rho^{0}(p,r)dpdr \ . \nonumber
\end{align}
To derive the last line, we used integration by parts as well as the
definition $h(r)= b(r)-\nabla_{r}\log\rho_{0}(r)$. The statement of
the lemma  follows.
\end{proof}

\begin{proof}[Proof of Lemma \ref{L:L2_Bounds3}]
We start with the following calculation
\begin{align*}
\nabla_{p}\left(e^{-\frac{1}{2}|p|^{2}}f(p,r)g(p,r)\right)&=-pe^{-\frac{1}{2}|p|^{2}}f(p,r)g(p,r)+e^{-\frac{1}{2}|p|^{2}}\nabla_{p}\left(f(p,r)g(p,r)\right)
 \ . \end{align*}

Therefore, we obtain
\begin{align*}
\int_{\mathbb{R}^{d}}pf(p,r)g(p,r) \rho^{\text{OU}}(p)dp&=\int_{\mathbb{R}^{d}}\nabla_{p}(f(p,r)g(p,r)) \rho^{\text{OU}}(p)dp\nonumber\\
&=\int_{\mathbb{R}^{d}}\left(\nabla_{p}f(p,r) g(p,r)+f(p,r)
\nabla_{p}g(p,r) \right)\rho^{\text{OU}}(p)dp  \ .
\end{align*}

Multiplying both sides by $h(r)\rho_{0}(r)$ and integrating over
$r\in \mathbb{T}^{d}$ we then obtain after using H\"{o}lder
inequality
\begin{align*}
\left<h(r)p,fg\right>_{L^{2}(\mathcal{Y}; \rho^{0})}&=\int_{\mathcal{Y}}\left(\nabla_{p}f(p,r) g(p,r)+f(p,r) \nabla_{p}g(p,r) \right) \rho^{0}(p,r)dpdr\nonumber\\
&\leq K \left[\left\|f\right\|_{L^{2}(\mathcal{Y};
\rho^{0})}\left\|\nabla_{p}g\right\|_{L^{2}(\mathcal{Y};
\rho^{0})}+\left\|\nabla_{p}f\right\|_{L^{2}(\mathcal{Y};
\rho^{0})}\left\|g\right\|_{L^{2}(\mathcal{Y}; \rho^{0})}\right]
 \ . \end{align*}
This completes the statement of the lemma.
\end{proof}

\begin{proof}[Proof of Lemma \ref{L:L2_Bounds4}]
We notice that
\begin{align*}
 \left<f,\mathcal{B}f\right>_{L^{2}(\mathcal{Y}; \rho^{0})}&=\int_{\mathcal{Y}}p\nabla_{r}f(p,r) f(p,r)\rho^{0}(p,r)dpdr\nonumber\\
 &\quad+
 \int_{\mathcal{Y}}b(r)\nabla_{p}f(p,r) f(p,r)\rho^{0}(p,r)dpdr\nonumber\\
 &=\text{Term1}_{m}+\text{Term2}_{m} \ .
\end{align*}

By integration by parts, we have
\begin{align*}
\text{Term1}_{m}&= \int_{\mathcal{Y}}p\nabla_{r}f(p,r) f(p,r)\rho^{0}(p,r)dpdr\nonumber\\
&=-\int_{\mathcal{Y}}pf(p,r)
\nabla_{r}f(p,r)\rho^{0}(p,r)dpdr-\int_{\mathcal{Y}} p
\nabla_{r}\log\rho_{0}(r) \left|f(p,r)\right|^{2}  \rho^{0}(p,r)dpdr
\ .
\end{align*}

Thus, we get
\begin{align*}
\text{Term1}_{m}&= \int_{\mathcal{Y}}p\nabla_{r}f(p,r) f(p,r)\rho^{0}(p,r)dpdr=-\frac{1}{2}\int_{\mathcal{Y}} p \nabla_{r}\log\rho_{0}(r)
\left|f(p,r)\right|^{2}  \rho^{0}(p,r)dpdr \ .
\end{align*}

Similarly, we have
\begin{align*}
\text{Term2}_{m}&= \int_{\mathcal{Y}}b(r)\nabla_{p}f(p,r) f(p,r)\rho^{0}(p,r)dpdr\nonumber\\
&=-\int_{\mathcal{Y}}b(r)f(p,r)
\nabla_{p}f(p,r)\rho^{0}(p,r)dpdr+\int_{\mathcal{Y}} b(r) p
\left|f(p,r)\right|^{2}  \rho^{0}(p,r)dpdr \ .
\end{align*}

Thus, we get
\begin{align*}
\text{Term2}_{m}&= \int_{\mathcal{Y}}b(r)\nabla_{p}f(p,r) f(p,r)\rho^{0}(p,r)dpdr=\frac{1}{2}\int_{\mathcal{Y}} b(r) p \left|f(p,r)\right|^{2}
\rho^{0}(p,r)dpdr \ .
\end{align*}

Putting the representations of $\text{Term1}_{m}$ and
$\text{Term2}_{m}$ together, we have in fact obtained
\begin{align*}
\left<f,\mathcal{B}f\right>_{L^{2}(\mathcal{Y};
\rho^{0})}&=\frac{1}{2} \int_{\mathcal{Y}} p h (r)
\left|f(p,r)\right|^{2}  \rho^{0}(p,r)dpdr \ .
\end{align*}

Hence, by Lemma \ref{L:L2_Bounds3} we have that there exists a
constant $K<\infty$ that depends on
$\sup_{r\in\mathbb{T}^{d}}\left|h(r)\right|$ such that
\begin{align*}
\left<f,\mathcal{B} f\right>_{L^{2}(\mathcal{Y}; \rho^{0})}=\frac{1}{2}\left<p h(r), |f|^{2}\right>_{L^{2}(\mathcal{Y}; \rho^{0})}&\geq -K\left\|f\right\|_{L^{2}(\mathcal{Y}; \rho^{0})}\left\|\nabla_{p}f\right\|_{L^{2}(\mathcal{Y}; \rho^{0})}\nonumber\\
&\geq -K \left[\eta \left\|f\right\|^{2}_{L^{2}(\mathcal{Y};
\rho^{0})}
+\frac{1}{4\eta}\left\|\nabla_{p}f\right\|^{2}_{L^{2}(\mathcal{Y};
\rho^{0})}\right] \ .
\end{align*}
where we use the generalized Cauchy-Schwarz inequality $ab\leq \eta
|a|^{2}+\frac{1}{4\eta}|b|^{2}$ for any $\eta\in(0,\infty)$. This
concludes the proof of the lemma.
\end{proof}

\begin{proof}[Proof of Lemma \ref{L:L2_Bounds2}]
Recall that by (\ref{Eq:EquationInvariantDensity}), the equation for
$\delta^{m}(p,r)=\tilde{\rho}^{m}(p,r)-1$ is
\begin{align}
\cL^{m}_{q}\delta^{m}(p,r)=\frac{2}{\sqrt{m}}\mathcal{B}\delta^{m}(p,r)-\frac{1}{\sqrt{m}}
p h(r) \left[\delta^{m}(p,r)+1\right] \ . \label{Eq:deltam equation}
\end{align}

Let us multiply now the last equation by $\delta^{m}(p,r)$ and
integrate over $\mathcal{Y}$ against $\rho^{0}(p,r)$. Doing so, we
get
\begin{align*}
 \left<\cL^{m}_{q}\delta^{m},\delta^{m}\right>_{L^{2}(\mathcal{Y}; \rho^{0})}&=\frac{2}{\sqrt{m}}\left<\mathcal{B}\delta^{m},\delta^{m}\right>_{L^{2}(\mathcal{Y}; \rho^{0})} -\frac{1}{\sqrt{m}} \left<p h(r) \left[\delta^{m}(p,r)+1\right],\delta^{m}\right>_{L^{2}(\mathcal{Y}; \rho^{0})}\label{Eq:RepresentationInvMeas1}
 \ . \end{align*}

The next step is to rewrite the term
$\left<\mathcal{B}\delta^{m},\delta^{m}\right>_{L^{2}(\mathcal{Y};
\rho^{0}(p,r))}$. By Lemma \ref{L:L2_Bounds4}
 we have
\begin{align*}
\left<\mathcal{B}f,f\right>_{L^{2}(\mathcal{Y};
\rho^{0})}&=\frac{1}{2} \int_{\mathcal{Y}} p h (r)
\left|f(p,r)\right|^{2}  \rho^{0}(p,r)dpdr \ .
\end{align*}

Inserting the latter expression into
(\ref{Eq:RepresentationInvMeas1}) we obtain
\begin{align*}
 \left<\cL^{m}_{q}f,f\right>_{L^{2}(\mathcal{Y}; \rho^{0})}&=\frac{2}{\sqrt{m}}\left<\mathcal{B}f,f\right>_{L^{2}(\mathcal{Y}; \rho^{0})}\nonumber\\
 &\quad -\frac{1}{\sqrt{m}} \left<p h(r) \left[f(p,r)+1\right],f\right>_{L^{2}(\mathcal{Y}; \rho^{0})}\nonumber\\
 &=\frac{1}{\sqrt{m}}\left<p h(r),|f|^{2}\right>_{L^{2}(\mathcal{Y}; \rho^{0})}\nonumber\\
 &\quad -\frac{1}{\sqrt{m}} \left<p h(r) \left[f(p,r)+1\right],f\right>_{L^{2}(\mathcal{Y}; \rho^{0})}\nonumber\\
  &= -\frac{1}{\sqrt{m}} \left<p h(r),f\right>_{L^{2}(\mathcal{Y};
  \rho^{0})} \ .
\end{align*}

Next step is to apply Lemma \ref{L:L2_Bounds1} with
$f(p,r)=g(p,r)=\delta^{m}(p,r)$ to get
 \begin{align*}
  &\left<\cL^{m}_{q}\delta^{m},\delta^{m}\right>_{L^{2}(\mathcal{Y}; \rho^{0})}=-\frac{1}{m}\left\|\nabla_{p}\delta^{m}\right\|^{2}_{L^{2}(\mathcal{Y}; \rho^{0})}+
  \frac{1}{2\sqrt{m}}\left<h(r)p,\left|\delta^{m}\right|^{2}\right>_{L^{2}(\mathcal{Y}; \rho^{0})}
    \ . \end{align*}

Combining the last two expressions, we obtain
\begin{align}
-\frac{1}{m}\left\|\nabla_{p}\delta^{m}\right\|^{2}_{L^{2}(\mathcal{Y};
\rho^{0})}&+
  \frac{1}{2\sqrt{m}}\left<h(r)p,\left|\delta^{m}\right|^{2}\right>_{L^{2}(\mathcal{Y}; \rho^{0})}= -\frac{1}{\sqrt{m}} \left<p h(r),\delta^{m}\right>_{L^{2}(\mathcal{Y};
  \rho^{0})}   \ ,
\end{align}
and after rearranging, we obtain
\begin{align*}
\left\|\nabla_{p}\delta^{m}\right\|^{2}_{L^{2}(\mathcal{Y};
\rho^{0})}&=
  \frac{\sqrt{m}}{2}\left<h(r)p,\left|\delta^{m}\right|^{2}\right>_{L^{2}(\mathcal{Y}; \rho^{0})}  +\sqrt{m} \left<p h(r),\delta^{m}\right>_{L^{2}(\mathcal{Y};
  \rho^{0})} \ .
\end{align*}

This concludes the proof of the lemma.
\end{proof}

\begin{proof}[Proof of Lemma \ref{L:L2_Bounds2a}]
The proof goes along the same lines of Lemma \ref{L:L2_Bounds2}. We
take $\pt_{p_i}$ on both sides of the equation (\ref{Eq:deltam
equation}) and we get the following equation
\begin{equation*}
\cL_q^m\pt_{p_i}\dt^m= \dfrac{2}{\sqrt{m}}\cB\pt_{p_i}\dt^m-
\dfrac{1}{\sqrt{m}}h_i(r)[\dt^m+1]- \dfrac{1}{\sqrt{m}}p\cdot
h(r)\pt_{p_i}\dt^m \ .
\end{equation*}

Multiplying both sides of the above equation by $\pt_{p_i}\dt^m$ and
integrate with respect to $L^2(\cY; \rho^0)$--inner product we get
\begin{equation*}\begin{array}{ll} \langle\cL_q^m \pt_{p_i}\dt^m,
\pt_{p_i}\dt^m\rangle_{L^2(\cY;
\rho^0)}=&\dfrac{2}{\sqrt{m}}\langle\cB \pt_{p_i}\dt^m,
\pt_{p_i}\dt^m\rangle_{L^2(\cY; \rho_0)}\\&-\dfrac{1}{\sqrt{m}}\langle
h_i(r)(\dt^m+1), \pt_{p_i}\dt^m\rangle_{L^2(\cY; \rho_0)}\\ &
-\dfrac{1}{\sqrt{m}}\langle p\cdot h(r)\pt_{p_i}\dt^m,
\pt_{p_i}\dt^m\rangle_{L^2(\cY; \rho_0)}
 \ .
\end{array} \end{equation*}

We apply Lemma \ref{L:L2_Bounds1} with
$f(p,r)=g(p,r)=\pt_{p_i}\dt^m(p,r)$ to get
\begin{equation*}
\begin{array}{ll} \langle\cL_q^m \pt_{p_i}\dt^m,
\pt_{p_i}\dt^m\rangle_{L^2(\cY; \rho^0)}=&
-\dfrac{1}{m}\|\grad_p\pt_{p_i}\dt^m\|_{L^2(\cY;
\rho^0)}^2+\dfrac{1}{2\sqrt{m}}\langle h(r)p,
|\pt_{p_i}\dt^m|^2\rangle_{L^2(\cY; \rho^0)} \ .
\end{array}
\end{equation*}

We now apply Lemma \ref{L:L2_Bounds4} and we have
\begin{equation*}\begin{array}{ll}
\langle\cB \pt_{p_i}\dt^m, \pt_{p_i}\dt^m\rangle_{L^2(\cY; \rho^0)}
=&\play{\dfrac{1}{2}\int_{\cY}p\cdot
h(r)|\pt_{p_i}\dt^m|^2\rho^0(p,r)dpdr} \ .
\end{array} \end{equation*}

Furthermore, we can calculate
\begin{equation*}\begin{array}{ll}
\langle h_i(r)(\dt^m+1), \pt_{p_i}\dt^m\rangle_{L^2(\cY; \rho^0)}
=&\play{\int_{\cY}h_i(r)(\dt^m+1)\pt_{p_i}\dt^m \rho^0(p,r)dpdr} \ ,
\end{array}
\end{equation*}
\begin{equation*}\begin{array}{ll}
\langle p\cdot h(r)\pt_{p_i}\dt^m, \pt_{p_i}\dt^m\rangle_{L^2(\cY;
\rho^0)} =& \play{\int_{\cY} p\cdot h(r) |\pt_{p_i}\dt^m|^2
\rho^0(p,r) dpdr} \ .
\end{array}
\end{equation*}

Thus, we get the identity
\begin{equation*}\begin{array}{ll}
-\dfrac{1}{m}\|\grad_p\pt_{p_i}\dt^m\|^2_{L^2(\cY;\rho^0)}=&
-\dfrac{1}{2\sqrt{m}}\langle ph(r),
|\pt_{p_i}\dt^m|^2\rangle_{L^2(\cY;
\rho^0)}\nonumber\\
&\quad-\dfrac{1}{\sqrt{m}}\langle h_i(r)(\dt^m+1),
\pt_{p_i}\dt^m\rangle_{L^2(\cY;\rho^0)} \ .
\end{array}
\end{equation*}

Making use of Lemma \ref{L:L2_Bounds3} and Young's inequality we estimate
\begin{equation*}\begin{array}{ll}
\|\grad_p\pt_{p_i}\dt^m\|^2_{L^2(\cY;\rho^0)}\leq &
K\sqrt{m}\|\grad_p \dt^m\|^2_{L^2(\cY;
\rho^0)}+\sqrt{m}\|\grad_p\pt_{p_i}\dt^m\|^2_{L^2(\cY; \rho^0)}\\ &
+ K\sqrt{m} (\|\dt^m\|^2_{L^2(\cY;
\rho^0)}+\|\grad_p\dt^m\|^2_{L^2(\cY; \rho^0)}+1) \ ,
\end{array}
\end{equation*} where $K>0$ is a constant that depends only on $\sup\li_{r\in
\T^d}|h(r)|$. This implies the lemma.
\end{proof}

\begin{proof}[Proof of Lemma \ref{L:L2_Bounds2b}]
The proof goes again along the same lines of Lemma
\ref{L:L2_Bounds2}. We take $\pt_{r_i}$ on both sides of the
equation (\ref{Eq:deltam equation}) and we get the following
equation
\begin{equation*}
\cL_q^m\pt_{r_i}\dt^m= \dfrac{2}{\sqrt{m}}\cB\pt_{r_i}\dt^m-
\dfrac{1}{\sqrt{m}}p\cdot \pt_{r_i}h(r)[\dt^m+1]-
\dfrac{1}{\sqrt{m}}p\cdot h(r)\pt_{r_i}\dt^m \ .
\end{equation*}

Multiplying both sides of the above equation by $\pt_{r_i}\dt^m$ and
integrate with respect to $L^2(\cY; \rho^0)$--inner product we get
\begin{equation*}\begin{array}{ll} \langle\cL_q^m \pt_{r_i}\dt^m,
\pt_{r_i}\dt^m\rangle_{L^2(\cY;
\rho^0)}=&\dfrac{2}{\sqrt{m}}\langle\cB \pt_{r_i}\dt^m,
\pt_{r_i}\dt^m\rangle_{L^2(\cY; \rho_0)}\\
&-\dfrac{1}{\sqrt{m}}\langle
p\cdot \pt_{r_i}h(r)[\dt^m+1], \pt_{r_i}\dt^m\rangle_{L^2(\cY; \rho_0)}\\
& -\dfrac{1}{\sqrt{m}}\langle p\cdot h(r)\pt_{r_i}\dt^m,
\pt_{r_i}\dt^m\rangle_{L^2(\cY; \rho_0)}
 \ .
\end{array} \end{equation*}

We apply Lemma \ref{L:L2_Bounds1} with
$f(p,r)=g(p,r)=\pt_{r_i}\dt^m(p,r)$ to get
\begin{equation*}\begin{array}{ll} \langle\cL_q^m \pt_{r_i}\dt^m,
\pt_{r_i}\dt^m\rangle_{L^2(\cY; \rho^0)}=&
-\dfrac{1}{m}\|\grad_p\pt_{r_i}\dt^m\|_{L^2(\cY;
\rho^0)}^2+\dfrac{1}{2\sqrt{m}}\langle h(r)p,
|\pt_{r_i}\dt^m|^2\rangle_{L^2(\cY; \rho^0)} \ .
\end{array} \end{equation*}

We now apply Lemma \ref{L:L2_Bounds4} and we also have
\begin{equation*}\begin{array}{ll}
\langle\cB \pt_{r_i}\dt^m, \pt_{r_i}\dt^m\rangle_{L^2(\cY; \rho^0)}
=&\play{\dfrac{1}{2}\int_{\cY}p\cdot
h(r)|\pt_{r_i}\dt^m|^2\rho^0(p,r)dpdr} \ .
\end{array} \end{equation*}

Furthermore, we can calculate
\begin{equation*}\begin{array}{ll}
\langle p\cdot\pt_{r_i}h(r)[\dt^m+1],
\pt_{r_i}\dt^m\rangle_{L^2(\cY; \rho^0)} =&\play{\int_{\cY}p\cdot
\pt_{r_i}h(r)(\dt^m+1)\pt_{r_i}\dt^m \rho^0(p,r)dpdr} \ ,
\end{array}
\end{equation*}

We can apply a straightforward generalization of Lemma \ref{L:L2_Bounds3} with
$h(r)$ replaced by $\pt_{r_i}h(r)$, as well as Young's inequality,
to estimate the right hand side of the above equation by
\begin{equation*}\begin{array}{ll}
\langle p\cdot\pt_{r_i}h(r)[\dt^m+1],
\pt_{r_i}\dt^m\rangle_{L^2(\cY; \rho^0)} \leq & K
\left(\|\dt^m\|_{L^2(\cY;\rho^0)}^2+\|\grad_p\dt^m\|_{L^2(\cY;
\rho^0)}^2 +\|\grad_r\dt^m\|_{L^2(\cY;
\rho^0)}^2+1\right)\\&+\dfrac{1}{2}\|\grad_p\pt_{r_i}\dt^m\|_{L^2(\cY;
\rho^0)}^2 \ ,
\end{array}
\end{equation*}
 where $K>0$ is a constant that depends only on $\sup\li_{r\in
\T^d}|\grad_rh(r)|$. We also have
\begin{equation*}\begin{array}{ll}
\langle p\cdot h(r)\pt_{r_i}\dt^m, \pt_{r_i}\dt^m\rangle_{L^2(\cY;
\rho^0)} =& \play{\int_{\cY} p\cdot h(r) |\pt_{r_i}\dt^m|^2
\rho^0(p,r) dpdr} \ .
\end{array}
\end{equation*}

Thus we get the identity
\begin{equation*}\begin{array}{ll}
-\dfrac{1}{m}\|\grad_p\pt_{r_i}\dt^m\|^2_{L^2(\cY;\rho^0)}=&
-\dfrac{1}{2\sqrt{m}}\langle ph(r),
|\pt_{r_i}\dt^m|^2\rangle_{L^2(\cY;
\rho^0)}\nonumber\\
&\quad-\dfrac{1}{\sqrt{m}}\langle p\cdot\pt_{r_i}h(r)[\dt^m+1],
\pt_{r_i}\dt^m\rangle_{L^2(\cY; \rho^0)} \ .
\end{array}
\end{equation*}

Making use of Lemma \ref{L:L2_Bounds3} and Young's inequality again we estimate
\begin{equation*}\begin{array}{ll}
\|\grad_p\pt_{r_i}\dt^m\|^2_{L^2(\cY;\rho^0)}\leq &
K\sqrt{m}\|\grad_p \dt^m\|^2_{L^2(\cY; \rho^0)}\\ & + K\sqrt{m}
(\|\dt^m\|^2_{L^2(\cY; \rho^0)}+\|\grad_p\dt^m\|^2_{L^2(\cY;
\rho^0)}+\|\grad_r\dt^m\|^2_{L^2(\cY;
\rho^0)}+1)\\&+\sqrt{m}\|\grad_p\pt_{r_i} \dt^m\|_{L^2(\cY;
\rho^0)}^2 \ ,
\end{array}
\end{equation*}
 where $K>0$ is a constant that depends only on $\sup\li_{r\in
\T^d}\max(|h(r)|, |\grad_r h(r)|)$. This implies the lemma.
\end{proof}

\begin{proof}[Proof of Lemma \ref{L:AuxiliaryBounds}]
We start with $T_{1}(\delta^{m})$. By Lemma \ref{L:L2_Bounds4} with
$f=\delta^{m}$ we have
\begin{align*}
 T_{1}(\delta^{m})&=\sqrt{m}\left[\left<\mathcal{B}\delta^{m},\delta^{m}\right>-\left<p h(r), \left(\delta^{m}+1\right)\delta^{m}\right>\right]\nonumber\\
 &=\sqrt{m}\left[\frac{1}{2}\left<ph(r),|\delta^{m}|^{2}\right>-\left<p h(r), \left(\delta^{m}+1\right)\delta^{m}\right>\right]\nonumber\\
 &=\sqrt{m}\left[-\frac{1}{2}\left<ph(r),|\delta^{m}|^{2}\right>+\left<p h(r), \delta^{m}\right>\right]
 \ . \end{align*}

Thus, by Lemma \ref{L:L2_Bounds3} with $f=g=\delta^{m}$ we have the
following bound
\begin{align}
\left| T_{1}(\delta^{m})\right|&\leq\sqrt{m} K\left[\left\|\delta^{m}\right\|\left\|\nabla_{p}\delta^{m}\right\|+\left\|\delta^{m}\right\|^{2}\right]\nonumber\\
&\leq\sqrt{m} \frac{K+2}{2}\left\|\delta^{m}\right\|^{2}
+\sqrt{m}\frac{K}{2}\left\|\nabla_{p}\delta^{m}\right\|^{2} \ .
\label{Eq:TermT1}
\end{align}

Next we derive an upper bound for $T_{2}(\delta^{m})=\left<A
\delta^{m}, A\cL^{1}\delta^{m}\right>$.  For this purpose we first
notice that
$$\begin{array}{ll}
\left<A\delta^{m},A\mathcal{B}\delta^{m}\right>&=\left<\nabla_{p}\delta^{m},\nabla_{p}\mathcal{B}\delta^{m}\right>\nonumber\\
&=\left<\nabla_{p}\delta^{m},\nabla_{p}\left(p\nabla_{r}\delta^{m}+b(r)\nabla_{p}\delta^{m}\right)\right>\nonumber\\
&=\left<\nabla_{p}\delta^{m},\mathcal{B}\nabla_{p}\delta^{m}\right>+\left<\nabla_{p}\delta^{m},\nabla_{r}\delta^{m}\right>\nonumber\\
&=\frac{1}{2}\left<ph(r),|\nabla_{p}\delta^{m}|^{2}\right>+\left<\nabla_{p}\delta^{m},\nabla_{r}\delta^{m}\right>
 \ , \end{array}$$
where in the last inequality we used Lemma \ref{L:L2_Bounds4}. Then,
using the equation for $\delta^{m}$,
(\ref{Eq:EquationInvariantDensity3}) and Lemma \ref{L:L2_Bounds3} we
have
\begin{align*}
&\left|T_{2}(\delta^{m})\right|=\left|\left<A \delta^{m}, A\cL^{1}\delta^{m}\right>\right|\nonumber\\
&=\left|\frac{(1+\sqrt{m})}{2}\left<ph(r),|\nabla_{p}\delta^{m}|^{2}\right>+(1+\sqrt{m})\left<\nabla_{p}\delta^{m},\nabla_{r}\delta^{m}\right>-\sqrt{m}\left<\nabla_{p}\delta^{m},h(r)(\delta^{m}+1)\right>\right.\nonumber\\
&\quad\left.-\sqrt{m}\left<ph(r),|\nabla_{p}\delta^{m}|^{2}\right>\right|\nonumber\\
&=\left|\frac{(1-\sqrt{m})}{2}\left<ph(r),|\nabla_{p}\delta^{m}|^{2}\right>+(1+\sqrt{m})\left<\nabla_{p}\delta^{m},\nabla_{r}\delta^{m}\right>-\sqrt{m}\left<\nabla_{p}\delta^{m},h(r)(\delta^{m}+1)\right>\right|\nonumber\\
&\leq \frac{(1-\sqrt{m})}{2} K \|\nabla_{p}\delta^{m}\|\|\nabla_{p}\nabla_{p}\delta^{m}\|+\frac{1+\sqrt{m}}{4\eta}\|\nabla_{p}\delta^{m}\|^{2}+\eta(1+\sqrt{m})\|\nabla_{r}\delta^{m}\|^{2}\nonumber\\
&\quad +\sqrt{m}K\left(\|\delta^{m}\|^{2}+\|\nabla_{p}\delta^{m}\|^{2}\right)\nonumber\\
&\leq \frac{(1-\sqrt{m})}{2} K \left(\eta \|\nabla_{p}\delta^{m}\|^{2}+\frac{1}{4\eta}\|\nabla_{p}\nabla_{p}\delta^{m}\|\right)+\frac{1+\sqrt{m}}{4\eta}\|\nabla_{p}\delta^{m}\|^{2}\nonumber\\
&\quad
+\eta(1+\sqrt{m})\|\nabla_{r}\delta^{m}\|^{2}+\sqrt{m}K\left(\|\delta^{m}\|^{2}+\|\nabla_{p}\delta^{m}\|^{2}\right)
 \ . \end{align*}

Next step now is to use Lemma \ref{L:L2_Bounds2a}. Doing so we get
the bound
\begin{align*}
&\left|T_{2}(\delta^{m})\right|=\left|\left<A \delta^{m}, A\cL^{1}\delta^{m}\right>\right|\nonumber\\
&\leq \frac{(1-\sqrt{m})}{2} K \left(\eta \|\nabla_{p}\delta^{m}\|^{2}+\frac{\sqrt{m}}{4\eta}K\left[1+ \left\|\delta^{m}\right\|^{2}+ \left\| \nabla_{p}\delta^{m}\right\|^{2}\right]\right)+\frac{1+\sqrt{m}}{4\eta}\|\nabla_{p}\delta^{m}\|^{2}\nonumber\\
&\quad +\eta(1+\sqrt{m})\|\nabla_{r}\delta^{m}\|^{2}+\sqrt{m}K\left(\|\delta^{m}\|^{2}+\|\nabla_{p}\delta^{m}\|^{2}\right)\nonumber\\
&\leq \left(\frac{K(1-\sqrt{m})}{2}\eta+ K\frac{(1-\sqrt{m})}{8\eta}\sqrt{m}+K\sqrt{m}+\frac{1+K\sqrt{m}}{4\eta}\right)\|\nabla_{p}\delta^{m}\|^{2}
+\nonumber\\
&\qquad+\eta(1+\sqrt{m})\|\nabla_{r}\delta^{m}\|^{2}+K\left(\frac{(1-\sqrt{m})}{4\eta}+1\right)\sqrt{m}\left(1+\left\|\delta^{m}\right\|^{2}\right)  \ .
\end{align*}

Use now Lemma \ref{L:L2_Bounds2} and then Lemma \ref{L:L2_Bounds3} to bound the term $\frac{1+K\sqrt{m}}{\eta}\|\nabla_{p}\delta^{m}\|^{2}$ by terms of the form
$K\frac{1+\sqrt{m}}{\eta}\sqrt{m}\left(\|\delta^{m}\|^{2}+\|\nabla_{p}\delta^{m}\|^{2}\right)$. Choosing then $\eta=\eta(m)$ such that $\eta(m)\rightarrow 0$ and
$\frac{\sqrt{m}}{\eta(m)}\rightarrow 0$, we get that for $m$
sufficiently small
\begin{align}
\left|T_{2}(\delta^{m})\right|&=\left|\left<A \delta^{m}, A\cL^{1}\delta^{m}\right>\right|\nonumber\\
%&\leq \left(K\hat{\eta}(m)+\frac{1+\sqrt{m}}{2}\right)\|\nabla_{p}\delta^{m}\|^{2}+\frac{1+\sqrt{m}}{2}\|\nabla_{r}\delta^{m}\|^{2}+\nonumber\\
%&\qquad+K\hat{\eta}(m)\left(1+ \left\|\delta^{m}\right\|^{2}\right)\nonumber\\
&\leq \left(\eta(m)+\sqrt{m}+\frac{\sqrt{m}}{\eta(m)} \right)K \|\delta^{m}\|^{2}_{H^{1}}+
\left(\frac{\sqrt{m}}{\eta(m)}+\sqrt{m}\right)K\left(1+ \left\|\delta^{m}\right\|^{2}\right)
  \ .
\label{Eq:TermT2}
\end{align}

Next we derive an upper bound for
$T_{4}(\delta^{m})=\left<\mathcal{C} \delta^{m},
\mathcal{C}\cL^{1}\delta^{m}\right>$.  For this purpose we first
notice that
\begin{align*}
\left<\mathcal{C}\delta^{m},\mathcal{C}\mathcal{B}\delta^{m}\right>&=\left<\nabla_{r}\delta^{m},\nabla_{r}\mathcal{B}\delta^{m}\right>\nonumber\\
&=\left<\nabla_{r}\delta^{m},\nabla_{r}\left(p\nabla_{r}\delta^{m}+b(r)\nabla_{p}\delta^{m}\right)\right>\nonumber\\
&=\left<\nabla_{r}\delta^{m},\mathcal{B}\nabla_{r}\delta^{m}\right>+\left<\nabla_{r}\delta^{m},\nabla_{r}b(r)\nabla_{p}\delta^{m}\right>\nonumber\\
&=\frac{1}{2}\left<ph(r),|\nabla_{r}\delta^{m}|^{2}\right>+\left<\nabla_{r}\delta^{m},\nabla_{r}b(r)\nabla_{p}\delta^{m}\right>
 \ , \end{align*}
where in the last inequality we used Lemma \ref{L:L2_Bounds4}. Then,
using the equation for $\delta^{m}$
\begin{align*}
T_{4}(\delta^{m})&=\left<\mathcal{C} \delta^{m}, \mathcal{C}\cL^{1}\delta^{m}\right>\nonumber\\
&=\frac{(1+\sqrt{m})}{2}\left<ph(r),|\nabla_{r}\delta^{m}|^{2}\right>+(1+\sqrt{m})\left<\nabla_{r}\delta^{m},\nabla_{r}b(r)\nabla_{p}\delta^{m}\right>\nonumber\\
&\quad-\sqrt{m}\left<\nabla_{r}\delta^{m},p \nabla_{r}h(r)(\delta^{m}+1)\right>-\sqrt{m}\left<ph(r),|\nabla_{r}\delta^{m}|^{2}\right>\nonumber\\
&=\frac{(1-\sqrt{m})}{2}\left<ph(r),|\nabla_{r}\delta^{m}|^{2}\right>+(1+\sqrt{m})\left<\nabla_{r}\delta^{m},\nabla_{r}b(r)
\nabla_{p}\delta^{m}\right>\nonumber\\
&\quad-\sqrt{m}\left<\nabla_{r}\delta^{m},p\nabla_{r}h(r)(\delta^{m}+1)\right>
\ .
\end{align*}

Using Lemma \ref{L:L2_Bounds3} we subsequently obtain
\begin{align*}
|T_{4}(\delta^{m})|
&\leq \frac{(1-\sqrt{m})}{2} K \|\nabla_{r}\delta^{m}\|\|\nabla_{p}\nabla_{r}\delta^{m}\|+(1+\sqrt{m})\left|\left<\nabla_{r}\delta^{m},\nabla_{r}b(r) \nabla_{p}\delta^{m}\right>\right|\nonumber\\
&\quad+\sqrt{m}\left|\left<\nabla_{r}\delta^{m},p\nabla_{r}h(r)(\delta^{m}+1)\right>\right|\nonumber\\
&\leq \frac{(1-\sqrt{m})}{2} K \left(\eta \|\nabla_{r}\delta^{m}\|^{2}+\frac{1}{4\eta}\|\nabla_{p}\nabla_{r}\delta^{m}\|^{2}\right)\nonumber\\
&+(1+\sqrt{m})\left[\eta \left\|\nabla_{r}\delta^{m}\right\|^{2}+\frac{1}{4\eta}K\left\| \nabla_{p}\delta^{m}\right\|^{2}\right]+\sqrt{m}\left|\left<\nabla_{r}\delta^{m},p\nabla_{r}h(r)(\delta^{m}+1)\right>\right|\nonumber\\
&\leq \frac{(1-\sqrt{m})}{2} K \left(\eta \|\nabla_{r}\delta^{m}\|^{2}+\frac{1}{4\eta}\|\nabla_{p}\nabla_{r}\delta^{m}\|^{2}\right)\nonumber\\
&+(1+\sqrt{m})\left[\eta \left\|\nabla_{r}\delta^{m}\right\|^{2}+\frac{1}{4\eta}K\left\| \nabla_{p}\delta^{m}\right\|^{2}\right]\nonumber\\
&+
\sqrt{m}K\left[\left\|\nabla_{r}\delta^{m}\right\|\left\|\nabla_{p}\delta^{m}\right\|+\left\|\nabla_{p}\nabla_{r}\delta^{m}\right\|\left\|\delta^{m}\right\|+\left\|\nabla_{r}\delta^{m}\right\|^{2}\right]
\nonumber\\
&\leq \frac{(1-\sqrt{m})}{2} K \left(\eta \|\nabla_{r}\delta^{m}\|^{2}+\frac{1}{4\eta}\|\nabla_{p}\nabla_{r}\delta^{m}\|^{2}\right)\nonumber\\
&+(1+\sqrt{m})\left[\eta \left\|\nabla_{r}\delta^{m}\right\|^{2}+\frac{1}{4\eta}K\left\| \nabla_{p}\delta^{m}\right\|^{2}\right]\nonumber\\
&+
\sqrt{m}K\left[\frac{1}{2}\left\|\nabla_{r}\delta^{m}\right\|^{2}+\frac{1}{2}\left\|\nabla_{p}\delta^{m}\right\|^{2}+\frac{1}{2}\left\|\nabla_{p}\nabla_{r}\delta^{m}\right\|^{2}+\frac{1}{2}\left\|\delta^{m}\right\|^{2}+\left\|\nabla_{r}\delta^{m}\right\|^{2}\right]
\nonumber\\
&\leq \frac{(1-\sqrt{m})}{2} K \left(\eta \|\nabla_{r}\delta^{m}\|^{2}+\frac{1}{4\eta}\|\nabla_{p}\nabla_{r}\delta^{m}\|^{2}\right)\nonumber\\
&+(1+\sqrt{m})\left[\eta \left\|\nabla_{r}\delta^{m}\right\|^{2}+\frac{1}{4\eta}K\left\| \nabla_{p}\delta^{m}\right\|^{2}\right]\nonumber\\
&+
\sqrt{m}K\left[\left\|\nabla_{r}\delta^{m}\right\|^{2}+\left\|\nabla_{p}\delta^{m}\right\|^{2}+\left\|\nabla_{p}\nabla_{r}\delta^{m}\right\|^{2}+\left\|\delta^{m}\right\|^{2}\right]
 \ . \end{align*}

The constant $K$ may change from line to line, but it is always
independent of $m$. Using Lemma  \ref{L:L2_Bounds2b} and then Lemma
\ref{L:L2_Bounds2} we subsequently obtain
\begin{align*}
|T_{4}(\delta^{m})|
&\leq \frac{(1-\sqrt{m})}{2} K \left(\eta \|\nabla_{r}\delta^{m}\|^{2}+\frac{\sqrt{m}}{4\eta}K\left[\left\|\delta^{m}\right\|^{2}+ \left\| \nabla_{p}\delta^{m}\right\|^{2}+ \left\| \nabla_{r}\delta^{m}\right\|^{2}\right]\right)\nonumber\\
&+(1+\sqrt{m})\left[\eta \left\|\nabla_{r}\delta^{m}\right\|^{2}+\frac{1}{4\eta}K\left\| \nabla_{p}\delta^{m}\right\|^{2}\right]\nonumber\\
&+
\sqrt{m}K\left[\left\|\nabla_{r}\delta^{m}\right\|^{2}+\left\|\nabla_{p}\delta^{m}\right\|^{2}+\sqrt{m}\left[\left\|\delta^{m}\right\|^{2}+
\left\| \nabla_{p}\delta^{m}\right\|^{2}+ \left\|
\nabla_{r}\delta^{m}\right\|^{2}\right]+\left\|\delta^{m}\right\|^{2}\right]
 \ .  \end{align*}

Finally, choosing $\eta=\eta(m)$ such that $\eta(m)\rightarrow 0$
and $\frac{\sqrt{m}}{\eta(m)}\rightarrow 0$, we get that for $m$
sufficiently small and for some constant $K<\infty$
\begin{align}
\left|T_{4}(\delta^{m})\right|&=\left|\left<\mathcal{C} \delta^{m}, \mathcal{C}\cL^{1}\delta^{m}\right>\right|\nonumber\\
&\leq K\left[\left(\eta(m)+\frac{\sqrt{m}}{\eta(m)}\right)\|\delta^{m}\|^{2}_{H^{1}}+\frac{\sqrt{m}}{\eta(m)}\left\|\delta^{m}\right\|^{2}\right]\ . \label{Eq:TermT4}
%&\leq \hat{\eta}(m) K\left[\|\nabla_{p}\delta^{m}\|^{2}+\|\nabla_{r}\delta^{m}\|^{2} +\left\|\delta^{m}\right\|^{2}\right]\nonumber\\
%&= \hat{\eta}(m) K\left[\|\delta^{m}\|^{2}_{H^{1}}
%+\left\|\delta^{m}\right\|^{2}\right]  \ . \label{Eq:TermT4}
\end{align}

It remains to consider the cross-term
\begin{align*}
T_{3}(\delta^{m})&=\left<A\cL^{1}\delta^{m},\mathcal{C}\delta^{m}\right>+\left<A\delta^{m},\mathcal{C}\cL^{1}\delta^{m}\right>
 \ . \end{align*}

Recalling (\ref{Eq:EquationInvariantDensity2}) we have the following
calculations
\begin{align*}
T_{3}(\delta^{m})&=\left<A\cL^{1}\delta^{m},\mathcal{C}\delta^{m}\right>+\left<A\delta^{m},\mathcal{C}\cL^{1}\delta^{m}\right>\nonumber\\
&=\left<\nabla_{r}\delta^{m},(1+\sqrt{m})\mathcal{B}\nabla_{p}\delta^{m}+(1+\sqrt{m})\nabla_{r}\delta^{m}-\sqrt{m}h(r)(\delta^{m}+1)-\sqrt{m}h(r)p\nabla_{p}\delta^{m}\right>\nonumber\\
&+\left<\nabla_{p}\delta^{m},(1+\sqrt{m})\mathcal{B}\nabla_{r}\delta^{m}+(1+\sqrt{m})\nabla_{r}b(r)\nabla_{p}\delta^{m}-\sqrt{m}\nabla_{r}h(r) p (\delta^{m}+1)\right.\nonumber\\
&\qquad\left.-\sqrt{m}h(r)p\nabla_{r}\delta^{m}\right>\nonumber\\
&=(1+\sqrt{m})\left[\left<\nabla_{p}\delta^{m}, \mathcal{B}\nabla_{r}\delta^{m}\right>+\left<\nabla_{r}\delta^{m}, \mathcal{B}\nabla_{p}\delta^{m}\right>\right]\nonumber\\
&\quad +(1+\sqrt{m})\left[\left<\nabla_{p}\delta^{m},\nabla_{r}b \nabla_{p}\delta^{m}\right>+\left\|\nabla_{r}\delta^{m}\right\|^{2}\right]\nonumber\\
&\quad -\sqrt{m}\left[\left<\nabla_{r}\delta^{m}, p h(r)\nabla_{p}\delta^{m}\right>+\left<\nabla_{p}\delta^{m}, p h(r)\nabla_{r}\delta^{m}\right>\right]\nonumber\\
&\quad -\sqrt{m}\left[\left<\nabla_{p}\delta^{m}, \nabla_{r}h(r) p (\delta^{m}+1)\right>+\left<\nabla_{r}\delta^{m}, h(r)(\delta^{m}+1)\right>\right]\nonumber\\
&=(1+\sqrt{m})\left[\left<\nabla_{p}\delta^{m}, \cL^{1}\nabla_{r}\delta^{m}\right>+\left<\nabla_{r}\delta^{m}, \cL^{1}\nabla_{p}\delta^{m}\right>\right]\nonumber\\
&\quad-(1+\sqrt{m})\left[\left<\nabla_{p}\delta^{m}, \nabla_{p}\nabla_{r}\delta^{m}\right>+\left<\nabla_{r}\delta^{m}, \nabla_{p}\nabla_{p}\delta^{m}\right>\right]\nonumber\\
&\quad +(1+\sqrt{m})\left[\left<\nabla_{p}\delta^{m},\nabla_{r}b \nabla_{p}\delta^{m}\right>+\left\|\nabla_{r}\delta^{m}\right\|^{2}\right]\nonumber\\
&\quad -\sqrt{m}\left[\left<\nabla_{r}\delta^{m}, p h(r)\nabla_{p}\delta^{m}\right>+\left<\nabla_{p}\delta^{m}, p h(r)\nabla_{r}\delta^{m}\right>\right]\nonumber\\
&\quad -\sqrt{m}\left[\left<\nabla_{p}\delta^{m}, \nabla_{r}h(r) p
(\delta^{m}+1)\right>+\left<\nabla_{r}\delta^{m},
h(r)(\delta^{m}+1)\right>\right] \ .
\end{align*}

Using now Lemma \ref{L:L2_Bounds1} on the first term of the right
hand side of the last display we obtain
\begin{align*}
T_{3}(\delta^{m})
&=-2(1+\sqrt{m})\left<\nabla_{p}\nabla_{p}\delta^{m},
\nabla_{p}\nabla_{r}\delta^{m}\right>
%+\frac{1+\sqrt{m}}{2}\left<\nabla_{p}\delta^{m},ph(r)\nabla_{r}\delta^{m}\right>
\nonumber\\
&\quad-(1+\sqrt{m})\left[\left<\nabla_{p}\delta^{m}, \nabla_{p}\nabla_{r}\delta^{m}\right>+\left<\nabla_{r}\delta^{m}, \nabla_{p}\nabla_{p}\delta^{m}\right>\right]\nonumber\\
&\quad +(1+\sqrt{m})\left[\left<\nabla_{p}\delta^{m},\nabla_{r}b \nabla_{p}\delta^{m}\right>+\left\|\nabla_{r}\delta^{m}\right\|^{2}\right]\nonumber\\
&\quad -\sqrt{m}\left[\left<\nabla_{r}\delta^{m}, p h(r)\nabla_{p}\delta^{m}\right>+\left<\nabla_{p}\delta^{m}, p h(r)\nabla_{r}\delta^{m}\right>\right]\nonumber\\
&\quad -\sqrt{m}\left[\left<\nabla_{p}\delta^{m}, \nabla_{r}h(r) p
(\delta^{m}+1)\right>+\left<\nabla_{r}\delta^{m},
h(r)(\delta^{m}+1)\right>\right]  \ .
\end{align*}

Next we bound terms from above. Using Lemma \ref{L:L2_Bounds3}, we
have for $\eta>0$
\begin{align*}
|T_{3}(\delta^{m})|
&\leq (1+\sqrt{m})K\left[\left\|\nabla_{p}\nabla_{p}\delta^{m} \right\|^{2}+\left\|\nabla_{p}\nabla_{r}\delta^{m} \right\|^{2}\right]\nonumber\\
&\quad+\sqrt{m}\left[ \eta \left|\nabla_{p}\delta^{m}\right\|^{2}+\frac{1}{4\eta}\left\| \nabla_{p}\nabla_{r}\delta^{m}\right\|^{2}+\eta \left\|\nabla_{r}\delta^{m}\right\|^{2}+\frac{1}{4\eta} \left\|\nabla_{p}\nabla_{p}\delta^{m}\right\|^{2}\right]\nonumber\\
&\quad +\frac{(1+\sqrt{m})}{2}K\left[\left\|\nabla_{r}\delta^{m}\right\|\left\|\nabla_{p}\nabla_{p}\delta^{m}\right\|+\left\|\nabla_{p}\delta^{m}\right\|\left\|\nabla_{p}\nabla_{r}\delta^{m}\right\|\right]\nonumber\\
&\quad +(1+\sqrt{m})\left[K \left\|\nabla_{p}\delta^{m}\right\|^{2}+\left\|\nabla_{r}\delta^{m}\right\|^{2}\right]\nonumber\\
&\quad +\sqrt{m}K\left[1+\left\|\delta^{m}\right\|^{2}
+\left\|\nabla_{p}\delta^{m}\right\|^{2}+
\left\|\nabla_{r}\delta^{m}\right\|^{2}\right] \ .
\end{align*}

The constant $K$ may change from line to line. Using Lemmas
\ref{L:L2_Bounds2a} and \ref{L:L2_Bounds2b} we obtain
\begin{align*}
|T_{3}(\delta^{m})|
&\leq \sqrt{m}(1+\sqrt{m})K\left[1+\left\|\delta^{m}\right\|^{2} +\left\|\nabla_{p}\delta^{m}\right\|^{2}+ \left\|\nabla_{r}\delta^{m}\right\|^{2}\right]\nonumber\\
&\quad+\sqrt{m}\left[ \eta \left|\delta^{m}\right\|^{2}_{H^{1}}+\frac{\sqrt{m}}{4\eta}\left[1+ \left\|\delta^{m}\right\|^{2} +\left\|\delta^{m}\right\|^{2}_{H^{1}}\right]\right]\nonumber\\
&\quad +(1+\sqrt{m})K\left[\eta \left|\delta^{m}\right\|^{2}_{H^{1}}+\frac{\sqrt{m}}{4\eta}\left[1+ \left\|\delta^{m}\right\|^{2} +\left\|\delta^{m}\right\|^{2}_{H^{1}}\right]\right]\nonumber\\
&\quad +(1+\sqrt{m})\left[K\left\|\nabla_{p}\delta^{m}\right\|^{2}+\left\|\nabla_{r}\delta^{m}\right\|^{2}\right]\nonumber\\
&\quad +\sqrt{m}K\left[\left\|\delta^{m}\right\|^{2} +\left\|\nabla_{p}\delta^{m}\right\|^{2}+ \left\|\nabla_{r}\delta^{m}\right\|^{2}\right] \ . \nonumber\\
\end{align*}

Applying then Lemma \ref{L:L2_Bounds2} to estimate the term
$\left\|\nabla_{p}\delta^{m}\right\|^{2}$ on the fourth line of the
last display, we obtain
 the following bound
\begin{align*}
|T_{3}(\delta^{m})| &\leq
\left(\sqrt{m}+\frac{\eta}{\sqrt{m}}\right)K\left[1+\left\|\delta^{m}\right\|^{2}
+\left\|\delta^{m}\right\|^{2}_{H^{1}}\right]+ \eta K
\left\|\delta^{m}\right\|^{2}_{H^{1}}+(1+\sqrt{m})
\left\|\nabla_{r}\delta^{m}\right\|^{2} \ . \nonumber
\end{align*}

Finally, choosing $\eta=\eta(m)$ such that $\eta(m)\rightarrow 0$
and $\frac{\sqrt{m}}{\eta(m)}\rightarrow 0$, we get that for $m$
sufficiently small and for some constant $K<\infty$
\begin{align*}
|T_{3}(\delta^{m})| &\leq
(\sqrt{m}+\frac{\sqrt{m}}{\eta(m)})K\left[1+\left\|\delta^{m}\right\|^{2}
+\left\|\delta^{m}\right\|^{2}_{H^{1}}\right]+ \eta(m)K \left\|\delta^{m}\right\|^{2}_{H^{1}} +(1+\sqrt{m})
\left\|\nabla_{r}\delta^{m}\right\|^{2} \ . %\label{Eq:TermT3}
\end{align*}

This concludes the proof of the lemma.
\end{proof}

\section{Proofs of Lemmas in Section \ref{S:CellProblems}.}\label{S:CellProblemsProof}

\begin{proof}[Proof of Lemma \ref{mZeroPoincareInequality}]
This can be shown by using Theorem 4.2.5 in
\cite{BakeryGentilLedoux}. Let $(P_t)_{t\geq 0}$ be the Markov
semigroup corresponding to generator $\mathcal{L}^{1}$ on
$\mathcal{Y}$.

By Lemma \ref{L:L2_Bounds1} with $m=1$, we obtain for the first term
(recall that $\rho^{0}(p,r)dpdr$ is the invariant measure
corresponding to the operator $\cL^{1}$) that the Dirichlet form
associated with $(P_t)_{t\geq 0}$ can be calculated as follows
\begin{align*}
\mathcal{E}(f)&=\left<-\cL^{1}f, f\right>_{L^{2}(\mathcal{Y};
\rho^{0})}=\left\|\nabla_{p}f\right\|^{2}_{L^{2}(\mathcal{Y};
\rho^{0})} \ .
\end{align*}

Thus by Theorem 4.2.5 of \cite{BakeryGentilLedoux} the validity of
Poincar\'{e} inequality is equivalent to exponential convergence to
equilibrium of the semigroup $(P_t)_{t\geq 0}$:
$$\int_{\mathcal{Y}}\left(P_t f-\int_{\mathcal{Y}}(P_t
f)\rho^0(p,r)dpdr\right)^2\rho^0(p,r)dpdr\leq c(f)e^{-2t/\kappa} \
.$$ for some constant $\kappa>0$. The above inequality is true since
$\cL_q^1$ admits a spectral gap (see \cite{EckmannHairer}).
\end{proof}

\begin{proof}[Proof of Lemma \ref{mZeroL2bound1}]
We make use of our equation (\ref{Eq:ErrorEquation}),
(\ref{Eq:mZeroIntegrationByParts}) as well as Lemma
\ref{L:L2_Bounds3} and we get
\begin{align}
\left\|\dfrac{1}{\sqrt{m}}\grad_p
\Psi_{\ell,3}^m\right\|^2_{L^2(\cY;
\rho^0)}-K\|\Psi_{\ell,3}^m\|_{L^2(\cY;
\rho^0)}\left\|\dfrac{1}{\sqrt{m}}\grad_p
\Psi_{\ell,3}^m\right\|_{L^2(\cY;\rho^0)}\leq \sqrt{m}\|\cB
\Psi_{\ell,2}\|_{L^2(\cY; \rho^0)}\|\Psi_{\ell,3}^m\|_{L^2(\cY;
\rho^0)} \ , \label{Eq:BasicEstimate}
\end{align}
for some constant $K>0$ independent of $m$.

We apply Lemma \ref{mZeroPoincareInequality}, using the fact that
$\play{\int_{\cY}\Psi_{\ell,3}^m(p,r)\rho^m(p,r)dpdr=0}$, and we
have
\begin{align}
&\|\Psi_{\ell,3}^m\|_{L^2(\cY; \rho^0)}^2 & \nonumber
\\
&\leq
\left\|\Psi_{\ell,3}^m-\int_{\cY}\Psi_{\ell,3}^m(p,r)\rho^0(p,r)
dpdr\right\|_{L^2(\cY;
\rho^0)}^2+\left(\int_{\cY}\Psi_{\ell,3}^m(p,r)\rho^0(p,r)dpdr\right)^2
\nonumber
\\
&= \left\|\Psi_{\ell,3}^m-\int_{\cY}\Psi_{\ell,3}^m(p,r)\rho^0(p,r)
dpdr\right\|_{L^2(\cY;
\rho^0)}^2+\nonumber\\
&\qquad+\left(\int_{\cY}\Psi_{\ell,3}^m(p,r)\rho^0(p,r)dpdr
-\int_{\cY}\Psi_{\ell,3}^m(p,r)\rho^m(p,r)dpdr\right)^2 \nonumber
\\
&\leq
\kp\|\grad_p\Psi_{\ell,3}^m\|_{L^2(\cY;\rho^0)}^2+\left(\int_{\cY}\Psi_{\ell,3}^m(p,r)\dt^m(p,r)\rho^0(p,r)dpdr\right)^2
\nonumber
\\
&\leq \kp\|\grad_p\Psi_{\ell,3}^m\|_{L^2(\cY; \rho^0)}^2+
\|\Psi_{\ell,3}^m\|_{L^2(\cY; \rho^0)}^2\|\dt^m\|_{L^2(\cY;
\rho^0)}^2 \ . \nonumber
\end{align}

Since we have $\lim\li_{m\ra 0} \|\dt^m\|_{L^2(\cY;\rho^0)}=0$, we
can choose $m$ small enough so that
\begin{align}
\|\Psi_{\ell,3}^m\|_{L^2(\cY; \rho^0)}\leq 2\kp \|\grad_p
\Psi_{\ell,3}^m\|_{L^2(\cY; \rho^0)} \ .
\label{Eq:noncenteredPoincare}
\end{align}

Combining (\ref{Eq:noncenteredPoincare}) and
(\ref{Eq:BasicEstimate}) we see that we have
\begin{align}
(1-2\kp
K\sqrt{m})\left\|\dfrac{1}{\sqrt{m}}\grad_p\Psi_{\ell,3}^m\right\|^2_{L^2(\cY;
\rho^0)}\leq \sqrt{m}\|\cB \Psi_{\ell,2}\|_{L^2(\cY;
\rho^0)}\|\Psi_{\ell,3}^m\|_{L^2(\cY; \rho^0)} \ . \nonumber
\end{align}

Using (\ref{Eq:noncenteredPoincare}) again we see that
\begin{align}
(1-2\kp
K\sqrt{m})\left\|\dfrac{1}{\sqrt{m}}\grad_p\Psi_{\ell,3}^m\right\|^2_{L^2(\cY;
\rho^0)}\leq 2\kp m\|\cB \Psi_{\ell,2}\|_{L^2(\cY;
\rho^0)}\left\|\dfrac{1}{\sqrt{m}}\grad_p\Psi_{\ell,3}^m\right\|_{L^2(\cY;
\rho^0)} \ . \nonumber
\end{align}

This means that we have the bound
\begin{align}
\left\|\dfrac{1}{\sqrt{m}}\grad_p\Psi_{\ell,3}^m\right\|_{L^2(\cY;
\rho^0)}\leq \dfrac{2\kp m}{1-2\kp K\sqrt{m}}\|\cB
\Psi_{\ell,2}\|_{L^2(\cY; \rho^0)} \ . \nonumber
\end{align}

Now apply (\ref{Eq:noncenteredPoincare}) again we obtain the bound
\begin{align}
\|\Psi_{\ell,3}^m\|_{L^2(\cY; \rho^0)}\leq \dfrac{4\kp^2
m^{3/2}}{1-2\kp K\sqrt{m}}\|\cB \Psi_{\ell,2}\|_{L^2(\cY; \rho^0)} \
. \nonumber
\end{align}

This proves the lemma.
\end{proof}

\begin{proof}[Proof of Lemma \ref{mZeroL4bound}]

Let us write $\Psi$ in place of $\Psi^m_{\ell,3}$ for similicity of
notations. We set $f=\Psi^2$ and we look for the equation that $f$
satisfies:
\begin{align}
&\cL_q^m f
\nonumber \\
&=\dfrac{1}{m}\cA f+\dfrac{1}{\sqrt{m}}\cB f
\nonumber \\
&=\dfrac{1}{m}(-p\cdot\grad_p f+\Dt_p
f)+\dfrac{1}{\sqrt{m}}(b(r)\cdot \grad_p f+p\cdot \grad_r f)
\nonumber \\
&=\dfrac{1}{m}(-p\cdot(2\Psi\grad_p\Psi)+2|\grad_p
\Psi|^2+2\Psi\Dt_p\Psi)+\dfrac{1}{\sqrt{m}}(b(r)\cdot
2\Psi\grad_p\Psi+p\cdot 2\Psi\grad_r\Psi)
\nonumber \\
&=2\Psi(\cL_q^m \Psi)+\dfrac{2}{m}|\grad_p\Psi|^2 \nonumber \ .
\end{align}

Using the equation (\ref{Eq:ErrorEquation}) we see that
\begin{align}
&\cL_q^m f=-2\sqrt{m}\Psi\cB\Psi_{\ell,2}+\dfrac{2}{m}|\grad_p
\Psi|^2 \ . \label{Eq:EquationOfPsiSquare}
\end{align}

Making use of Lemma \ref{L:L2_Bounds1} we have
\begin{align}
\langle\cL_q^m f, f\rangle_{L^2(\cY;\rho^0)}=-\dfrac{1}{m}\|\grad_p
f\|^2_{L^2(\cY;\rho^0)}+\dfrac{1}{2\sqrt{m}}\langle h(r)\cdot p,
f^2\rangle_{L^2(\cY;\rho^0)} \ . \nonumber
\end{align}

This gives
\begin{align}
\|\grad_p f\|^2_{L^2(\cY;\rho^0)}=\dfrac{\sqrt{m}}{2}\langle
h(r)\cdot p, f^2\rangle_{L^2(\cY;\rho^0)}-m \langle\cL_q^m f,
f\rangle_{L^2(\cY;\rho^0)} \ . \label{Eq:GradientOfPsiSquare}
\end{align}

Making use of (\ref{Eq:EquationOfPsiSquare}),
(\ref{Eq:GradientOfPsiSquare}), the fact that $f\geq 0$ and Lemma
\ref{L:L2_Bounds3}  we get, for some constant $K>0$ independent of $m$ that may
vary from line to line,
\begin{align}
&\|\grad_p f\|^2_{L^2(\cY;\rho^0)}
\nonumber \\
&=\dfrac{\sqrt{m}}{2}\langle h(r)\cdot p,
f^2\rangle_{L^2(\cY;\rho^0)}+2m^{3/2}\langle \Psi \cB\Psi_{\ell,2},
f\rangle_{L^2(\cY;\rho^0)}-2\langle|\grad_p\Psi|^2,
f\rangle_{L^2(\cY;\rho^0)}
\nonumber \\
&\leq \dfrac{\sqrt{m}}{2}\langle h(r)\cdot p,
f^2\rangle_{L^2(\cY;\rho^0)}+2m^{3/2}\langle \Psi\cB\Psi_{\ell,2},
f\rangle_{L^2(\cY;\rho^0)}
\nonumber \\
&\leq \dfrac{\sqrt{m}}{2}K\|f\|_{L^2(\cY;\rho^0)}\|\grad_p
f\|_{L^2(\cY;\rho^0)}+m^{3/2}(\|\Psi\cB\Psi_{\ell,2}\|_{L^2(\cY;\rho^0)}^2+\|f\|_{L^2(\cY;\rho^0)}^2)
\nonumber \\
&\leq \dfrac{\sqrt{m}}{2}K\|f\|_{L^2(\cY;\rho^0)}\|\grad_p
f\|_{L^2(\cY;\rho^0)}+m^{3/2}(\|\Psi^2\|_{L^2(\cY;\rho^0)}^2\|(\cB\Psi_{\ell,2})^2\|_{L^2(\cY;\rho^0)}^2+\|f\|_{L^2(\cY;\rho^0)}^2)
\nonumber \\
&\leq K[\dfrac{\sqrt{m}}{2}\|f\|_{L^2(\cY;\rho^0)}\|\grad_p
f\|_{L^2(\cY;\rho^0)}+m^{3/2}\|f\|_{L^2(\cY;\rho^0)}^2] \ .
\label{Eq:CrucialBound}
\end{align}

Now we apply Lemma \ref{mZeroPoincareInequality} and we see that for
some $\kp>0$ we have
\begin{align}
&\|f\|_{L^2(\cY;\rho^0)}
\nonumber \\
&=\left\|f-\int_{\cY}f(p,r)\rho^0(p,r)dpdr+\int_{\cY}f(p,r)\rho^0(p,r)dpdr\right\|_{L^2(\cY;\rho^0)}
\nonumber \\
&\leq
\left\|f-\int_{\cY}f(p,r)\rho^0(p,r)dpdr\right\|_{L^2(\cY;\rho^0)}+\left|\int_{\cY}f(p,r)\rho^0(p,r)dpdr\right|
\nonumber \\
&\leq \kp \|\grad_p
f\|_{L^2(\cY;\rho^0)}+\|\Psi\|^2_{L^2(\cY;\rho^0)} \ . \nonumber
\end{align}

In the last step we used the fact that $f=\Psi^2$. Now we apply
Lemma \ref{mZeroL2bound1} and we see that
$\|\Psi\|_{L^2(\cY;\rho^0)}^2\leq Km^3$ for some constant $K>0$
independent of $m$. Thus we see that
\begin{align}
\|f\|_{L^2(\cY;\rho^0)}\leq K[\|\grad_p f\|_{L^2(\cY;\rho^0)}+m^3]
\label{Eq:noncenteredPoincareWithCorrection}
\end{align}

Combining (\ref{Eq:CrucialBound}) and
(\ref{Eq:noncenteredPoincareWithCorrection}) we see that
\begin{align}
\|\grad_p f\|^2_{L^2(\cY;\rho^0)}\leq K[\sqrt{m}\|\grad_p
f\|_{L^2(\cY;\rho^0)}^2+m^3\|\grad_p
f\|_{L^2(\cY;\rho^0)}+m^{3/2}\|\grad_p
f\|_{L^2(\cY;\rho^0)}^2+m^{3/2+6}] \ . \nonumber
\end{align}

This gives $\lim\li_{m\ra 0}\|\grad_p f\|^2_{L^2(\cY;\rho^0)}=0$.
Apply (\ref{Eq:noncenteredPoincareWithCorrection}) again we see that
the claim of the Lemma follows.
\end{proof}

\begin{proof}[Proof of Lemma \ref{mNonZeroIntegrationByParts}]
The proof of this lemma is completely analogous to that of Lemma \ref{L:L2_Bounds1} and thus it is omitted.
%In order to prove (\ref{Eq:RelationToShow1a2}), let us first prove
%the following useful identity. Consider two functions $f,g$ that
%belong in the domain of definition of $\cL^{m}$. Then, we have
%\begin{align}
%&\int_{\mathcal{Y}}\left(\cL^{m}_{q}f(p,r)\right) g(p,r) \rho^{m}(p,r)dpdr=\int_{\mathcal{Y}}f(p,r) \left(\cL^{m}_{q}\right)^{*}\left(g(p,r) \rho^{m}(p,r)\right)dpdr\nonumber\\
%&=\int_{\mathcal{Y}}f(p,r) \left[\left(\cL^{m}_{q}\right)^{*}\left(\rho^{m}(p,r)\right)g(p,r)+ \left(-\cL^{m}_{q}g(p,r)\right)\rho^{m}(p,r)\right]dpdr\nonumber\\
%&\quad+\frac{2}{m}\int_{\mathcal{Y}}f(p,r) \left[\left(\rho^{m}(p,r)\alpha(q,r):\nabla^{2}_{p}g(p,r)+\nabla_{p}\rho^{m}(p,r)\cdot\alpha(q,r)\nabla_{p}g(p,r)\right)\right]dpdr\nonumber\\
%&=\int_{\mathcal{Y}}f(p,r) \left(-\cL^{m}_{q}g(p,r)\right)\rho^{m}(p,r)dpdr\nonumber\\
%&\quad+\frac{2}{m}\int_{\mathcal{Y}}f(p,r)\left(\rho^{m}(p,r)\alpha(q,r):\nabla^{2}_{p}g(p,r)+\nabla_{p}\rho^{m}(p,r)\cdot\alpha(q,r)\nabla_{p}g(p,r)\right)dpdr\nonumber\\
%&=\int_{\mathcal{Y}}f(p,r)
%\left(-\cL^{m}_{q}g(p,r)\right)\rho^{m}(p,r)dpdr-\frac{2}{m}\int_{\mathcal{Y}}\left[\nabla_{p}f(p,r)\cdot
%\alpha(q,r) \nabla_{p}g(p,r)\right]\rho^{m}(p,r)dpdr \ ,\nonumber
%\end{align}
%where integration by parts was used to derive the last line. Hence
%we obtain the identity (\ref{Eq:mNonZeroIntegrationByParts}).
\end{proof}


\begin{thebibliography}{100}


\bibitem{BakeryGentilLedoux}
Bakery, D., Gentil, I., Ledoux, M., \textit{Analysis and Geometry of
Markov diffusion operators}. Grundlehren der mathematischen
Wissenschaften \textbf{348}. Springer 2014.

\bibitem {Benabou2006}
G. Benabou.
\newblock Homogenization of Ornstein-Uhlenbeck process in random environment.
\newblock \emph{Communications in Mathematical Physics}, 266, (2006), pp. 699-714.

\bibitem{BoueDupuis}
M. Bou\'{e} and P. Dupuis, A variational representation for certain
functionals of Brownian motion, \textit{Annals of Probability}, Vol.
26, Issue 4, (1998), pp. 1641-1659.

\bibitem {ChenFreidlin2005}
Z. Chen and M. Freidlin,
\newblock Smoluchowski-Kramers approximation and exit problems.
\newblock \emph{Stochastics and Dynamics}, 5(4), (2005), pp. 569-585.


\bibitem{DolbeaultMouhotSchmseiser2015}
J. Dolbeault, C. Mouhot and C. Schmeiser, Hypocoercivity for linear kinetic equations conserving mass,
\emph{Transactions of the American Mathematical Society}, 367(6), (2015), pp. 3807-3828.

\bibitem {DupuisEllis}P. Dupuis and R.S. Ellis, \textit{A Weak Conergence
Approach to the Theory of Large Deviations}, John Wiley \& Sons, New
York, 1997.


\bibitem{DupuisSpiliopoulos}
P. Dupuis and K. Spiliopoulos.
\newblock Large deviations for multiscale problems via weak convergence methods,
\newblock \emph{Stochastic Processes and their Applications}, 122, (2012), pp. 1947-1987.


\bibitem {DupuisSpiliopoulosWang}
P. Dupuis, K. Spiliopoulos and H. Wang.
\newblock Importance sampling for multiscale diffusions,
\newblock \emph{SIAM  Multiscale Modeling and Simulation}, Vol. 12, No. 1, (2012), pp. 1-27.

\bibitem {DupuisSpiliopoulosWang2}
P. Dupuis, K. Spiliopoulos and H. Wang.
\newblock Rare Event Simulation in Rough Energy Landscapes,
\newblock \emph{Winter Simulation Conference},  (IEEE, 2011), pp. 504-–515. .


\bibitem{EckmannHairer}
Eckmann, J--P., Hairer, M., Non--equilibrium statistical mechanics
of strongly anharmonic chains of oscillators. \textit{Communications
in Mathematical Physics}, \textbf{212}, pp. 105--164 (2000).

\bibitem{Einstein} Einstein, A., \"{U}ber die von der molekularkinetischen Theorie der
W\"{a}rme geforderte Bewegung
von in ruhenden Fl\"{u}ssigkeiten suspendierten Teilchen, \textit{Annalen der Physik}, \textbf{322} (8),
pp. 549-–560.

\bibitem{Freidlin SK approximation} Freidlin, M., Some Remarks on the Smoluchowski-Kramers
Approximation, \textit{Journal of Statistical Physics},
\textbf{117}, No. 314, pp.617--634, 2004.


\bibitem{HuFreidlin2011}
M. Freidlin and W. Hu, Smoluchowski-Kramers approximation in the
case of variable friction, {\em Journal of Mathematical Sciences},
Vol. 79, Issue 1, (2011), pp 184-207.


\bibitem {HairerPavliotis2004}
M. Hairer and G. Pavliotis,  Periodic Homogenization for
Hypoelliptic Diffusions,{\em  J. Stat. Phys.} Vol. 117, No. 1/2,
(2004), pp. 261-279.

\bibitem{HottovyMcDanielVopeWehr2014}
S. Hottovy, A. McDaniel,G. Volpe, J. Wehr,  The Smoluchowski-Kramers limit of stochastic
          differential equations with arbitrary state-dependent friction, {\em Communications in  Mathematical Physics}, DOI:
          10.1007/s00220-014-2233-4, (2014)


\bibitem {KomorowskiLandimOlla2012}T. Komorowski, C. Landim and S. Olla,
\emph{Fluctuations in Markov Processes: Time Symmetry and Martingale
Approximation}, Springer, 2012.


\bibitem {KosyginaRezakhanlouVaradhan}
E. Kosygina, F. Rezakhanlou and S. R. S. Varadhan, Stochastic
Homogenization of Hamilton-Jacobi-Bellman Equations,
\emph{Communications on Pure and Applied Mathematics}, Vol. LIX,
(2006), pp.0001--0033.

\bibitem{Langevin} P. Langevin, Sur la th\'{e}orie du mouvement brownien.
  \textit{C. R. Acad. Sci. (Paris)}, \textbf{146}, 1908, pp. 530--533.

\bibitem {LifsonJackson}S. Lifson and J.L. Jackson, On the self-diffusion of ions
in a polyelectrolyte solution, \textit{Journal of Chemical Physics}, Vol. 36,
(1962), pp. 2410-2414.

\bibitem{LelievreStoltzRousset2010} T. Lelievre, G. Stoltz and M. Rousset, \emph{ Free Energy Computations: A Mathematical Perspective},  London College Press, 2010.

\bibitem {LionsSouganidis2006}
P.-L. Lions and P.E. Souganidis, Homogenization of "viscous"
Hamilton-Jacobi-Bellman equations in stationary ergodic media,
\emph{Communications in Partial Differential Equations} 30:3,
(2006), pp. 335-375.

\bibitem{Lunardi} Lunardi, A., On the Ornstein--Uhlenbeck operator
in $L^2$ spaces with respect to invariant measure,
\textit{Transactions of the AMS}, \textbf{349}, 1, 1997, pp.
155--169.

\bibitem{Oksendal} B. {\O}ksendal, \emph{Stochastic differential equations: an introduction with applications}, fifth edition, 2003, Springer.

\bibitem {PapanicolaouVaradhan1985}
G. Papanicolaou and S.R.S. Varadhan,
\newblock Ornstein-Uhlebeck process in random potential,
\newblock \emph{Communications in Pure and Applied Mathematics}, 38, (1985), pp. 819-834.

\bibitem{PavliotisStuart}
G.A. Pavliotis and A.M. Stuart, Periodic homogenization for inertial particles, \emph{Physica D}, Vol. 204,  (2005), pp. 161-187.

\bibitem{Smoluchowski} Smoluchowski, M.,
Drei Vortrage \"{u}ber Diffusion Brownsche Bewegung and Koagulation
von Kolloidteilchen. \textit{Phys. Z.} 17, pp. 557–-585 (1916).

\bibitem {Spiliopoulos2013a}
K. Spiliopoulos,
\newblock Large deviations and importance sampling for systems of slow-fast motion,
  \newblock \emph{Applied Mathematics and Optimization,}
\newblock Vol 67,  (2013), pp. 123-161.

\bibitem {Spiliopoulos2014a}
K. Spiliopoulos,
 Quenched Large Deviations for Multiscale Diffusion Processes in Random Environments,
\emph{Electronic Journal of Probability}, Vol. 20,  no. 15, (2015), pp. 1–-29.


\bibitem {Spiliopoulos2014b}
K. Spiliopoulos,
\newblock Rare event simulation for multiscale diffusions in random environments,
 \newblock \emph{SIAM Multiscale Modeling and Simulation,}
\newblock  Vol. 13, No. 4, (2015), pp. 1290--1311.



\bibitem{Villani2006}
C. Villani, Hypocoercivity, 2006.

\bibitem{Zhikov-Kozlov-Oleinik-Ngoan} V.V.Zhikov, S.M.Kozlov, O.A.Oleinik, and Kha T'en Ngoan,
Averaging and G-convergence of differential operators,
\textit{Russian Math Surveys}, \textbf{34}, 5, pp.65-133, 1979.


\bibitem {Zwanzig}R. Zwanzig, Diffusion in a rough potential, \textit{Proc.
Natl. Acad. Sci.} USA, Vol. 85, (1988), pp. 2029-2030.



\end{thebibliography}
\end{document}